\DeclareSymbolFont{AMSb}{U}{msb}{m}{n}
\definecolor{cite}{HTML}{0851A6}
\definecolor{url}{HTML}{0851A6}
\definecolor{link}{HTML}{8F0C00}
\DeclareMathAlphabet{\eur}{U}{zeus}{m}{n}
\newcommand{\matheur}[1]{\eur{#1}}
\theoremstyle{plain}
\newtheorem{prop}[subsubsection]{Proposition}
\newtheorem{lem}[subsubsection]{Lemma}
\newtheorem{cor}[subsubsection]{Corollary}
\newtheorem{thm}[subsubsection]{Theorem}
\newtheorem*{thm*}{Theorem}
\theoremstyle{definition}
\newtheorem{defn}[subsubsection]{Definition}
\theoremstyle{remark}
\newtheorem{rmk}[subsubsection]{Remark}
\newtheorem{expl}[subsubsection]{Example}
\newcommand{\teq}{\addtocounter{subsubsection}{1}\tag{\thesubsubsection}}
\newcommand{\all}{\mathrm{all}}
\newcommand{\Cat}{\matheur{C}\mathrm{at}}
\DeclareMathOperator{\Conf}{Conf}
\DeclareMathOperator*{\colim}{colim}
\newcommand{\ch}{\mathrm{ch}}
\DeclareMathOperator{\ComCoAlg}{ComCoAlg}
\newcommand{\ComCoAlgch}{\ComCoAlg^\ch}
\newcommand{\ComCoAlgstar}{\ComCoAlg^\star}
\newcommand{\cont}{\mathrm{cont}}
\DeclareMathOperator{\Corr}{Corr}
\DeclareMathOperator{\CorrPreStkSchAll}{\Corr(\PreStk)_{\sch;\all}}
\newcommand{\DGCat}{\mathrm{DGCat}}
\newcommand{\DGCatpres}{\DGCat_\pres}
\newcommand{\DGCatprescont}{\DGCat_{\pres, \cont}}
\DeclareMathOperator{\diag}{diag}
\newcommand{\disj}{\mathrm{disj}}
\newcommand{\Disk}{\mathrm{Disk}}
\DeclareMathOperator{\ev}{ev}
\newcommand{\et}{\mathrm{\acute{e}t}}
\newcommand{\etale}{\'etale}
\DeclareMathOperator{\Fact}{Fact}
\newcommand{\Fq}{\mathbb{F}_q}
\newcommand{\Fqbar}{\lbar{\mathbb{F}}_q}
\DeclareMathOperator{\Free}{Free}
\newcommand{\fSet}{\mathrm{fSet}}
\DeclareMathOperator{\Fun}{Fun}
\DeclareMathOperator{\Gal}{Gal}
\newcommand{\gr}{\mathrm{gr}}
\newcommand{\id}{\mathrm{id}}
\newcommand{\iso}{\mathrm{iso}}
\DeclareMathOperator{\ins}{ins}
\newcommand{\Kunneth}{K\"unneth}
\DeclareMathOperator{\Lie}{Lie}
\DeclareMathOperator{\Liech}{\Lie^\ch}
\DeclareMathOperator{\Liestar}{\Lie^\star}
\DeclareMathOperator{\LKE}{LKE}
\newcommand{\oblv}{\mathrm{oblv}}
\newcommand{\op}{\mathrm{op}}
\newcommand{\otimesch}{\otimes^\ch}
\newcommand{\otimesshriek}{\overset{!}{\otimes}}
\newcommand{\otimesstar}{\otimes^\star}
\DeclareMathOperator{\PConf}{PConf}
\newcommand{\pres}{\mathrm{pres}}
\newcommand{\PreStk}{\mathrm{PreStk}}
\newcommand{\Prim}{\mathrm{Prim}}
\newcommand{\proper}{\mathrm{proper}}
\newcommand{\qandq}{\qquad\text{and}\qquad}
\newcommand{\Ql}{\mathbb{Q}_\ell}
\newcommand{\Qlbar}{\lbar{\mathbb{Q}}_\ell}
\DeclareMathOperator{\Ran}{Ran}
\newcommand{\res}{\mathrm{res}}
\DeclareMathOperator{\Sym}{Sym}
\newcommand{\sch}{\mathrm{sch}}
\newcommand{\Sch}{\mathrm{Sch}}
\DeclareMathOperator{\Spec}{Spec}
\newcommand{\surjects}{\twoheadrightarrow}
\DeclareMathOperator{\Shv}{Shv}
\newcommand{\sm}{\mathrm{sm}}
\newcommand{\Spc}{\mathrm{Spc}}
\newcommand{\sqto}{\leadsto}
\DeclareMathOperator{\Supp}{Supp}
\newcommand{\surj}{\mathrm{surj}}
\newcommand{\union}{\mathrm{union}}
\newcommand{\Vect}{\mathrm{Vect}}
\newcommand{\lbar}[1]{\overline{#1}}
\newcommand{\oversetsupscript}[3]{\overset{#2}{#1}{}^{#3}}
\newcommand{\arrdis}{0.4ex} 
\title[Free factorization algebras in algebraic geometry]{Free factorization algebras and homology of configuration spaces \\ in algebraic geometry}
\author{Q.P. H\`{\^o}}
\date{\today}
\keywords{Chiral algebras, chiral homology, factorization algebras, Koszul duality, homological stability, configuration spaces.}
\subjclass[2010]{Primary 81R99. Secondary 18G55.}
\begin{document}
\maketitle

\begin{abstract}
	We provide a construction of free factorization algebras in algebraic geometry and link factorization homology of a scheme with coefficients in a free factorization algebra to the homology of its (unordered) configuration spaces. As an application, this construction allows for a purely algebro-geometric proof of homological stability of configuration spaces.
\end{abstract}

\tableofcontents

\section{Introduction}
The theory of factorization algebras has its root in vertex algebras and was first formulated in the beautiful language of algebraic geometry in the case of curves by Beilinson and Drinfel'd in~\cite{beilinson_chiral_2004}. Further developments to generalize these results to higher dimensional schemes required many ideas and techniques of a homotopical nature, and were carried out by Francis and Gaitsgory in~\cite{francis_chiral_2011} several years later. These advances have many applications in the Geometric Langlands program, and very recently, have culminated in the proof of Weil's conjecture on the Tamagawa number~\cites{gaitsgory_atiyah-bott_2015,gaitsgory_weils_2014}.

A topological version of the theory, known as topological factorization algebras/homology and $E_n$-algebras, was developed by Lurie in~\cite{lurie_higher_2016}. This was further developed by Ayala and Francis in~\cites{ayala_factorization_2012,ayala_poincare/koszul_2014}. The theory of topological factorization homology provides an efficient tool for attacking classical questions related to the stability of homology groups of a family of spaces (see, for example,~\cites{knudsen_betti_2014,kupers_homological_2013,kupers_$e_n$-cell_2014}). In some sense, this is not surprising, since one can see various hints from previous work. One example is the appearance of a Lie algebra structure when one studies the homology groups of configuration spaces (see, for example,~\cites{totaro_configuration_1996, getzler_resolving_1999}). In the setting of factorization algebras, this could be conceptually understood as an instance of the chiral Koszul duality theory developed in~\cite{francis_chiral_2011}, one of the main tools employed in the current paper.

Starting from~\cite{beilinson_chiral_2004}, most of the existing literature, with the exception of~\cites{gaitsgory_atiyah-bott_2015,gaitsgory_weils_2014}, works exclusively with $D$-modules. In this setting, however, one cannot construct free factorization algebras, due to, as we shall see, the lack of the proper-pushforward functor in general. Free factorization algebras are nonetheless one of the most basic objects in the topological setting, and they are used extensively, for example, in~\cites{knudsen_betti_2014,kupers_$e_n$-cell_2014}. 

Working in the context of constructible sheaves on schemes with values in vector spaces over a field of characteristic 0 (we refer the reader to \S\ref{subsec:sheaves_on_schemes} for what we mean by sheaves), this paper provides a construction of free factorization algebras (or $E_X$-algebras, where $X$ is a scheme). As a result, we provide a mechanism to translate all the techniques and results found in~\cite{knudsen_betti_2014} to a purely algebraic setting. In fact, the main result~\cite[Thm. 1.1]{knudsen_betti_2014} could be viewed as the special case of constant sheaves in our setting. Moreover, since our method is internal to the world of algebraic geometry, the main results are automatically compatible with Galois action from the base field.

One might wonder if it is possible to translate methods in~\cites{kupers_homological_2013,kupers_$e_n$-cell_2014} into algebraic geometry. We intend to return to this question in a future work.

\subsection{Factorization algebras} Let us offer an intuitive picture of the main object of this paper: factorization algebras. The reader should note that we only intend to give an impressionistic outline without the technical baggage. 

\subsubsection{Topological formulation} The topological avatar of factorization algebras is $E_n$-algebras, which has a very nice geometric interpretation. When $n=1$ and $n=\infty$, up to homotopy coherence, these are associative and commutative algebras respectively. How does multiplication work in these classical objects? Suppose we have two elements $a, b\in \matheur{A}$ where $\matheur{A}$ is a classical algebra. Then essentially, there are two ways to multiply them: $ab$ and $ba$---either $a$ is on the left of $b$ or vice versa. In other words, there are essentially two relative positions between $a$ and $b$, and for each relative position, we have a way to multiply.

Now, let $\matheur{A}$ be an $E_n$-algebra, and let $a, b \in \matheur{A}$. To multiply $a$ and $b$, we have to put them on $\mathbb{R}^n$ and for each relative position, we have a way of multiplying. So, the multiplication map is not of the form
\[
	\matheur{A}^2 \to \matheur{A}
\]
anymore. Instead, we need to keep track of the relative position and get a map of the form
\[
	\PConf_2 \mathbb{R}^n \times \matheur{A}^2 \to \matheur{A}.
\]
And suppose we need to multiply $k$ elements, we need to have the following map
\[
	\PConf_k \mathbb{R}^n \times \matheur{A}^k \to \matheur{A}.
\]
All these maps have to satisfy several coherence properties that we will not spell out here.

Here, 
\[
	\PConf_k \mathbb{R}^n = \oversetsupscript{(\mathbb{R}^n)}{\circ}{k}
\]
denotes the \emph{ordered} configuration space of $k$ distinct points on $\mathbb{R}^n$, where for any space $X$, $\oversetsupscript{X}{\circ}{k}$ denotes the open complement of the fat diagonal of $X^k$.

\subsubsection{Free $E_n$-algebras} In the case of associative algebras, suppose we start with an object $V$. Then to create a free associative algebra object, we formally add all the possible ways to multiply and get something of the form $\bigsqcup_n V^n$. The idea is similar for $E_n$-algebras. To create a free $E_n$-algebra out of $V$, we formally add all the possible ways to multiply, which includes all relative positions, and get something of the form
\[
	\bigsqcup_{k\geq 1} (\PConf_k \mathbb{R}^n \times V^k)/\Sigma_k,
\]
where the symmetric group on $k$ letters, $\Sigma_k$, acts diagonally.

Suppose we are working in the category of chain complexes over a field $F$. Then one can show that the free $E_n$-algebra generated by a chain complex $V$ is given by (see~\cite{ayala_factorization_2012})
\[
	\bigoplus_{k\geq 1} (C_*(\PConf_k \mathbb{R}^n, F) \otimes_F V^{\otimes k})_{\Sigma_k}.
\]

Such calculations could be globalized via the theory of factorization homology. Using this, one can prove, for instance, that
\[
	\int_M \Free_{E_n}(F) \simeq \bigoplus_{k\geq 1} C_*(\Conf_k M, F),
\]
where $M$ is an $n$ dimensional manifold, and the left hand side denotes the factorization homology of $M$ with coefficients in the free $E_n$-algebra generated by $F$. This is the starting point of~\cite{knudsen_betti_2014}.

\subsubsection{Sheaf theoretic formulation} The multiplication operation within an $E_n$-algebra could be captured using the language of sheaves on a space/scheme $X$. However, the situation is inherently global in this setting: instead of having $k$ points on $\mathbb{R}^n$, these points will be on $X$. We will thus call them $E_X$-algebras.

Let $\matheur{F} \in \Shv(X)$ be a sheaf on $X$. Then the operation of multiplying 2 elements (\`a la $E_n$-algebras) could be viewed as a map of sheaves
\[
	m: i^! j_! j^! \matheur{F}^{\boxtimes 2} \to \matheur{F}, \teq\label{eq:mult_map}
\]
where $i$ is the diagonal map and $j$ is the open complement as follows
\[
\xymatrix{
	X^2 \backslash X \ar[r]^{j} & X^2 & \ar[l]_{i} X.
}
\]
For instance, when $\matheur{F}$ is a constant sheaf on $\mathbb{R}^n$ then the homology of the configuration space of 2 points on $\mathbb{R}^n$ appears as the costalk at a point of the left hand side of~\eqref{eq:mult_map}. 

One can use the multiplication map~\eqref{eq:mult_map} to construct a sheaf $\matheur{F}^{(2)} \in \Shv(X^2)$ such that
\[
	i^! \matheur{F}^{(2)} \simeq \matheur{F} \qquad\text{and}\qquad j^! \matheur{F}^{(2)} \simeq j^! \matheur{F}^{\boxtimes 2}.
\]
As in the case of $E_n$-algebras, one also has to consider $X^n$ for all $n\geq 1$, and a similar argument would give a sequence of sheaves $\matheur{F}^{(n)} \in \Shv(X^n)$ which satisfy similar compatibility conditions as above. The construction of free $E_X$-algebras would then involve formally taking direct sums of sheaves appearing on the left hand side of~\eqref{eq:mult_map} (for all $n\geq 1$), and then gluing them together in a proper way. 

We will not directly follow this route, as we would very quickly run into a huge combinatorial mess. Instead, we will reformulate the construction of free $E_n$-algebras in a more categorical manner (see \S\ref{subsec:quick_review_E_n}), which allows us to adapt to the sheaf theoretic setting. We will then use the theory originally developed in~\cite{beilinson_chiral_2004} and~\cite{francis_chiral_2011} to gracefully handle the combinatorial complexity that arises.

\subsection{Summary of main results} We will now give a summary of the main results. The precise statements of these results will be given in the body of the paper.

As mentioned above, the main goal of this paper is to provide a construction of free factorization algebras in the context of constructible sheaves in algebraic geometry. Let $X$ be a scheme. Then we denote by $\Fact(X)$ the category of factorization algebras over $X$ (see \S\ref{sec:E_X_algebras} for the definition). As in the topological setting, there's a natural forgetful functor
\[
	\delta^!: \Fact(X) \to \Shv(X).
\]

Let
\[
	T: \Shv(X) \to \Shv(\Conf X)
\]
denote the functor that acts on sheaves by tensoring up (see \S\ref{subsubsec:1st_stage_T_Free_E_X}), and let
\[
	g_!: \Shv(\Conf X) \to \Shv(\Ran X)
\]
be the pushforward along the natural map (see \S\ref{subsubsec:various_spaces})
\[
	g: \Conf X \to \Ran X.
\]

The main result of this paper is the following
\begin{thm}[Theorem~\ref{thm:Free_E_X} \& Proposition~\ref{prop:2_ways_E_X}] \label{thm:intro:Free_E_X}
	We have a pair of adjoint functors
	\[
		\Free_{E_X}: \Shv(X) \rightleftarrows \Fact(X): \delta^!.
	\]
	where $\Free_{E_X} = g_!\circ T$.
\end{thm}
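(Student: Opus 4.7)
The plan is to go through the chiral Koszul duality framework of Francis--Gaitsgory (presumably recalled earlier in the paper). The underlying idea is that factorization algebras on $X$ are equivalent to commutative chiral (co)algebras on $\Ran X$, and the desired left adjoint should be the free chiral commutative (co)algebra---an infinite direct sum of chiral symmetric powers---which I will identify with $g_!\circ T$.

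First, I would recall the chiral symmetric monoidal structure $\otimesch$ on $\Shv(\Ran X)$, in which the product of sheaves supported on $X^m$ and $X^n$ is built by restricting $\boxtimes$ to the disjoint locus inside $X^{m+n}$ and pushing forward along the union map to $\Ran X$. Under this structure, the $n$-th chiral symmetric power of a sheaf $\matheur{F}\in\Shv(X)$ (viewed on $\Ran X$ via the tautological embedding of the $1$-point stratum) is precisely the $\Sigma_n$-coinvariants of $\matheur{F}^{\boxtimes n}$ restricted to $\oversetsupscript{X}{\circ}{n}$ and then pushed forward to $\Ran X$. Summing over $n$, this is exactly $g_!\circ T(\matheur{F})$ by unwinding the definitions of $T$ and $g$.

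Second, I would construct the adjunction in two stages. The forgetful $\delta^!$ factors as the forgetful from commutative chiral (co)algebras down to $\Shv(\Ran X)$, followed by $!$-restriction to the $1$-point stratum $X\subset \Ran X$. Each of these admits a left adjoint: for the first, one takes the chiral symmetric algebra $\bigoplus_n \Sym^n_\ch(-)$; for the second, a form of $!$- or $*$-extension from the $X$-component. Composing and simplifying via step 1 identifies the total left adjoint with $g_!\circ T$, and the adjunction follows formally. This two-stage description should also account for the alternative formulation recorded as Proposition~\ref{prop:2_ways_E_X}.

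The main obstacle, and where the genuine content lies, is verifying that $g_!\circ T(\matheur{F})$ actually carries a factorization structure---equivalently, that the chiral symmetric algebra construction produces an object that descends through the equivalence with $\Fact(X)$. Unlike in the topological setting where the little-disks operad directly provides the needed operations, here the factorization isomorphisms over disjoint loci must be produced from a careful combinatorial analysis of $g$ and the union maps on $\Ran X$. Rather than doing this by hand, I would lean on the Francis--Gaitsgory equivalence to transport the formal commutative chiral (co)algebra structure of $\bigoplus_n \Sym^n_\ch(\matheur{F})$ to an honest factorization structure; this is precisely the point at which chiral Koszul duality is indispensable and replaces the operadic arguments available in topology.
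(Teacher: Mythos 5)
Your identification of $g_!\circ T(\matheur{F})$ with the direct sum of chiral symmetric powers of the extension of $\matheur{F}$ to $\Ran X$ is correct at the level of objects, and you have correctly located where the content lies (the factorizability of $g_!T\matheur{F}$). However, the way you propose to build the adjunction has a genuine gap. You factor $\delta^!$ as ($!$-restriction to $X$) composed with (forgetful from commutative chiral coalgebras to $\Shv(\Ran X)$) and assert that the latter admits a left adjoint given by $\bigoplus_n\Sym^{\ch,n}(-)$. But the objects of $\Fact(X)$ are commutative \emph{co}algebras, and for coalgebras the symmetric construction is the \emph{cofree} (right adjoint) functor, not a free left adjoint; the forgetful functor out of coalgebras preserves colimits rather than limits, so there is no reason for a left adjoint of this shape to exist, and even if one did, $\Fact(\Ran X)$ is only a full subcategory of $\ComCoAlgch(\Ran X)$, so the output would still have to be checked to be factorizable. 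The paper's route avoids this variance problem entirely: the first stage is not a free construction but an \emph{equivalence} $T:\Shv(X)\simeq\Fact(\Conf X)$ with inverse $\delta^!$ (factorizability of $T\matheur{F}$ over $\Conf X$ is immediate from the construction, since $\union''^!(T\matheur{F})\simeq j''^!((T\matheur{F})^{\boxtimes n})$ by inspection), and all of the ``freeness'' is carried by the second stage $g_!\dashv g^!$, which exists for formal reasons and is upgraded to the factorization categories by showing that $g^!$ is symmetric monoidal for the chiral structures (Theorem~\ref{thm:adjunction_coalgebras_g^!_g_!}, via the commutative-monoid structures in the correspondence category) and that $g_!$ preserves factorizability via the \Kunneth{} lemmas and base change (see \S\ref{subsec:factorizable_sheaves}).

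Your fallback---outsourcing the factorization structure to chiral Koszul duality---does produce \emph{a} left adjoint to $\delta^!$, namely $C^\ch\circ U^\ch\circ\delta_*\circ\Free_{\Lie}\circ[-1]$, since $C^\ch$ carries $\Liech(X)$ into $\Fact(X)$ by Francis--Gaitsgory. But the statement also asserts the formula $\Free_{E_X}=g_!\circ T$, and identifying the Koszul-dual left adjoint with $g_!\circ T$ is exactly Proposition~\ref{prop:2_ways_E_X}, whose proof (uniqueness of left adjoints in a commuting diagram of right adjoints) presupposes that $g_!\circ T$ already lands in $\Fact(\Ran X)$ and is left adjoint to $\delta^!$ there. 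So the Koszul-duality detour does not discharge the verification you flagged: either it is circular, or it forces you to compute the chiral Chevalley complex of the free Lie algebra explicitly and match it, cardinality grading against Lie grading, with $g_!T\matheur{F}$---which is the content of Theorem~\ref{thm:compatibility_of_gradings} and is not automatic.
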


Note that part of the content of this theorem is the fact that $g_!\circ T$ actually factors through $\Fact(X)$, since a priori, the target of this functor is just $\Shv(\Ran X)$. In fact, a large part of this paper is used to establish technical results needed for this verification.

We will also provide a second construction of free factorization algebras via free Lie algebras (see Proposition~\ref{prop:2_ways_E_X}), where we make use of~\cite{francis_chiral_2011} to do the heavy lifting. These two constructions come with natural gradings: the first one comes from the cardinality of configuration, and the second from powers of the Lie generators, which will be called the Lie grading. We will show that

\begin{thm}[Theorem~\ref{thm:compatibility_of_gradings}] \label{thm:intro:compatibility_of_gradings}
	These two gradings are the same.
\end{thm}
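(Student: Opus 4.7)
The plan is to refine both constructions to produce graded factorization algebras and then invoke the uniqueness of the graded free object. Concretely, I pass to the category $\Shv(X)^{\gr}$ of $\mathbb{N}$-graded sheaves (equivalently, $\mathbb{G}_m$-equivariant sheaves with respect to the weight decomposition) and the corresponding graded category $\Fact(X)^{\gr}$; since $\Free_{E_X}$ is a left adjoint built from operations that make sense in the graded setting, it lifts to a functor $\Shv(X)^{\gr} \to \Fact(X)^{\gr}$. Placing $\matheur{F}$ in pure weight $1$, both the cardinality grading and the Lie grading are candidates for the resulting graded structure on $\Free_{E_X}(\matheur{F})$, and the task is to show that they coincide.

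First I verify that $g_! \circ T$ naturally lifts with the cardinality grading. The functor $T$ sends $\matheur{F}$ placed in weight $1$ to the sheaf on $\Conf X$ whose restriction to $\Conf_k X / \Sigma_k$ is $(\matheur{F}^{\boxtimes k})_{\Sigma_k}$, which has total weight $k$. Since $g_!$ is computed componentwise along the stratification of $\Conf X$ by cardinality, it preserves this grading, so $g_! \circ T$ manifestly carries the cardinality grading.

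Next I verify the analogous statement for the construction via chiral Koszul duality. The free chiral Lie algebra $\Free_{\Liech}(\matheur{F})$ on a weight-$1$ generator carries its natural $\mathbb{N}$-grading by the number of generators, which is precisely the Lie grading. The Koszul duality equivalence invoked in Proposition~\ref{prop:2_ways_E_X} is implemented by bar/cobar constructions built out of $\otimesstar$ and $\otimesch$ on $\Ran X$; these monoidal structures multiply weights additively, so the weight-$k$ piece of the output consists precisely of the contributions built from $k$ Lie generators. Hence the Lie grading transports to the weight grading on the factorization algebra side.

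The conclusion then follows by uniqueness: the graded adjunction $\Shv(X)^{\gr} \rightleftarrows \Fact(X)^{\gr}$ produces a graded free $E_X$-algebra on a weight-$1$ object that is determined up to canonical equivalence by its underlying object together with its unit; both gradings exhibit $\Free_{E_X}(\matheur{F})$ as such a graded free algebra with the same unit $\matheur{F} \to \delta^! \Free_{E_X}(\matheur{F})$ sitting in weight $1$, so they must coincide. The main obstacle I anticipate is a clean justification that the Koszul duality machinery of \cite{francis_chiral_2011} respects the weight grading; this reduces to checking that each step of the relevant bar/cobar construction is compatible with the grading, which is essentially formal but requires careful bookkeeping through the Ran-space combinatorics.
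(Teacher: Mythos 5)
Your proposal is correct and follows essentially the same route as the paper: pass to the graded category $\Shv(X)_\gr$, lift both constructions, and conclude by uniqueness of the graded free object (equivalently, of the left adjoint to $\delta^!$) on a generator placed in weight $1$. The ``obstacle'' you flag about Koszul duality respecting the grading is resolved in the paper exactly by your own uniqueness argument: in the graded diagram the compositions of \emph{right} adjoints along both rows are visibly $!$-restriction to the diagonal, so the compositions of left adjoints agree without any bookkeeping through the bar/cobar construction.
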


The first grading links to the cardinality of configuration spaces, which is what we are interested in. On the other hand, the second construction provides an organizing tool to assemble all the cohomology of configuration spaces of $X$ together.

As a consequence of the two results above, we get the following 
\begin{prop}[Proposition~\ref{prop:coh_of_conf_spaces}] \label{prop:intro:coh_of_conf_spaces}
	Let $X$ be a scheme, and $\matheur{F} \in \Shv(X)$. Then, there exists a functorial quasi-isomorphism of chain complexes
	\[
		C^*_c(\Conf X, T\matheur{F}) = \bigoplus_{n\geq 1} C^*_c(\Conf_n X, (T\matheur{F})_n) \simeq C_*^{\Lie}(C^*_c(X, \Free_{\Lie}(\matheur{F}[-1]))),
	\]
	which exchanges the cardinality grading (of configuration) on the left hand side with the Lie grading on the right hand side.	
\end{prop}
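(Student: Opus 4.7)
The plan is to identify both sides of the claimed quasi-isomorphism with $C^*_c(\Ran X, \Free_{E_X}(\matheur{F}))$, by using the first construction of $\Free_{E_X}$ (from Theorem~\ref{thm:intro:Free_E_X}) for the left-hand side and the second (free-Lie) construction (from Proposition~\ref{prop:2_ways_E_X}) for the right-hand side, and then matching the gradings using Theorem~\ref{thm:intro:compatibility_of_gradings}.

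For the left-hand side, I would use functoriality of $!$-pushforward: if $p_{\Ran X}$ and $p_{\Conf X}$ denote the structure maps to a point, then $(p_{\Ran X})_! \circ g_! \simeq (p_{\Conf X})_!$, so
\[
	C^*_c(\Conf X, T\matheur{F}) \simeq C^*_c(\Ran X, g_! T\matheur{F}) \simeq C^*_c(\Ran X, \Free_{E_X}(\matheur{F})),
\]
the last identification by Theorem~\ref{thm:intro:Free_E_X}. The tautological decomposition $T\matheur{F} = \bigoplus_n (T\matheur{F})_n$, with $(T\matheur{F})_n$ supported on $\Conf_n X$, is preserved by $g_!$ and by $C^*_c$, and supplies the cardinality grading on the left.

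For the right-hand side, I would invoke Proposition~\ref{prop:2_ways_E_X}, which realizes $\Free_{E_X}(\matheur{F})$ as the factorization algebra obtained via chiral Koszul duality from $\Free_{\Lie}(\matheur{F}[-1])$ (the shift reflecting the standard Koszul shift between the Lie and $E_X$ worlds). The input I expect to borrow from the Francis--Gaitsgory chiral Koszul duality machinery of~\cite{francis_chiral_2011} is the identification of the compactly supported cohomology on $\Ran X$ of a Koszul-dual factorization algebra with the Chevalley--Eilenberg complex of the global sections of the underlying Lie algebra; concretely,
\[
	C^*_c(\Ran X, \Free_{E_X}(\matheur{F})) \simeq C^{\Lie}_* \bigl( C^*_c(X, \Free_{\Lie}(\matheur{F}[-1])) \bigr).
\]
This free-Lie construction is weight-graded by powers of the generators, and that weight grading is inherited by $C^{\Lie}_*$ piece by piece.

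It remains to match the two gradings: the cardinality grading on the left comes from $g_! T$, the Lie weight grading on the right comes from the $\Free_{\Lie}$ decomposition, and their identification is precisely Theorem~\ref{thm:intro:compatibility_of_gradings}. The main obstacle in this argument is the chiral-Koszul-duality step above: while the analogous formula is standard in the topological setting, in the algebro-geometric setting one must carefully transport the Francis--Gaitsgory machinery, correctly tracking the $[-1]$-shift and verifying that compactly supported cohomology on $\Ran X$ is the correct ``integration'' functor pairing Lie algebras with their chains. Once this is in hand, the proposition follows by assembling Theorem~\ref{thm:intro:Free_E_X}, Proposition~\ref{prop:2_ways_E_X}, and Theorem~\ref{thm:intro:compatibility_of_gradings}.
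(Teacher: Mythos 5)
Your argument is correct and follows the paper's own proof essentially verbatim: identify $C^*_c(\Conf X, T\matheur{F})$ with $C^*_c(\Ran X, \Free_{E_X}\matheur{F})$ by functoriality of $!$-pushforward, then combine Proposition~\ref{prop:2_ways_E_X}, Theorem~\ref{thm:compatibility_of_gradings}, and the Francis--Gaitsgory input, which the paper packages as Corollary~\ref{cor:chiral_homology_Chevalley_complex_Lie(X)}. The ``main obstacle'' you flag is exactly that corollary, already established earlier in the paper from~\cite[Prop. 6.3.6]{francis_chiral_2011} together with the monoidality of $C^*_c(\Ran X,-)$ for $\otimesstar$, so no gap remains.
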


Results in~\cite{knudsen_betti_2014} could now be done internally in the world of algebraic geometry. For example, the proof found in~\cite[Sect. 5.3]{knudsen_betti_2014}, which is an argument about the homology of a Lie algebra (namely, the one on the right hand side of Proposition~\ref{prop:intro:coh_of_conf_spaces}), could now be copied without any modification to yield a proof in our context. As a consequence, we get
\begin{cor}[Corollary~\ref{cor:homological_stability}] \label{cor:intro:homological_stability}
	For a connected smooth scheme $X$ of dimension $n\geq 1$, cap product with the unit in $H^0(X, \Lambda)$ induces a map
	\[
		H^*_c(\Conf_{k+1} X, \omega_{\Conf_{k+1} X}) \to H^*_c(\Conf_k X, \omega_{\Conf_k X})
	\]
	that is 
	\begin{enumerate}[\quad --]
		\item an isomorphism, for $*>-k$, and a surjection for $*=-k$, when $X$ is an algebraic curve; and
		\item an isomorphism for $*\geq -k$, and a surjection for $*=-k-1$ in all other cases.
	\end{enumerate}
\end{cor}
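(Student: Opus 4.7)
The plan is to transport Knudsen's proof of homological stability \cite[Sect.~5.3]{knudsen_betti_2014} across the equivalence in Proposition~\ref{prop:intro:coh_of_conf_spaces}. Specializing that identification to the appropriate $\matheur{F}$ (essentially $\omega_X$, possibly shifted so that the Lie generators land in the degree used by Knudsen's conventions), the graded summand $H^*_c(\Conf_k X, \omega_{\Conf_k X})$ becomes the weight-$k$ piece (in the sense of Theorem~\ref{thm:intro:compatibility_of_gradings}) of the Chevalley--Eilenberg complex
\[
C_*^{\Lie}\bigl(C^*_c(X, \Free_{\Lie}(\matheur{F}[-1]))\bigr).
\]
So the entire problem is recast as a question about Lie algebra homology in $\Vect$ over a field of characteristic zero, where the weight filtration matches the cardinality filtration.

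Next, I would identify the stability map on the Lie side. The unit $\Lambda \to R\Gamma(X,\Lambda)$ is dual, via Verdier duality, to an augmentation $C^*_c(X, \omega_X) \to \Lambda$, which in turn induces a map of free Lie algebras whose effect on Chevalley homology drops Lie weight by one. I would check—using the functoriality built into Proposition~\ref{prop:intro:coh_of_conf_spaces} and the grading compatibility of Theorem~\ref{thm:intro:compatibility_of_gradings}—that this algebraic map agrees with the geometric cap-product-with-unit map on the left-hand side. Once this identification is in place, the original question has been translated, without loss of information, into precisely the setup of Knudsen's Section~5.3.

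From that point, Knudsen's argument applies verbatim: one filters the Chevalley complex by Lie weight, examines the resulting spectral sequence, and invokes connectivity estimates for the free Lie algebra on a chain complex concentrated in a single degree. The parity of $\dim X$ controls whether the generator $\omega_X[-1]$ behaves as an even or odd element, and this is exactly what produces the two separate ranges (curves vs.\ higher-dimensional $X$) stated in the corollary. Since Knudsen's arguments are formulated abstractly inside a presentable symmetric monoidal stable $\infty$-category with $\mathbb{Q}$-linear structure, they transfer without change to our sheaf-theoretic setting. The main obstacle I anticipate is the middle step: carefully matching the cap-product stability map with the algebraic augmentation, including all degree shifts and signs, so that the numerical ranges ($*>-k$ versus $*\geq -k$, and the corresponding surjectivity statements) fall out correctly and that the curve/non-curve dichotomy is traced to the parity of $n$ in the generator's cohomological degree.
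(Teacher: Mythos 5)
Your proposal follows essentially the same route as the paper: the paper's own ``proof'' of Corollary~\ref{cor:homological_stability} consists precisely of transporting the Lie-algebra-homology argument of \cite[Sect.~5.3]{knudsen_betti_2014} across the equivalence of Proposition~\ref{prop:coh_of_conf_spaces} (together with Theorem~\ref{thm:compatibility_of_gradings} and the formality statement of Corollary~\ref{cor:coh_of_conf_spaces_formality_incorporated}), with the remark that the argument ``could now be copied without any modification.'' The step you single out as the main obstacle---matching the cap-product-with-unit map against the algebraic augmentation, with its shifts and signs---is exactly the point the paper leaves implicit, so your outline is if anything more explicit than the paper's; the only quibble is your attribution of the curve/non-curve dichotomy to the parity of the generator's degree (which is always odd, being $2d-1$), whereas it really comes from the cohomological amplitude $[0,2d]$ of $H^*_c(X)$.
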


Moreover, since our theory is developed within the world of algebraic geometry, Galois actions are already part of the output. For instance, the equivalence in Proposition~\ref{prop:intro:coh_of_conf_spaces} and the stabilizing maps in Corollary~\ref{cor:intro:homological_stability} are compatible with the Galois actions. For a more precise discussion, we refer the reader to \S\ref{sec:consequences}.

\begin{rmk}
	When $X$ is a smooth scheme over $\mathbb{Z}[1/N]$, smoothly compactifiable with normal crossing complement, Farb and Wolfson have recently proved in~\cite{farb_etale_2015} $S_n$-representation stability of the cohomology of ordered configuration spaces of $X \otimes K$ (where $K = \lbar{\mathbb{Q}}$ or $K = \Fqbar$) that is also compatible with the Galois actions. For such a scheme $X$, this result is a generalization of Corollary~\ref{cor:intro:homological_stability}. Their technique is different from ours: it makes use of the theory of FI-modules, and moreover, relies on the comparison theorem between \etale{} cohomology and singular cohomology, which is the source of the extra assumptions mentioned above.
\end{rmk}

\subsection{Contents} We will now give a quick overview of the paper section-by-section.

\S\ref{sec:sheaves_on_prestacks} collects the technical results about sheaves on prestacks we need for the actual construction. The enthusiastic reader can jump directly to the constructions in \S\ref{sec:E_X_algebras} and \S\ref{sec:alternative_construction_Lie}, and backtrack when necessary. We start with a review of the theory of sheaves on prestacks, as developed in~\cite{gaitsgory_atiyah-bott_2015}. We recall the notion of pseudo-properness of a morphism between prestacks and the accompanying base change result. We will then construct an adjoint pair $f^* \dashv f_*$, extending the usual pullback and pushforward of sheaves on schemes. After that, we will introduce the technically important notions of pseudo-Artin prestacks and pseudo-smooth morphisms between them. The accompanying base change result, which is the generalization of the smooth base change theorem, is then proved.

\S\ref{sec:E_X_algebras} and \S\ref{sec:alternative_construction_Lie} carry out the main constructions of free factorization algebras. The first construction is given in Theorem~\ref{thm:Free_E_X}, while the second one in Proposition~\ref{prop:2_ways_E_X}. The compatibility of the two gradings is established in Theorem~\ref{thm:compatibility_of_gradings}.

We start \S\ref{sec:E_X_algebras} with a review of the topological picture, which serves as the main motivation for the actual construction. Then, we provide a slight extension to the notion of the chiral monoidal structure found in~\cite{francis_chiral_2011} and use it as an organizing tool to prove various technical properties of the construction. 

\S\ref{sec:alternative_construction_Lie} provides another construction of free factorization algebras via Lie algebras, which is equivalent to the first one due to formal reasons. The outputs of each of these constructions carry natural gradings. We conclude the section by showing that these two gradings agree.

\S\ref{sec:consequences} lists various direct consequences of our construction.

\subsection{Conventions and notation}
\subsubsection{Category theory}
We use $\DGCat$ to denote the $(\infty, 1)$-category of stable infinity categories, $\DGCatpres$ to denote the full subcategory of $\DGCat$ consisting of presentable categories, and $\DGCatprescont$ to denote the (non-full) subcategory of $\DGCatpres$ where we restrict to continuous functors, i.e. those commuting with colimits. $\Spc$ will be used to denote the category of spaces, or $\infty$-groupoids. 

\subsubsection{Algebraic geometry}
Throughout this paper, $k$ will be an algebraically closed ground field. We will denote by $\Sch$ the $\infty$-category obtained from the ordinary category of separated schemes of finite type over $k$. All our schemes will be objects of $\Sch$. In most cases, we will use the calligraphic font to denote prestacks, for instance $\matheur{X}, \matheur{Y}$ etc., and the usual font to denote schemes, for instance $X, Y$ etc.

\subsection{Acknowledgment} The author would like to express his deepest gratitude to D. Gaitsgory for his tireless patience and enthusiasm in explaining many results in~\cite{gaitsgory_atiyah-bott_2015} and also many other parts of mathematics. 

The author would like to thank E. Elmanto for his constant enthusiasm, and countless stimulating conversations. 

The author is grateful to his advisor B.C. Ng\^o for many years of guidance, support and patience.

The author would like to thank B. Knudsen for many helpful comments on previous drafts of this paper. 

Finally, the author is grateful to the anonymous referee for providing many comments which greatly improve the clarity and precision of the exposition.

\section{Sheaves on prestacks} \label{sec:sheaves_on_prestacks}
The theory of sheaves on prestacks has been developed in~\cite{gaitsgory_atiyah-bott_2015} and~\cite{gaitsgory_weils_2014}. We will start this section with a brief review of this theory, which includes the definition of the category of sheaves on a prestack, as well as the existence and properties of the two functors $f_!$ and $f^!$. After that, we will define two new functors $f^*$ and $f_*$ for a special class of morphisms $f$ between prestacks. Finally, we prove various base change type results, and then conclude with a brief discussion on how some of the functors we have developed so far compose.

\subsection{Sheaves on schemes} \label{subsec:sheaves_on_schemes} We will adopt the same conventions as in~\cite{gaitsgory_atiyah-bott_2015}, except that we restrict ourselves to the ``constructible setting.'' 

\subsubsection{} In this paper, we employ a theory of constructible sheaves on schemes, with the 6 functor formalisms:
\begin{enumerate}[\quad (i)]
	\item When the ground field is $\mathbb{C}$, and $\Lambda$ is an arbitrary field of characteristic 0, we can take $\Shv(S)$ to be the ind-completion of the category of constructible sheaves on $S$ with $\Lambda$-coefficients.
	
	\item For any ground field $k$ in general, and $\Lambda = \Ql, \Qlbar$, we take $\Shv(S)$ to be the ind-completion of the category of constructible $\ell$-adic sheaves on $S$ with $\Lambda$-coefficients.
\end{enumerate}

See~\cite{liu_enhanced_2012} and~\cite{liu_enhanced_2014} for a fully developed theory.

\subsubsection{} We quickly recall the formal properties of these sheaf theories. Informally, for each scheme $S$, we assign to it the category of sheaves over it, denoted by $\Shv(S) \in \DGCatpres$, such that for each morphism of schemes
\[
	f: S_1\to S_2,
\]
we have the two usual pairs of adjunctions 
\[
	f^*: \Shv(S_2) \rightleftarrows \Shv(S_1): f_*
\]
and
\[
	f_!: \Shv(S_1) \rightleftarrows \Shv(S_2): f^!.
\]
Moreover, for
\[
	\matheur{F}_1, \matheur{F}_2 \in \Shv(S),
\]
we can form
\[
	\matheur{F}_1 \otimes \matheur{F}_2, \matheur{F}_1 \otimesshriek \matheur{F}_2 \in \Shv(S).
\]
Observe that these operations are endowed with a homotopy-coherent system of compatibilities for compositions of morphisms. Moreover, $f^!$ commutes with colimits.

\subsubsection{} More formally, our sheaf theory is given by functors
\[
	\Shv_*: \Sch \to \DGCatprescont \qandq \Shv^*: \Sch^\op \to \DGCatprescont
\]
as well as these following functors obtained by adjunctions
\[
	\Shv^!: \Sch^\op \to \DGCatprescont \qandq \Shv_!: \Sch \to \DGCatpres.
\]
For each scheme $S$,
\[
	\Shv^!(S) = \Shv^*(S) = \Shv_*(S) = \Shv_!(S) = \Shv(S).
\]
Thus, unless we want to emphasize which functor we use to move between different schemes, we will just write $\Shv(S)$.

Recall that $\DGCatprescont$ is equipped with a symmetric monoidal structure. Consider $\Sch$ equipped with the Cartesian symmetric monoidal structure. Then $\Shv^!$ and $\Shv^*$ are endowed with the right lax symmetric monoidal structure, which agree on values. Namely, for schemes $S_1, S_2$, we have a functor
\begin{align*}
	\Shv(S_1) \otimes \Shv(S_2) &\to \Shv(S_1\times S_2) \\
	(\matheur{F}_1, \matheur{F}_2) &\mapsto \matheur{F}_1\boxtimes \matheur{F}_2,
\end{align*}
equipped with a homotopy-coherent system of compatibilities for both $\Shv^!$ and $\Shv^*$.

In particular, for any scheme $S$, the functors $\diag^*$ and $\diag^!$, induced by the diagonal map $\diag: S\to S\times S$, equip $\Shv(S)$ with two separate symmetric monoidal structures
\[
	\matheur{F}_1, \matheur{F}_2 \mapsto \matheur{F}_1 \otimes \matheur{F}_2 := \diag^*(\matheur{F}_1\boxtimes \matheur{F}_2),
\]
and
\[
	\matheur{F}_1, \matheur{F}_2 \mapsto \matheur{F}_1 \otimesshriek \matheur{F}_2 := \diag^!(\matheur{F}_1\boxtimes \matheur{F}_2).
\]

The units of the $\otimes$- and $\otimesshriek$-monoidal structures on $\Shv(X)$ are $\Lambda_X$ (the constant sheaf) and $\omega_X$ (the dualizing sheaf) respectively.

\subsubsection{Base change and correspondences} \label{subsubsec:base_change_correspondences_schemes} The theory of sheaves developed in~\cites{liu_enhanced_2012, liu_enhanced_2014} in fact consists of a functor from the category of correspondences
\begin{align*}
	\Shv: \Corr(\Sch)_{\all;\all} &\to \DGCat_{\pres, \cont} \teq\label{eq:shv_sch_correspondences} \\
	S &\mapsto \Shv(S) 
\end{align*}
which sends a morphism in $\Corr(\Sch)_{\all;\all}$
\[
\xymatrix{
	M \ar[d]_v \ar[r]^h & X \\
	Y
} \teq\label{eq:corr_sch}
\]
to
\[
	v_* \circ h^!: \Shv(X) \to \Shv(Y).
\]
Here, the first (resp. second) subscript of $\Corr(\Sch)_{\all;\all}$ is used to denote that $v$ (resp. $h$) is allowed to be any morphism in $\Sch$. Moreover, the direction of the morphism between $X$ and $Y$ as pictured in~\eqref{eq:corr_sch} is that of
\[
	X \sqto Y.
\]
To avoid confusions with ``usual'' morphisms, we will use the squiggly arrow $\sqto$ to denote a morphism in the category of correspondences.

See~\cite[Intro. V]{gaitsgory_study_????} for a brief overview of correspondences and~\cite[Intro. V.1.3]{gaitsgory_study_????} for the translation  from the language used in~\cites{liu_enhanced_2012, liu_enhanced_2014} to that of correspondences.

\subsubsection{} Such a functor encodes the following base change isomorphism: suppose we have a pullback diagram of schemes
\[
\xymatrix{
	X' \ar[d]_f \ar[r]^g & X \ar[d]_f \\
	Y' \ar[r]^g & Y
}
\]
Then the fact that~\eqref{eq:shv_sch_correspondences} is a functor provides us with a natural equivalence
\[
	f_*g^! \simeq g^!f_*: \Shv(X) \to \Shv(Y').
\]

\subsubsection{} In what follows, we only need to use~\eqref{eq:shv_sch_correspondences} to construct a similar functor at the level of prestacks. This will then allow us to construct the \emph{chiral monoidal structures} on $\Ran$ and $\Conf$ in \S\ref{sec:E_X_algebras}.

\subsection{Prestacks} Recall that a prestack is a contravariant functor from $\Sch$ to $\Spc$. Namely, a prestack $\matheur{Y}$ is a functor
\[
	\matheur{Y}: \Sch^\op \to \Spc.
\]
We let $\PreStk$ denote the $\infty$-category of prestacks. Note that by Yoneda's lemma, we have a fully-faithful functor.
\[
	\Sch \hookrightarrow \PreStk.
\]

\subsection{Sheaves on prestacks} Sheaves on prestacks are defined in a formal yet straightforward way.

\subsubsection{} \label{subsubsec:def_sheaves_prestacks} For a prestack $\matheur{Y}$, the category $\Shv^!(\matheur{Y})$ is defined by
\[
	\Shv^!(\matheur{Y}) = \lim_{S\in (\Sch_{/\matheur{Y}})^\op} \Shv^!(S). \teq\label{eq:Shv_as_limits}
\]
Unwinding the definition, we see that an object $\matheur{F} \in \Shv(\matheur{Y})$ is the same as the following data
\begin{enumerate}[\quad (i)]
	\item A sheaf $\matheur{F}_{S, y} \in \Shv(S)$ for each $S\in \Sch$ and $y\in \matheur{Y}(S)$,
	\item An equivalence of sheaves $\matheur{F}_{S', f(y)} \to f^! \matheur{F}_{S, y}$ for each morphism of schemes $f: S'\to S$.
\end{enumerate}
Moreover, we require that this assignment satisfies a homotopy-coherent system of compatibilities.

\subsubsection{} More precisely, one can define $\Shv^!(\matheur{Y})$ as the right Kan extension of
\[
	\Shv^!: \Sch^\op \to \DGCatpres
\]
along the Yoneda embedding
\[
	\Sch^\op \to \PreStk^\op.
\]

\begin{rmk}
	For a prestack $\matheur{Y}$, the only notion of sheaf that we will use in this paper is that of $\Shv^!(\matheur{Y})$ as discussed above. Thus, we will write $\Shv(\matheur{Y})$ instead of $\Shv^!(\matheur{Y})$.
\end{rmk}

\subsection{Properties of prestacks and morphisms between them} \label{subsec:classes_of_prestacks_morphisms}
In the study of sheaves on schemes, special properties of schemes and morphisms between them usually translate to nice properties of the functors that act on the category of sheaves. Smooth and proper base change theorems are important examples of this pattern. In this subsection, we will introduce the names of the various special classes of prestacks as well as morphisms between them, which will be used throughout the paper.

\subsubsection{} Let $\gamma$ be a special class of morphisms between schemes that is preserved under compositions. Then we denote by $\Sch_\gamma$ the subcategory of $\Sch$ that has the same objects, but which is obtained from $\Sch$ by only allowing the 1-morphisms lying in $\gamma$.

\subsubsection{} For each $\gamma$ as above, we use $\PreStk'_\gamma$ to denote the category of functors
\[
	\Sch_\gamma^\op \to \Spc.
\]
Applying left Kan extension (see \S\ref{subsec:LKE}), we get a functor
\[
	\PreStk'_\gamma \to \PreStk,
\]
and let $\PreStk_\gamma$ denote the full subcategory of $\PreStk$ generated by the image of $\PreStk'_\gamma$.

\subsubsection{} Being a left adjoint, left Kan extension preserves colimits. Hence, a prestack $\matheur{Y}$ is in $\PreStk_\gamma$ if and only if $\matheur{Y}$ could be exhibited as a colimit of schemes
\[
	\matheur{Y} \simeq \colim_{\alpha} Z_\alpha
\]
where all the transition maps $Z_\alpha \to Z_\beta$ in the colimit are in $\Sch_\gamma$.

\subsubsection{} In this paper, the $\gamma$'s that we use are proper, smooth and \etale. A prestack $\matheur{Y}$ in $\PreStk_{\proper}, \PreStk_{\sm}$ or $\PreStk_{\et}$ is said to be a pseudo-scheme, pseudo-Artin prestack, or pseudo-DM prestack respectively.

\subsubsection{} If $\matheur{Y}_1$ is a prestack over $\matheur{Y}_2$, then we say that $\matheur{Y}_1$ is a pseudo-scheme, pseudo-Artin prestack or pseudo-DM prestack over $\matheur{Y}_2$ if the base change $\matheur{Y}_1 \times_{\matheur{Y}_2} S$ is an object in $\PreStk_\proper, \PreStk_\sm$ or $\PreStk_\et$ respectively. In the case of pseudo-scheme, we also say that the morphism $f: \matheur{Y}_1 \to \matheur{Y}_2$ is pseudo-schematic. Directly from the definition, these notions are preserved by arbitrary base change of prestacks.

\subsubsection{} A morphism $f: \matheur{Y}_1 \to \matheur{Y}_2$ between prestacks $\matheur{Y}_1, \matheur{Y}_2 \in \PreStk_\gamma$ is said to be pseudo-$\gamma$ if $f$ comes from $\PreStk_\gamma'$.

Unwinding the definition in the case where $\matheur{Y}_2 = S$ is a scheme, we see that $f$ being  pseudo-$\gamma$ is equivalent to it being possible to present $\matheur{Y}$ as a colimit of schemes
\[
	\matheur{Y} \simeq \colim_\alpha Z_\alpha \teq\label{eq:colim_pseudo_gamma}
\]
such that all the transition maps between the $Z_\alpha$'s, and the induced maps $Z_\alpha \to S$ are in $\Sch_\gamma$. 

\subsubsection{} In the case where $\gamma$ is proper, a morphism $f: \matheur{Y}_1 \to \matheur{Y}_2$ between arbitrary prestacks is said to be pseudo-proper if the pullback $\matheur{Y}_1 \times_{\matheur{Y}_2} S$ of $\matheur{Y}_1$ to any scheme $S$ over $\matheur{Y}_2$ is pseudo-proper over $S$.

\begin{rmk}
	It is tempting to define the notions of pseudo-smooth and pseudo-\etale{} for morphisms between arbitrary prestacks in an analogous manner as in the pseudo-proper case. Such a notion, however, is not well behaved. Namely, it might not be compatible with the special case defined for prestacks in $\PreStk_\gamma$. This is ultimately due to the fact that the left Kan extension $\PreStk'_\gamma \to \PreStk$ may fail to commute with finite limits in general. See also~\cite[Sect. 7.4]{gaitsgory_atiyah-bott_2015}.
\end{rmk}

\subsubsection{} A morphism $f: \matheur{Y}_1 \to \matheur{Y}_2$ between prestacks is said to be finitary pseudo-proper if the pullback $\matheur{Y}_1 \times_{\matheur{Y}_2} S$ to any scheme $S\in \Sch_{/\matheur{Y}_2}$ is a finite colimit
\[
	 \matheur{Y}_1 \times_{\matheur{Y}_2} S \simeq \colim_\alpha Z_\alpha
\]
where all the $Z_\alpha$'s are schemes proper over $S$.

\subsection{The adjoint pair $f^! \dashv f_!$} The construction of these functors are discussed in details in~\cite{gaitsgory_atiyah-bott_2015}. We will only summarize the results here.

\subsubsection{} From the construction of $\Shv(\matheur{Y})$, we get the functor
\[
	f^!: \Shv(\matheur{Y}_2) \to \Shv(\matheur{Y}_1)
\]
for free. Moreover, also from the construction, $f^!$ commutes with limits, and hence, it admits a left adjoint
\[
	f_!: \Shv(\matheur{Y}_1) \to \Shv(\matheur{Y}_2).
\]

In the case where
\[
	f: \matheur{Y} \to \Spec k,
\]
we write
\[
	C^*_c(\matheur{Y}, -) = f_!: \Shv(\matheur{Y}) \to \Shv(\Spec k) = \Vect_\Lambda.
\]

\subsubsection{} Since $\Shv$ for prestacks is constructed using right Kan extension, it turns a colimit of prestacks to a limit in $\DGCatpres$ (note that our functors are contravariant). Namely, if $\matheur{Y}$ is a colimit of prestacks, i.e. $\matheur{Y} \simeq \colim_\alpha \matheur{Y}_\alpha$, then
\[
	\Shv(\matheur{Y}) \simeq \lim_\alpha \Shv(\matheur{Y}_\alpha). \teq\label{eq:Shv_commutes_limits}
\]

\subsubsection{} By \S\ref{subsec:limit_vs_colimit}, we can alternatively represent $\Shv(\matheur{Y})$ as a colimit of categories in $\DGCatprescont$
\[
	\Shv(\matheur{Y}) \simeq \colim_\alpha \Shv(\matheur{Y}_\alpha) \teq\label{eq:Shv_as_colimits}
\]
using the fact that $f_!$ is the left adjoint to $f^!$. Let
\[
	i_\alpha: \matheur{Y}_\alpha \to \matheur{Y}
\]
denote the natural morphism. Then for any $\matheur{F} \in \Shv(\matheur{Y})$,
\[
	\matheur{F} \simeq \colim_\alpha i_{\alpha!} i^!_\alpha \matheur{F}.
\]

\begin{rmk} \label{rmk:colim_ev_ins_sheaves}
	For brevity's sake, we will write $\matheur{F}_\alpha$ or $i^!_\alpha \matheur{F}$, unless confusion might occur. This also conforms to the notation of \S\ref{subsubsec:def_sheaves_prestacks}. Thus, the equivalence above would become
	\[
		\matheur{F} \simeq \colim_\alpha i_{\alpha!} \matheur{F}_\alpha.
	\]
\end{rmk}

\subsubsection{} Note that for any prestack $\matheur{Y}$, we have an equivalence
\[
	\matheur{Y} \simeq \colim_{S\in \Sch_{/\matheur{Y}}} S.
\]
Using this presentation of $\matheur{Y}$, the equivalence~\eqref{eq:Shv_commutes_limits} is the same as~\eqref{eq:Shv_as_limits}.

\subsubsection{} In general, we don't have a good handle for $f_!$. However, when $S$ is a scheme, we have an explicit formula for it.

\begin{lem} \label{lem:compute_f_!_base_scheme}
	Let $f: \matheur{Y} \to S$ be a morphism from a prestack to a scheme, where $\matheur{Y} \simeq \colim_{\alpha} \matheur{Y}_\alpha$, and let $\matheur{F} \in \Shv(\matheur{Y})$. For each $\alpha$, we define various maps as in the following diagram
	\[
	\xymatrix{
		\matheur{Y}_\alpha \ar[r]^{i_\alpha} \ar[dr]_{f_\alpha} & \matheur{Y} \ar[d]^f \\
		& S
	}
	\]
	Then,
	\[
		f_! \matheur{F} \simeq \colim_{\alpha} f_{\alpha!} \matheur{F}_\alpha.
	\]
\end{lem}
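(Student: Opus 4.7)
The plan is to verify the stated formula by testing against the universal property of the colimit on both sides, exploiting the adjunction $f_! \dashv f^!$ together with the presentation of $\Shv(\matheur{Y})$ as the limit $\lim_\alpha \Shv(\matheur{Y}_\alpha)$ established above.

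First I would unwind the colimit presentation of $\matheur{F}$. By Remark~\ref{rmk:colim_ev_ins_sheaves}, we have $\matheur{F} \simeq \colim_\alpha i_{\alpha!}\matheur{F}_\alpha$ in $\Shv(\matheur{Y})$, where $\matheur{F}_\alpha = i_\alpha^!\matheur{F}$. Since $f_!$ is a left adjoint, it commutes with colimits, so it suffices to identify $f_! i_{\alpha!}\matheur{F}_\alpha$ with $f_{\alpha!}\matheur{F}_\alpha$ for each $\alpha$. This identification will follow by passing to right adjoints from the evident composition $f^! \circ i_\alpha^! \simeq (i_\alpha \circ f)^! \oldequiv\nobreak$, which I would justify by appealing to the fact that $\Shv^!$ is functorial on $\PreStk^{\op}$ (which is part of the construction in~\S\ref{subsubsec:def_sheaves_prestacks}); hence $(f \circ i_\alpha)_! \simeq f_! \circ i_{\alpha!}$, i.e.\ $f_{\alpha!} \simeq f_! \circ i_{\alpha!}$.

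Alternatively, and perhaps more transparently, I would verify the formula by the Yoneda argument. For any $\matheur{G} \in \Shv(S)$, we compute
\[
\Hom_{\Shv(S)}(f_!\matheur{F}, \matheur{G}) \simeq \Hom_{\Shv(\matheur{Y})}(\matheur{F}, f^!\matheur{G}).
\]
Writing $\matheur{F} \simeq \colim_\alpha i_{\alpha!}\matheur{F}_\alpha$ and using the adjunctions $i_{\alpha!} \dashv i_\alpha^!$ together with $i_\alpha^! f^! \simeq f_\alpha^!$, the right hand side becomes
\[
\lim_\alpha \Hom_{\Shv(\matheur{Y}_\alpha)}(\matheur{F}_\alpha, f_\alpha^!\matheur{G}) \simeq \lim_\alpha \Hom_{\Shv(S)}(f_{\alpha!}\matheur{F}_\alpha, \matheur{G}) \simeq \Hom_{\Shv(S)}\bigl(\colim_\alpha f_{\alpha!}\matheur{F}_\alpha, \matheur{G}\bigr),
\]
and Yoneda's lemma yields the desired equivalence $f_!\matheur{F} \simeq \colim_\alpha f_{\alpha!}\matheur{F}_\alpha$.

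I do not expect a serious obstacle here: the argument is purely formal, using only that $f_!$ is defined as a left adjoint and that the $i_{\alpha!}$ present $\Shv(\matheur{Y})$ as the corresponding colimit in $\DGCatprescont$. The only subtlety to keep track of is the homotopy-coherent compatibility between the $f_\alpha^!$ as $\alpha$ varies, but this is already packaged into the construction of $\Shv$ as a right Kan extension along the Yoneda embedding, so no further verification is required. The hypothesis that the target $S$ is a scheme (rather than a general prestack) plays no role in the proof itself; it simply ensures that the statement is useful, since in that case $f_{\alpha!}$ for each $\matheur{Y}_\alpha \to S$ is the familiar !-pushforward.
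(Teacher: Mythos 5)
Your argument is correct. The paper does not actually supply a proof of this lemma---it is stated as part of the summary of results imported from~\cite{gaitsgory_atiyah-bott_2015}---so there is no in-text proof to compare against; but both of your variants are the standard formal argument one would give: decompose $\matheur{F} \simeq \colim_\alpha i_{\alpha!}\matheur{F}_\alpha$, use that $f_!$ is a left adjoint and hence continuous, and identify $f_{\alpha!} \simeq f_! \circ i_{\alpha!}$ by passing to left adjoints in $f_\alpha^! \simeq i_\alpha^! \circ f^!$ (note the order of composition in your first variant is written backwards, $f^!\circ i_\alpha^!$ instead of $i_\alpha^!\circ f^!$, though the conclusion you draw from it is the right one). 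Your closing observations are also accurate: the only nontrivial input is that the $!$-pullbacks are continuous, which is what makes the colimit presentation of $\matheur{F}$ valid, and the hypothesis that $S$ is a scheme is not needed for the argument---it is there because that is the case in which the right-hand side becomes concretely computable.
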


\subsubsection{} Another situation where $f_!$ could be understood is when $f$ is pseudo-proper.

\begin{prop}[Pseudo-proper base change] \label{prop:pseudo_proper_base_change}
	Suppose we have the following pull-back square of prestacks
	\[
	\xymatrix{
		\matheur{X}' \ar[d]_f \ar[r]^g & \matheur{X} \ar[d]^f \\
		\matheur{Y}' \ar[r]^g & \matheur{Y}
	}
	\]
	where $f$ is pseudo-proper. Then the following natural transformation (obtained from the adjunction $f_! \dashv f^!$)
	\[
		f_! g^! \to g^! f_!
	\]
	is an equivalence.
\end{prop}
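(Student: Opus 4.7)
The plan is to reduce the statement to the case where $\matheur{Y}$ and $\matheur{Y}'$ are both schemes, and then to assemble the result from the usual proper base change for schemes via the colimit presentation of a pseudo-proper morphism. I will use the notation $f': \matheur{X}' \to \matheur{Y}'$ and $g': \matheur{X}' \to \matheur{X}$ for the base-changed maps, reserving $f,g$ for the originals, so the claim is that $f'_! g'^! \to g^! f_!$ is an equivalence.

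\medskip

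\emph{Step 1 (reduction to scheme base).} Since $\matheur{Y}' \simeq \colim_{S \in \Sch_{/\matheur{Y}'}} S$ and $\Shv(\matheur{Y}') \simeq \lim_S \Shv(S)$ by~\eqref{eq:Shv_commutes_limits}, it suffices to check that the canonical map becomes an equivalence after applying $i_S^!$ for every $i_S: S \to \matheur{Y}'$. Given such $S$, I form the iterated pullback square yielding $\matheur{X}_S := \matheur{X}' \times_{\matheur{Y}'} S = \matheur{X} \times_{\matheur{Y}} S$ with structure maps $f_S: \matheur{X}_S \to S$, $g_S: \matheur{X}_S \to \matheur{X}$, and $i'_S: \matheur{X}_S \to \matheur{X}'$. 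Since pseudo-properness is preserved by arbitrary base change, both $f'$ (against $i_S$) and $f$ (against $g \circ i_S$) are pseudo-proper morphisms to a scheme base. Granting the proposition in the scheme-base case, I apply it twice: once to get $i_S^! f'_! \simeq (f_S)_! (i'_S)^!$, and once to get $(g i_S)^! f_! \simeq (f_S)_! g_S^!$. Combined with $(i'_S)^! g'^! \simeq g_S^!$ and $i_S^! g^! \simeq (g i_S)^!$ from the fact that $\Shv^!$ is a functor, both sides equal $(f_S)_! g_S^! \matheur{F}$, so it reduces to the scheme-base case.

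\medskip

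\emph{Step 2 (scheme-base case).} Assume now that $\matheur{Y}$ and $\matheur{Y}'$ are schemes. By pseudo-properness, write $\matheur{X} \simeq \colim_\alpha Z_\alpha$ with each transition map and each structure map $f_\alpha: Z_\alpha \to \matheur{Y}$ proper. Setting $W_\alpha := Z_\alpha \times_\matheur{Y} \matheur{Y}'$, the pullback $\matheur{X}' \simeq \colim_\alpha W_\alpha$ exhibits $f'$ as pseudo-proper over $\matheur{Y}'$. Lemma~\ref{lem:compute_f_!_base_scheme} gives $f_! \matheur{F} \simeq \colim_\alpha (f_\alpha)_! \matheur{F}_\alpha$, and because $g^!$ commutes with colimits (a general feature of our sheaf theory), I obtain $g^! f_! \matheur{F} \simeq \colim_\alpha g^! (f_\alpha)_! \matheur{F}_\alpha$. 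For each $\alpha$, the scheme-level proper base change built into the correspondence formalism~\eqref{eq:shv_sch_correspondences} (applied to the Cartesian square $W_\alpha, Z_\alpha, \matheur{Y}', \matheur{Y}$) identifies $g^! (f_\alpha)_! \matheur{F}_\alpha$ with $(f'_\alpha)_! (g'_\alpha)^! \matheur{F}_\alpha$. Using $(g'_\alpha)^! \matheur{F}_\alpha \simeq (i'_\alpha)^! g'^! \matheur{F}$ from composition of $!$-pullbacks and applying Lemma~\ref{lem:compute_f_!_base_scheme} in reverse for $f'$ with the presentation $\matheur{X}' \simeq \colim_\alpha W_\alpha$, this colimit reassembles into $f'_! g'^! \matheur{F}$.

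\medskip

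\emph{Expected difficulty.} The individual equivalences are essentially immediate from the inputs; the real work is verifying that the candidate map constructed from the $f'_! \dashv f'^!$ adjunction coincides with the equivalence I just assembled by hand — i.e., the homotopy-coherent compatibility of the per-$\alpha$ proper base change isomorphisms with the colimit structure and with the unit/counit of the various adjunctions. This amounts to a bookkeeping argument about how the functor~\eqref{eq:shv_sch_correspondences} out of $\Corr(\Sch)_{\all;\all}$ extends to prestacks via left Kan extension, and is the only step requiring care; once the identification of morphisms is in place, Step 1 and Step 2 formally give the result.
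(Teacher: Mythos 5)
The paper does not actually prove this proposition --- it is imported wholesale from~\cite{gaitsgory_atiyah-bott_2015} --- so your attempt must stand on its own, and it has a genuine gap between Step 1 and Step 2. Step 1 correctly reduces the statement, via $\Shv(\matheur{Y}')\simeq\lim_S\Shv(S)$, to squares in which the \emph{new} base is a scheme $S$; but in both squares you then invoke, namely $(\matheur{X}_S,\matheur{X}',S,\matheur{Y}')$ and $(\matheur{X}_S,\matheur{X},S,\matheur{Y})$, the \emph{original} base ($\matheur{Y}'$, resp.\ $\matheur{Y}$) is still an arbitrary prestack. Step 2 only treats the case where \emph{both} bases are schemes, which is precisely the regime where Lemma~\ref{lem:compute_f_!_base_scheme} hands you a formula for the pushforward on both sides of the square. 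In the case you actually need, $f_!\colon\Shv(\matheur{X})\to\Shv(\matheur{Y})$ is defined only abstractly as the left adjoint of $f^!$, and none of your tools apply to it: if you try to write $\matheur{F}\simeq\colim_U (j_U)_!\matheur{F}_U$ over $U\in\Sch_{/\matheur{X}}$ and push forward, you are forced to commute $g^!$ past $(f\circ j_U)_!$ for an arbitrary (non-pseudo-proper) map from a scheme to a prestack, which fails in general. So the argument is circular: the assertion that $i_S^!f'_!$ is computed on the fiber over $S$ is essentially the proposition itself.

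The missing ingredient --- and the real content of the statement --- is the identification of the abstract left adjoint $f_!$ with the value-wise functor $S\mapsto (f_S)_!\,g_S^!(-)$. Concretely: use your Step 2 to show that for $\matheur{F}\in\Shv(\matheur{X})$ the assignment $(S,y)\mapsto (f_S)_!\,g_S^!\matheur{F}$ is compatible (coherently) with $!$-restriction along morphisms of $\Sch_{/\matheur{Y}}$, hence defines an object $F\matheur{F}$ of $\Shv(\matheur{Y})=\lim_S\Shv(S)$; then verify $F\dashv f^!$ by computing $\mathrm{Maps}(F\matheur{F},\matheur{G})\simeq\lim_S\mathrm{Maps}\bigl(g_S^!\matheur{F},g_S^!f^!\matheur{G}\bigr)\simeq\mathrm{Maps}(\matheur{F},f^!\matheur{G})$, using $(f_S)_!\dashv (f_S)^!$ and $\matheur{X}\simeq\colim_S\matheur{X}_S$, so that $\Shv(\matheur{X})\simeq\lim_S\Shv(\matheur{X}_S)$. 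This forces $F\simeq f_!$, and then your ``Case A'' (and with it the general case via Step 1) holds essentially by construction. Your Step 2 is correct and is a necessary ingredient of this argument, and the coherence issues you flag at the end are real, but they are secondary to this missing step.
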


\subsection{The adjoint pair $f^* \dashv f_*$}  The functors $f^*$ and $f_*$ are not defined in general for an arbitrary morphism $f: \matheur{Y}_1 \to \matheur{Y}_2$ between prestacks. However, these functors are defined, and we have an adjoint pair $f^* \dashv f_*$ as usual, when $f$ is schematic.

\subsubsection{} Recall that a morphism of prestacks
\[
	f:\matheur{Y}_1\to \matheur{Y}_2
\]
is schematic if the base change of $\matheur{Y}_1$ to any scheme $S$ over $\matheur{Y}_2$ is a scheme.

\subsubsection{} We will make use of the following observation in the construction of the adjoint pair $f^* \dashv f_*$. Suppose we have a correspondence of prestacks
\[
\xymatrix{
	& C \ar[ld]_f \ar[dr]^g \\
	X && \matheur{Y}
}
\]
where $C$ and $X$ are schemes. Then we have the following pair of adjoint functors
\[
	g_! f^*: \Shv(X) \rightleftarrows \Shv(\matheur{Y}): f_* g^!
\]

\begin{prop} \label{prop:def_f_*_f^*}
	Let $f: \matheur{X} \to \matheur{Y}$ be a schematic morphism between prestacks. Then we have a pair of adjoint functors
	\[
		f^*: \Shv(\matheur{Y}) \rightleftarrows \Shv(\matheur{X}): f_*.
	\]
	Moreover, these functors are compatible with compositions of schematic morphisms between prestacks.
\end{prop}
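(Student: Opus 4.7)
The plan is to construct both functors by reducing to the case of schemes via the schematicness of $f$, exploiting the limit/colimit presentation of $\Shv(\matheur{Y})$ together with the correspondence-based observation preceding the statement. For every scheme $T$ with a morphism $y\colon T\to \matheur{Y}$, the base change $\matheur{X}_T := \matheur{X}\times_\matheur{Y} T$ is a scheme, so we get a Cartesian square of prestacks
\[
\xymatrix{
\matheur{X}_T \ar[d]_{f_T} \ar[r]^{g_T} & \matheur{X} \ar[d]^{f} \\
T \ar[r]^{y} & \matheur{Y}
}
\]
in which $f_T$ is a morphism of schemes, and the correspondence $T \leftarrow \matheur{X}_T \to \matheur{X}$ lies in the situation of the observation before the proposition, giving an adjunction $g_{T!}f_T^* \dashv f_{T*}g_T^!$.

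First I would construct $f_*$ using the presentation $\Shv(\matheur{Y}) \simeq \lim_{T \in \Sch_{/\matheur{Y}}} \Shv(T)$ with $!$-pullback transitions, by setting
\[
(f_*\matheur{G})_{T,y} \;:=\; f_{T*}\bigl(g_T^!\matheur{G}\bigr) \;\in\; \Shv(T).
\]
To see this is consistent with the $!$-transitions, I would take a morphism $h\colon T'\to T$ over $\matheur{Y}$, observe that the resulting square of schemes $\matheur{X}_{T'}\to \matheur{X}_T,\; T'\to T$ is Cartesian, and invoke the base change $h^! f_{T*} \simeq f_{T'*} h'^!$ from \S\ref{subsubsec:base_change_correspondences_schemes} together with the identity $h'^! g_T^! \simeq g_{T'}^!$. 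The full homotopy-coherence of this assignment is most cleanly packaged by restricting the functor $\Shv\colon \Corr(\Sch)_{\all;\all} \to \DGCatprescont$ of~\eqref{eq:shv_sch_correspondences} along the natural functor sending $T$ to the correspondence $T \leftarrow \matheur{X}_T \to \matheur{X}$, and then taking the limit over $\Sch_{/\matheur{Y}}$.

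Next I would construct $f^*$ using the dual presentation $\Shv(\matheur{Y}) \simeq \colim_{T} \Shv(T)$ in $\DGCatprescont$ afforded by~\eqref{eq:Shv_as_colimits}, defining the $T$-component of $f^*$ to be $g_{T!} f_T^* \colon \Shv(T) \to \Shv(\matheur{X})$. Compatibility across a morphism $h\colon T'\to T$ over $\matheur{Y}$ reduces to $g_{T!}f_T^* h_! \simeq g_{T'!} f_{T'}^*$, which follows from scheme-level base change $f_T^* h_! \simeq h'_! f_{T'}^*$ combined with $g_{T!} h'_! \simeq g_{T'!}$. The adjunction $f^* \dashv f_*$ is then automatic: for $\matheur{F} \in \Shv(\matheur{Y})$ written as $\colim_T j_{T!}\matheur{F}_T$ (with $j_T\colon T\to \matheur{Y}$), one computes
\[
\mathrm{Maps}(f^*\matheur{F},\matheur{G}) \simeq \lim_T \mathrm{Maps}(g_{T!}f_T^*\matheur{F}_T, \matheur{G}) \simeq \lim_T \mathrm{Maps}(\matheur{F}_T, f_{T*}g_T^!\matheur{G}) \simeq \mathrm{Maps}(\matheur{F}, f_*\matheur{G}),
\]
where the first equivalence is~\eqref{eq:Shv_as_colimits}, the middle one is the adjunction on each component, and the last uses the definition of $f_*$ together with the limit presentation of $\Shv(\matheur{Y})$.

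Finally, compatibility with compositions $\matheur{X}_1 \xrightarrow{f_1} \matheur{X}_2 \xrightarrow{f_2} \matheur{X}_3$ reduces, after base change along $T \to \matheur{X}_3$, to the same compatibility for ordinary schematic morphisms of schemes, which is part of the input sheaf theory. The step I expect to be the main obstacle is not any individual base-change check but the bookkeeping to present $f^*$ and $f_*$ as functors of $\infty$-categories with a coherent unit/counit data rather than merely objectwise-defined assignments; the cleanest remedy is to package everything through the functor of correspondences~\eqref{eq:shv_sch_correspondences}, which already carries the coherences for free and makes the two constructions above manifestly adjoint and compatible with compositions.
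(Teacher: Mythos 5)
Your proposal is correct and follows essentially the same route as the paper: both define the components of $f^*$ as $g_{T!}f_T^*$ and of $f_*$ as $f_{T*}g_T^!$ over the (co)limit presentation of $\Shv(\matheur{Y})$, verify compatibility of the transition maps by scheme-level base change, and deduce the adjunction from the componentwise adjunctions (the paper packages this last step via Lemma~\ref{lem:cone_alpha_alphaR} rather than the explicit Maps computation). Your closing remark about handling coherence through the correspondence functor~\eqref{eq:shv_sch_correspondences} is exactly what the paper does in its subsequent subsection on correspondences.
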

\begin{proof}
	We will make use of Lemma~\ref{lem:cone_alpha_alphaR}. Namely, we will define the functor $f^*$ by constructing a family of compatible functors
	\[
		l_\alpha: \Shv(Y_\alpha) \to \Shv(\matheur{X})
	\]
	where $Y_\alpha$ runs over $\Sch_{/\matheur{Y}}$, and where the transition functors on the left hand side is $(-)_!$.
	
	Similarly, we will define $f_*$ by constructing a family of compatible functors
	\[
		r_\alpha: \Shv(\matheur{X}) \to \Shv(Y_\alpha)
	\]
	where $Y_\alpha$ runs over $\Sch_{/\matheur{Y}}$, and the transition functors on the left hand side is $(-)^!$.
	
	Now, for any scheme $Y_\alpha \in \Sch_{/\matheur{Y}}$, consider the following Cartesian square
	\[
	\xymatrix{
		X_\alpha \ar[d]_{f_\alpha} \ar[r]^{i'_\alpha} & \matheur{X} \ar[d]^f \\
		Y_\alpha \ar[r]^{i_\alpha} & \matheur{Y}
	}
	\]
	Since $f$ is schematic, $X_\alpha$ is a scheme, so the pair of maps $f_\alpha$ and $i'_\alpha$ forms a correspondence of the type described in the observation above.
	
	We define $l_\alpha = i'_{\alpha!}f_\alpha^*$ and $r_\alpha = f_{\alpha *}i'^!_\alpha$. By base change theorems for schemes, we see that these functors do form a family compatible with the respective transition functors. Moreover, for each $\alpha$, $l_\alpha \dashv r_\alpha$, and hence, $f^* \dashv f_*$. It is clear from the construction that $f^*$ and $f_*$ are compatible with compositions of schematic morphisms between prestacks.
\end{proof}

\begin{rmk}
	In the case where $f: X\to Y$ is a morphism of schemes, our functors $f^*$ and $f_*$ defined above coincide with the usual pullback and pushforward of sheaves on schemes.
\end{rmk}

\subsection{Equivalences of functors} When dealing with sheaves over a scheme, some of the functors that we have defined above, i.e. $f^!, f_!, f_*$, and $f^*$, may coincide depending on special properties of $f$. We have a similar situation in the world of prestacks.

\begin{prop} \label{prop:coincidence_f_*_f_!_proper}
	Let $f: \matheur{Y}_1 \to \matheur{Y}_2$ be a proper morphism between prestacks. Then, $f_* \simeq f_!$.
\end{prop}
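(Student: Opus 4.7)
The proposition asserts that when $f: \matheur{Y}_1 \to \matheur{Y}_2$ is schematic and proper (schematic being needed in order for $f_*$ to make sense via Proposition~\ref{prop:def_f_*_f^*}), the two functors $f_!$ and $f_*$ coincide. My plan is to check the equivalence stalkwise, i.e. after restriction to every scheme $Y_\alpha \in \Sch_{/\matheur{Y}_2}$, and then invoke the limit presentation $\Shv(\matheur{Y}_2) \simeq \lim_{Y_\alpha} \Shv(Y_\alpha)$ from \eqref{eq:Shv_as_limits}.

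First, I would observe that a schematic proper morphism $f$ is pseudo-proper: for any $Y_\alpha \to \matheur{Y}_2$, the base change $\matheur{Y}_1 \times_{\matheur{Y}_2} Y_\alpha$ is a scheme $X_\alpha$ proper over $Y_\alpha$, and any scheme proper over $Y_\alpha$ lies in $\PreStk_\proper$ (presented by itself). Hence Proposition~\ref{prop:pseudo_proper_base_change} applies. For each $Y_\alpha$, forming the Cartesian square as in the proof of Proposition~\ref{prop:def_f_*_f^*},
\[
\xymatrix{
X_\alpha \ar[d]_{f_\alpha} \ar[r]^{i'_\alpha} & \matheur{Y}_1 \ar[d]^f \\
Y_\alpha \ar[r]^{i_\alpha} & \matheur{Y}_2
}
\]
pseudo-proper base change yields $i_\alpha^! f_! \simeq f_{\alpha!} i'^!_\alpha$, while the construction of $f_*$ in Proposition~\ref{prop:def_f_*_f^*} gives $i_\alpha^! f_* \simeq f_{\alpha*} i'^!_\alpha$.

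Since $f_\alpha : X_\alpha \to Y_\alpha$ is a proper morphism of schemes, the classical identification $f_{\alpha*} \simeq f_{\alpha!}$ at the scheme level provides a chain of equivalences
\[
i_\alpha^! f_! \simeq f_{\alpha!} i'^!_\alpha \simeq f_{\alpha*} i'^!_\alpha \simeq i_\alpha^! f_*
\]
for each $\alpha$. This already shows that $f_! \matheur{F}$ and $f_* \matheur{F}$ have the same $!$-restrictions to every scheme over $\matheur{Y}_2$, which is morally the conclusion.

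The main work, and the only subtle point, is promoting these pointwise equivalences to a genuine equivalence of functors in the $\infty$-categorical sense. For this I would package everything functorially over $\Sch_{/\matheur{Y}_2}$: the natural transformation $f_! \to f_*$ (adjoint to $\id \to f^! f_*$ via the $f_! \dashv f^!$ adjunction, or dually constructed) must be shown to induce the above scheme-level equivalences compatibly with all transition morphisms between the $Y_\alpha$'s. Naturality of pseudo-proper base change in Proposition~\ref{prop:pseudo_proper_base_change} handles the $f_!$-side, naturality of the construction in Proposition~\ref{prop:def_f_*_f^*} handles the $f_*$-side, and the coherent naturality of the $f_{\alpha*} \simeq f_{\alpha!}$ identification for proper maps of schemes (itself a statement encoded by the functor $\Shv$ on $\Corr(\Sch)_{\all;\all}$ in \S\ref{subsubsec:base_change_correspondences_schemes}) glues the two sides together. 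Conclude by invoking $\Shv(\matheur{Y}_2) \simeq \lim_{Y_\alpha} \Shv(Y_\alpha)$: a morphism in $\Shv(\matheur{Y}_2)$ is an equivalence iff its $i_\alpha^!$-restrictions are, so the map $f_! \to f_*$ is an equivalence. The only genuine obstacle is this coherence bookkeeping; the geometric input reduces entirely to the proper case for schemes.
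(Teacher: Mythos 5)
Your proposal is correct and follows essentially the same route as the paper: the paper's proof likewise observes that properness implies pseudo-properness, so that Proposition~\ref{prop:pseudo_proper_base_change} lets $f_!$ be computed value-wise over schemes $Y_\alpha \in \Sch_{/\matheur{Y}_2}$, which matches the value-wise construction of $f_*$ in Proposition~\ref{prop:def_f_*_f^*} once one invokes $f_{\alpha*}\simeq f_{\alpha!}$ for the proper scheme morphisms $f_\alpha$. Your additional attention to the coherence of these identifications over $\Sch_{/\matheur{Y}_2}$ is a more careful spelling-out of what the paper leaves implicit, not a different argument.
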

\begin{proof}
	Since $f$ is proper, it is, in particular, also a pseudo-proper morphism. Thus, Proposition~\ref{prop:pseudo_proper_base_change} implies that $f_!$ could be computed value wise by base changing to schemes. But this is precisely how $f_*$ is constructed in Proposition~\ref{prop:def_f_*_f^*}. Hence, $f_* \simeq f_!$.
\end{proof}

\begin{rmk}
	For a morphism between prestacks $f: \matheur{Y}_1 \to \matheur{Y}_2$, by valuative criterion for properness, the following are equivalent
	\begin{enumerate}[\quad (i)]
		\item $f$ is pseudo-proper and schematic.
		\item $f$ is proper.
	\end{enumerate}
\end{rmk}

\begin{prop} \label{prop:coincidence_f^!_f^*_etale}
	Let $f: \matheur{Y}_1 \to \matheur{Y}_2$ be an \etale{} morphism between prestacks. Then $f^! \simeq f^*$.
\end{prop}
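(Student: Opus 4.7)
The plan is to compare $f^!$ and $f^*$ on a generating family of sheaves, proceeding analogously to the proof of Proposition~\ref{prop:coincidence_f_*_f_!_proper}. An étale morphism $f: \matheur{Y}_1 \to \matheur{Y}_2$ is in particular schematic, so for each $Y_\alpha \in \Sch_{/\matheur{Y}_2}$, base change yields a Cartesian square
\[
\xymatrix{
X_\alpha \ar[d]_{f_\alpha} \ar[r]^{i'_\alpha} & \matheur{Y}_1 \ar[d]^f \\
Y_\alpha \ar[r]^{i_\alpha} & \matheur{Y}_2
}
\]
in which $f_\alpha$ is an étale morphism of schemes, for which the scheme-level equivalence $f_\alpha^! \simeq f_\alpha^*$ is a standard consequence of the six-functor formalism.

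By Proposition~\ref{prop:def_f_*_f^*}, we have $f^* \circ i_{\alpha!} \simeq i'_{\alpha!} \circ f_\alpha^*$ essentially by construction, which under the scheme-level equivalence becomes $f^* \circ i_{\alpha!} \simeq i'_{\alpha!} \circ f_\alpha^!$. The main remaining step is to prove the analogue for $f^!$, namely $f^! \circ i_{\alpha!} \simeq i'_{\alpha!} \circ f_\alpha^!$. Commutativity of the square gives a canonical equivalence $f_! \circ i'_{\alpha!} \simeq i_{\alpha!} \circ f_{\alpha!}$, and passing to right adjoints in the standard way yields a Beck--Chevalley natural transformation $i'_{\alpha!} \circ f_\alpha^! \to f^! \circ i_{\alpha!}$. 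I would verify that this is an equivalence by testing after $j^!$ for each scheme $j: X_\beta \to \matheur{Y}_1$, which is permissible by the limit presentation $\Shv(\matheur{Y}_1) \simeq \lim_{X_\beta} \Shv(X_\beta)$; both sides then unwind into expressions entirely in the scheme world and coincide by the scheme-level base change recorded in \S\ref{subsubsec:base_change_correspondences_schemes}.

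Assembling these ingredients, the equivalences $f^! \circ i_{\alpha!} \simeq f^* \circ i_{\alpha!}$ hold compatibly in $\alpha$, and since $\Shv(\matheur{Y}_2) \simeq \colim_\alpha \Shv(Y_\alpha)$ in $\DGCatprescont$ with structure functors $i_{\alpha!}$ while both $f^!$ and $f^*$ are continuous, we conclude $f^! \simeq f^*$. The principal obstacle is the verification of the base change $i'_{\alpha!} \circ f_\alpha^! \simeq f^! \circ i_{\alpha!}$: it cannot be deduced directly from Proposition~\ref{prop:pseudo_proper_base_change} since $i_\alpha$ is generally not pseudo-proper, and so must instead be extracted from the definition of $\Shv$ on prestacks as a limit over $\Sch_{/-}$ together with the scheme-level base change of correspondences encoded by~\eqref{eq:shv_sch_correspondences}.
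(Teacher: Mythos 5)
Your reduction to the scheme-level fact $f_\alpha^! \simeq f_\alpha^*$ and the identification $f^* \circ i_{\alpha!} \simeq i'_{\alpha!} \circ f_\alpha^*$ are both fine, and you have correctly isolated the crux, namely the base change $i'_{\alpha!} \circ f_\alpha^! \simeq f^! \circ i_{\alpha!}$. But your proposed verification of that base change has a genuine gap. After applying $j^!$ for a scheme point $j\colon X_\beta \to \matheur{Y}_1$, the right-hand side becomes $(f\circ j)^! \circ i_{\alpha!}$, i.e.\ the $!$-restriction of the abstract left adjoint $i_{\alpha!}$ along a scheme point of $\matheur{Y}_2$ unrelated to $i_\alpha$. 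This does \emph{not} unwind into the scheme world: computing $z^! w_!$ for two maps $z, w$ from schemes into a genuine prestack is itself a base change problem, which the paper only solves under pseudo-properness (Proposition~\ref{prop:pseudo_proper_base_change}) or pseudo-smoothness hypotheses, and neither applies to $i_\alpha$. The scheme-level base change encoded by~\eqref{eq:shv_sch_correspondences} concerns $v_* h^!$ for Cartesian squares of schemes and gives no handle on these composites. So the ``test after $j^!$'' step establishes nothing; indeed, any direct pointwise identification of $f^! \circ i_{\alpha!}$ is essentially equivalent to the proposition one is trying to prove.

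The way out is to avoid computing $f^! \circ i_{\alpha!}$ altogether and compare right adjoints instead. Since $f$ is schematic, present $\matheur{Y}_1 \simeq \colim_{\alpha} X_\alpha$ over $\alpha \in \Sch_{/\matheur{Y}_2}$, so that $\Shv(\matheur{Y}_1) \simeq \lim_\alpha \Shv(X_\alpha)$ and $f^!$ is the functor induced on limits by the family $(f_\alpha^!)$, which is compatible with the $(-)^!$ transition functors by functoriality of $!$-pullback. Since each $f_\alpha$ is an \etale{} morphism of schemes, $f_\alpha^! \simeq f_\alpha^* \dashv f_{\alpha*}$, and the family $(f_{\alpha*})$ is also compatible with the $(-)^!$ transition functors by the scheme-level base change of \S\ref{subsubsec:base_change_correspondences_schemes}; the functor it induces on limits is precisely the $f_*$ of Proposition~\ref{prop:def_f_*_f^*}. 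Lemma~\ref{lem:family_limit_ptwise_adjunction} then gives $f^! \dashv f_*$, and uniqueness of left adjoints yields $f^! \simeq f^*$. This adjunction argument is presumably what the paper's one-line proof (``direct from the corresponding fact for schemes'') has in mind.
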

\begin{proof}
	This is direct from the corresponding fact for schemes.
\end{proof}

\subsection{Base change results} Base change theorems play an important role in the theory of sheaves on schemes. Proposition~\ref{prop:pseudo_proper_base_change} is the analog of the proper base change theorem. In this subsection, we will discuss several analogs of the proper and smooth base change theorems.

Throughout this subsection, when stating various base change results, we will keep referring to the following Cartesian diagram of prestacks
\[
\xymatrix{
	\matheur{X}' \ar[r]^g \ar[d]_f & \matheur{X} \ar[d]^f \\
	\matheur{Y}' \ar[r]^g & \matheur{Y} 
} \teq\label{eq:cartesian_sq_base_change}
\]

\subsubsection{Base change for $f_*$ and $g^!$} Since the functor $f_*$ is defined value-wise, when $f$ is a schematic morphism between prestacks, it's easy to see that we have the following base change result.

\begin{prop} \label{prop:f_*_g^!_base_change}
	Consider the Cartesian square~\eqref{eq:cartesian_sq_base_change}, where $f$ is schematic. Then, for any $\matheur{F} \in \Shv(\matheur{X})$, we have a natural equivalence
	\[
		g^! f_* \matheur{F} \simeq f_* g^! \matheur{F}.
	\]

	As a consequence, by passing to the left adjoints, for any $\matheur{G} \in \Shv(\matheur{Y}')$, we also have a natural equivalence
	\[
		f^* g_! \matheur{G} \simeq f^* g_! \matheur{G}.
	\]
\end{prop}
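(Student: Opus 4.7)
The plan is to reduce the proposition to the analogous base-change statement for schemes, which is already built into the construction of the six functors that we are using. Recall from the proof of Proposition~\ref{prop:def_f_*_f^*} that for a schematic morphism $f$ between prestacks, the functor $f_*$ is defined value-wise: to any scheme $Y_\alpha \in \Sch_{/\matheur{Y}}$ with structure map $i_\alpha: Y_\alpha \to \matheur{Y}$, one forms the Cartesian square
\[
\xymatrix{
	X_\alpha \ar[d]_{f_\alpha} \ar[r]^{i'_\alpha} & \matheur{X} \ar[d]^f \\
	Y_\alpha \ar[r]^{i_\alpha} & \matheur{Y}
}
\]
(in which $X_\alpha$ is a scheme because $f$ is schematic), and the $!$-restriction of $f_*\matheur{F}$ to $Y_\alpha$ is $f_{\alpha *}\, i'^!_\alpha \matheur{F}$. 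So $f_*\matheur{F}$ is characterized by this compatible family of values.

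First I would reduce checking the equivalence $g^! f_* \matheur{F} \simeq f_* g^! \matheur{F}$ in $\Shv(\matheur{Y}')$ to checking it after $!$-restriction along every structure map $i'_\beta: Y'_\beta \to \matheur{Y}'$, by~\eqref{eq:Shv_as_limits}. Composing, each such $Y'_\beta$ also yields a scheme $i_\beta := g \circ i'_\beta : Y'_\beta \to \matheur{Y}$, so we can form the two Cartesian squares
\[
\xymatrix{
	X'_\beta \ar[d]_{f_\beta} \ar[r]^{h} & \matheur{X}' \ar[d]_f \ar[r]^g & \matheur{X} \ar[d]^f \\
	Y'_\beta \ar[r]^{i'_\beta} & \matheur{Y}' \ar[r]^g & \matheur{Y}
}
\]
whose outer rectangle is also Cartesian with bottom composite $i_\beta$. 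Unwinding the definition of $f_*$ applied to the schematic morphisms $f: \matheur{X} \to \matheur{Y}$ and $f: \matheur{X}' \to \matheur{Y}'$, and using $g \circ h = i'_\alpha$ (where $i'_\alpha$ now denotes the right map of the outer rectangle) together with compatibility of $(-)^!$ with composition, both sides restrict to $f_{\beta *}(g \circ h)^! \matheur{F}$ on $Y'_\beta$. Compatibility of these identifications as $Y'_\beta$ varies follows formally from the homotopy-coherent functoriality encoded in~\eqref{eq:shv_sch_correspondences}.

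For the second assertion, which should read $f^* g_! \matheur{G} \simeq g_! f^* \matheur{G}$, I would simply pass to left adjoints: the adjunctions $f^* \dashv f_*$ (on both columns) from Proposition~\ref{prop:def_f_*_f^*} and $g_! \dashv g^!$ together turn the natural equivalence $g^! f_* \simeq f_* g^!$ into the adjoint equivalence $f^* g_! \simeq g_! f^*$.

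The proof contains no genuine obstacle beyond notational bookkeeping; the only mild point of care will be in step two, where one must verify that the identifications $g^! f_*\matheur{F}|_{Y'_\beta} \simeq f_{\beta *}(g \circ h)^!\matheur{F} \simeq f_*g^!\matheur{F}|_{Y'_\beta}$ fit into a coherent natural transformation as $\beta$ varies. This is standard once one invokes the functoriality of $\Shv$ on the category of correspondences (see \S\ref{subsubsec:base_change_correspondences_schemes}), which already packages smooth and proper base change for schemes into a single piece of data.
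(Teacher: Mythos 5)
Your argument is correct and is exactly the paper's (unwritten) proof: the paper disposes of this proposition with the single remark that $f_*$ is defined value-wise, and your restriction to each $Y'_\beta \in \Sch_{/\matheur{Y}'}$, identification of both sides with $f_{\beta*}(g\circ h)^!\matheur{F}$ via the Cartesian outer rectangle, and passage to left adjoints is precisely that reasoning spelled out. You are also right that the second displayed equivalence in the statement contains a typo and should read $f^* g_!\matheur{G} \simeq g_! f^*\matheur{G}$.
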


\begin{cor} \label{cor:id_fully_faithful}
	Let $f: \matheur{X} \to \matheur{Y}$ be a fully faithful schematic morphism between prestacks. Then, we have the following natural equivalence
	\[
		f^! f_* \simeq f^* f_! \simeq \id_{\Shv(\matheur{X})}.
	\]
	When $f$ is proper, we also have
	\[
		f^*f_* \simeq f^* f_! \simeq f^!f_! \simeq \id_{\Shv(\matheur{X})}.
	\]
\end{cor}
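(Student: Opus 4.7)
The plan is to reduce every claimed equivalence to the base change result of Proposition~\ref{prop:f_*_g^!_base_change} combined with an elementary observation about what fully faithfulness means at the level of Cartesian squares. Recall that $f$ being fully faithful as a morphism of prestacks is equivalent to the diagonal $\matheur{X} \to \matheur{X}\times_\matheur{Y} \matheur{X}$ being an equivalence; in other words, the square
\[
\xymatrix{
\matheur{X} \ar[r]^{\id_\matheur{X}} \ar[d]_{\id_\matheur{X}} & \matheur{X} \ar[d]^f \\
\matheur{X} \ar[r]^f & \matheur{Y}
}
\]
is Cartesian.

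Since $f$ is assumed schematic, Proposition~\ref{prop:f_*_g^!_base_change} applies to this square. Taking the equivalence $g^! f_* \simeq f_* g^!$ from that proposition and substituting our diagram (with $g=f$ on the bottom/right and $g=f=\id$ on the top/left) yields
\[
f^! f_* \simeq \id_* \circ \id^! \simeq \id_{\Shv(\matheur{X})}.
\]
Passing to left adjoints via the pairings $f_! \dashv f^!$ and $f^* \dashv f_*$, exactly as in the second part of Proposition~\ref{prop:f_*_g^!_base_change}, produces the companion equivalence $f^* f_! \simeq \id_{\Shv(\matheur{X})}$. This disposes of the first half of the corollary.

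For the statement in the proper case, I invoke Proposition~\ref{prop:coincidence_f_*_f_!_proper}, which gives $f_! \simeq f_*$ for proper $f$. Substituting this identification into the two equivalences already established immediately produces $f^! f_! \simeq \id$ and $f^* f_* \simeq \id$, completing the proof.

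The only step that requires any thought is the identification of fully faithfulness with the Cartesian square above; I do not anticipate any genuine obstacle, since once that input is in place everything else is a direct appeal to the base change and coincidence propositions already recorded in the section.
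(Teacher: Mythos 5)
Your proof is correct and follows essentially the same route as the paper: the paper's own argument is precisely to combine Proposition~\ref{prop:f_*_g^!_base_change} with the observation that $\matheur{X}\times_{\matheur{Y}}\matheur{X}\simeq\matheur{X}$ for fully faithful $f$, then use Proposition~\ref{prop:coincidence_f_*_f_!_proper} in the proper case. No gaps; your write-up just spells out the adjoint-passing step that the paper leaves implicit.
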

\begin{proof}
	This is direct from the base change result above and the fact that $\matheur{X} \times_{\matheur{Y}} \matheur{X} \simeq \matheur{X}$.
\end{proof}

\begin{cor} \label{cor:f^*_as_colimit}
	Let $f: \matheur{X} \to \matheur{Y}$ be a schematic morphism between prestacks. Then for any $\matheur{F} \in \Shv(\matheur{X})$,
	\[
		f^* \matheur{F} \simeq \colim i_{\alpha!} f_\alpha^* \matheur{F}_\alpha,
	\]
	where $\matheur{Y} \simeq \colim_\alpha Y_\alpha$ and the morphisms are named as in the following Cartesian diagram
	\[
	\xymatrix{
		X_\alpha \ar[r]^{i_\alpha} \ar[d]_{f_\alpha} & \matheur{X} \ar[d]^f \\
		Y_\alpha \ar[r]^{i_\alpha} & \matheur{Y}
	}
	\]
\end{cor}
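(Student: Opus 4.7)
The plan is to combine the colimit presentation of sheaves on prestacks with the base change isomorphism for the adjoint pair $f^* \dashv f_*$. (I read the statement as concerning $\matheur{F} \in \Shv(\matheur{Y})$, since otherwise $f^*\matheur{F}$ would not type-check.)

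First, I would invoke the colimit presentation from~\eqref{eq:Shv_as_colimits}, together with the notational convention of Remark~\ref{rmk:colim_ev_ins_sheaves}, to write
\[
    \matheur{F} \simeq \colim_\alpha i_{\alpha!} \matheur{F}_\alpha
\]
inside $\Shv(\matheur{Y})$, where the colimit runs over the indexing diagram used to present $\matheur{Y} \simeq \colim_\alpha Y_\alpha$.

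Next, since $f$ is schematic, Proposition~\ref{prop:def_f_*_f^*} provides the adjunction $f^* \dashv f_*$, so $f^*$ is a left adjoint and in particular commutes with arbitrary colimits. Applying $f^*$ termwise yields
\[
    f^*\matheur{F} \simeq \colim_\alpha f^* i_{\alpha!} \matheur{F}_\alpha.
\]

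Finally, for each $\alpha$ the Cartesian square displayed in the statement is precisely the setup of Proposition~\ref{prop:f_*_g^!_base_change}: the vertical map $f$ is schematic, and $i_\alpha$ plays the role of $g$. Passing to the left-adjoint version of that base change (which the proposition records explicitly) gives a natural equivalence
\[
    f^* i_{\alpha!} \matheur{F}_\alpha \simeq i_{\alpha!} f_\alpha^* \matheur{F}_\alpha,
\]
and combining this with the previous display proves the claim.

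There is no serious obstacle here; the only point requiring a mild check is that the various equivalences are natural in $\alpha$, so that the termwise base change assembles into an equivalence of diagrams and hence of colimits. This naturality is automatic from the compatibility of $f^* \dashv f_*$ with compositions (as recorded in Proposition~\ref{prop:def_f_*_f^*}) and from the functoriality of the base change isomorphism in the scheme $Y_\alpha \in \Sch_{/\matheur{Y}}$, which in turn uses that $X_\alpha = Y_\alpha \times_{\matheur{Y}} \matheur{X}$ is a scheme because $f$ is schematic.
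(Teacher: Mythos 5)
Your proof is correct and follows essentially the same route as the paper's: decompose $\matheur{F} \simeq \colim_\alpha i_{\alpha!}\matheur{F}_\alpha$ via Remark~\ref{rmk:colim_ev_ins_sheaves}, commute $f^*$ past the colimit by continuity, and apply the left-adjoint form of the base change in Proposition~\ref{prop:f_*_g^!_base_change} termwise. Your observation that the hypothesis should read $\matheur{F} \in \Shv(\matheur{Y})$ rather than $\Shv(\matheur{X})$ is also correct --- that is a typo in the statement.
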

\begin{proof}
	We have
	\[
		f^* \matheur{F} \simeq f^* \colim_\alpha i_{\alpha!} \matheur{F}_\alpha  \simeq \colim_\alpha f^* i_{\alpha!} \matheur{F}_\alpha \simeq \colim_\alpha i_{\alpha!} f_\alpha^* \matheur{F}_\alpha,
	\]
	where the first, second and third equivalences are due to~\ref{rmk:colim_ev_ins_sheaves}, continuity of $f^*$, and Proposition~\ref{prop:f_*_g^!_base_change} respectively.
\end{proof}

\subsubsection{} Note that the maps in Proposition~\ref{prop:f_*_g^!_base_change} above is not from some adjunction. Rather, it is an extra structure coming from the construction of $f_*$ and $f^*$.

\subsubsection{Base change for $f_!$ and $g^!$ for pseudo-smooth morphism $g$} We will now consider an analog of the smooth base change theorem in the world of prestacks. 

\begin{prop} \label{prop:pseudo_smooth_base_change}
	Consider the Cartesian square~\eqref{eq:cartesian_sq_base_change}, where $\matheur{Y}$ and $\matheur{Y}'$ are pseudo-Artin prestacks, and $g$ is pseudo-smooth. Then, for any $\matheur{F} \in \Shv(\matheur{X})$, we have a natural equivalence
	\[
		f_! g^! \matheur{F} \simeq g^! f_! \matheur{F}.
	\]
\end{prop}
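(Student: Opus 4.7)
The plan is to bootstrap from classical smooth base change for schemes by exploiting the colimit presentations afforded by the pseudo-Artin and pseudo-smooth hypotheses, in three reductions.

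First I would dispatch the base case where $\matheur{Y}, \matheur{Y}'$ are schemes $Y, Y'$ and $g: Y' \to Y$ is smooth, with $\matheur{X}$ an arbitrary prestack. Present $\matheur{X} \simeq \colim_{S \in \Sch_{/\matheur{X}}} S$; then $\matheur{X}' \simeq \colim_S S'$ with $S' := S \times_Y Y'$ a scheme. By Lemma~\ref{lem:compute_f_!_base_scheme}, $f_!\matheur{F} \simeq \colim_S f_{S!} \matheur{F}_S$ and $f_! g^! \matheur{F} \simeq \colim_S f'_{S!} (g^!\matheur{F})_{S'}$, where $f'_S: S' \to Y'$ is the base change of $f_S$. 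Continuity of $g^!$ (since $g$ is smooth between schemes, $g^! \simeq g^*[2d]$ is a left adjoint) gives $g^! f_! \matheur{F} \simeq \colim_S g^! f_{S!} \matheur{F}_S$; the classical smooth base change $g^! f_{S!} \simeq f'_{S!} g'^!_S$ holds termwise, and identifying $(g^!\matheur{F})_{S'} \simeq g'^!_S \matheur{F}_S$ via compatibility of $(-)^!$ with composition (as in Remark~\ref{rmk:colim_ev_ins_sheaves}) matches the two colimits.

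Next, to reduce the general case, I would use the pseudo-Artin structure to write $\matheur{Y} \simeq \colim_\alpha Y_\alpha$ with smooth transitions and base-change the Cartesian diagram over each $Y_\alpha$, reducing to the case $\matheur{Y} = Y$ a scheme (pseudo-smoothness of $g$ is preserved under this base change, since its colim-of-smooth-schemes presentation pulls back along $Y_\alpha \to \matheur{Y}$). Then pseudo-smoothness of $g$ gives $\matheur{Y}' \simeq \colim_\beta Y'_\beta$ with each $g_\beta: Y'_\beta \to Y$ smooth between schemes. Since $\Shv(\matheur{Y}') \simeq \lim_\beta \Shv(Y'_\beta)$, it suffices to compare both sides after $i_\beta^!$ for $i_\beta: Y'_\beta \to \matheur{Y}'$. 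On the right, $i_\beta^! g^! f_! \matheur{F} = g_\beta^! f_! \matheur{F}$, which by the base case equals $f_{\beta!} \tilde g_\beta^! \matheur{F}$, where $f_\beta: \matheur{X}_\beta \to Y'_\beta$ and $\tilde g_\beta: \matheur{X}_\beta \to \matheur{X}$ are the base-changed morphisms. On the left, using $\matheur{X}' \simeq \colim_\beta \matheur{X}_\beta$ and the fact that $f_!$ commutes with colimits, I would rewrite $f_! g^! \matheur{F} \simeq \colim_\beta i_{\beta!} f_{\beta!} \tilde g_\beta^! \matheur{F}$ via compatibility of $(-)_!$ with composition; applying $i_\beta^!$ and invoking Remark~\ref{rmk:colim_ev_ins_sheaves} then yields $f_{\beta!} \tilde g_\beta^! \matheur{F}$, provided the family $\{f_{\gamma!} \tilde g_\gamma^! \matheur{F}\}_\gamma$ is compatible along the transitions, which follows from smooth base change for schemes applied to those transition maps.

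The main obstacle is this final assembly: one must show that the term-wise equivalences over each $Y'_\beta$ (and over each $Y_\alpha$) glue into a homotopy-coherent equivalence of sheaves on $\matheur{Y}'$, which requires careful compatibility checks with the smooth transitions as well as verifying the continuity of the various shriek-pullbacks against the colimit presentations. The paper's warning in \S\ref{subsec:classes_of_prestacks_morphisms} that pseudo-smoothness is subtle for arbitrary morphisms between prestacks is precisely what forces one to stay within the pseudo-Artin world and verify compatibility term by term via Lemma~\ref{lem:compute_f_!_base_scheme}, rather than produce the equivalence by a single adjunction argument.
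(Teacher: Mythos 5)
Your proposal is correct and follows essentially the same route as the paper: the identical base case (schemes downstairs, smooth $g$, arbitrary prestack upstairs, proved via the colimit presentation of $\matheur{X}$, continuity of $g^!$, and classical smooth base change termwise) followed by a componentwise check over a smooth atlas of $\matheur{Y}'$. The paper packages your final assembly step --- that the termwise equivalences $f_{\beta!}\,\tilde g_\beta^!\matheur{F}$ glue because they form a family compatible with the smooth transition maps --- as Corollary~\ref{cor:pseudo_smooth_base_change_cor_pt-wise} together with the abstract Lemma~\ref{lem:family_limit_ptwise_adjunction} on families of pointwise adjoints, and performs the two reductions in the opposite order (first making $\matheur{Y}'$ a scheme while keeping $\matheur{Y}$ pseudo-Artin, which sidesteps having to justify restricting the statement along the pseudo-smooth map $\matheur{Y}'\times_{\matheur{Y}}Y_\alpha\to\matheur{Y}'$ before the proposition is available for it).
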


We will build up the result from a couple of special cases.

\begin{lem} \label{lem:special_case_pseudo_smooth_base_change}
	Proposition~\ref{prop:pseudo_smooth_base_change} holds in the case where $\matheur{Y}$ and $\matheur{Y}'$ are schemes, and $g$ is a smooth morphism.
\end{lem}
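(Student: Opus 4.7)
The idea is to reduce to the classical smooth base change theorem for schemes by writing $\matheur{X}$ as a colimit of schemes and exploiting the fact that both $f_!$ (as computed by Lemma~\ref{lem:compute_f_!_base_scheme}, since the target is a scheme) and the operation $g^!$ are determined by their values on schemes.

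The first step is to choose a presentation $\matheur{X} \simeq \colim_\alpha X_\alpha$ with $X_\alpha \in \Sch_{/\matheur{X}}$ (e.g.\ the tautological one indexed by $\Sch_{/\matheur{X}}$). Since colimits in $\PreStk$ commute with finite limits (both are computed objectwise on test schemes), the Cartesian diagram produces a compatible presentation $\matheur{X}' \simeq \colim_\alpha X'_\alpha$, where $X'_\alpha := X_\alpha \times_Y Y'$ is a scheme and sits in a Cartesian square of schemes
\[
\xymatrix{
    X'_\alpha \ar[r]^{g_\alpha} \ar[d]_{f'_\alpha} & X_\alpha \ar[d]^{f_\alpha} \\
    Y' \ar[r]^g & Y
}
\]
with $g_\alpha$ smooth (smoothness being stable under base change).

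Next, I would apply Lemma~\ref{lem:compute_f_!_base_scheme} to both sides. For the left-hand side, $f_! g^! \matheur{F} \simeq \colim_\alpha f'_{\alpha!} (g^! \matheur{F})_{\alpha}$, and by the definition of $\Shv(\matheur{X}')$ as a limit of $\Shv(X'_\alpha)$ along $!$-pullbacks, one has $(g^! \matheur{F})_\alpha \simeq g_\alpha^! \matheur{F}_\alpha$. For the right-hand side, $g^! f_! \matheur{F} \simeq g^! \colim_\alpha f_{\alpha!} \matheur{F}_\alpha \simeq \colim_\alpha g^! f_{\alpha!} \matheur{F}_\alpha$, using that $g^!$ is continuous. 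So it suffices to produce a compatible family of equivalences $f'_{\alpha!} g_\alpha^! \matheur{F}_\alpha \simeq g^! f_{\alpha!} \matheur{F}_\alpha$, which is exactly classical smooth base change between schemes.

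The remaining, and main, point is to verify that this chain of equivalences agrees with the tautological natural transformation $f_! g^! \to g^! f_!$ arising from adjunction. Rather than chasing units and counits by hand, I would invoke the correspondences functor~\eqref{eq:shv_sch_correspondences}: the Cartesian square of schemes above gives two morphisms in $\Corr(\Sch)_{\all;\all}$ from $X_\alpha$ to $Y'$ which are canonically isomorphic, and functoriality of $\Shv$ on correspondences produces the desired natural equivalence $f'_{\alpha!} g_\alpha^! \simeq g^! f_{\alpha!}$ as a coherent system in $\alpha$. Assembling over $\alpha$ yields the lemma. The only delicate point to keep in mind is the coherence of the base-change isomorphisms along the transition maps $X_\alpha \to X_\beta$, but this is precisely what the correspondences formalism is built to supply.
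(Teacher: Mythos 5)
Your argument is essentially the paper's own proof: both present $\matheur{X}$ as the tautological colimit of schemes over it, pull this presentation back to $\matheur{X}'$ by universality of colimits, compute both occurrences of $f_!$ via Lemma~\ref{lem:compute_f_!_base_scheme} together with continuity of $g^!$, and conclude by termwise classical smooth base change. The one inaccuracy is the appeal to~\eqref{eq:shv_sch_correspondences} for coherence: that functor encodes $v_*h^!$ rather than $v_!h^!$, so it does not directly produce the equivalences $g^!f_{\alpha!}\simeq f'_{\alpha!}g_\alpha^!$; the paper instead relies on the standard naturality in $T\in \Sch_{/\matheur{X}}$ of the classical smooth base change isomorphism, which suffices since the proposition only asserts the existence of a natural equivalence.
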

\begin{proof}
	We rename the base prestacks $\matheur{Y}$ and $\matheur{Y}'$ in~\eqref{eq:cartesian_sq_base_change} to $S$ and $S'$ respectively to remind us that these are schemes. Now, for any scheme $T\in \Sch_{/\matheur{X}}$, we have the following diagram, where all squares are Cartesian
	\[
	\xymatrix{
		\matheur{X}'\times_\matheur{X} T \ar@/_1pc/[dd]_{f_T} \ar[d]^{i_T} \ar[r]^>>>>>{g_T} & T \ar[d]_{i_T} \ar@/^1pc/[dd]^{f_T} \\
		\matheur{X}' \ar[d]^f \ar[r]^g & \matheur{X} \ar[d]_f \\
		S' \ar[r]^g & S
	}
	\]
	Note that since
	\[
		\matheur{X} \simeq \colim_{T\in \Sch_{/\matheur{X}}} T,
	\]
	we have
	\[
		\matheur{X}' \simeq \colim_{T\in \Sch_{/\matheur{X}}} \matheur{X}'\times_{\matheur{X}} T \simeq \colim_{T\in \Sch_{/\matheur{X}}} S'\times_S T. \teq \label{eq:pull_back_top_colimit_over_base}
	\]

	We thus have
	\begin{align*}
		g^! f_! \matheur{F} \simeq g^! \colim_{T \in \Sch_{/\matheur{X}}} f_{T!}i_T^! \matheur{F} \simeq \colim_{T\in \Sch_{/\matheur{X}}} g^! f_{T!} i_T^! \matheur{F} \simeq \colim_{T\in \Sch_{/\matheur{X}}} f_{T!} g_T^! i_T^! \matheur{F} \simeq \colim_{T\in \Sch_{/\matheur{X}}} f_{T!} i_T^! g^! \matheur{F} \simeq f_! g^! \matheur{F}.
	\end{align*}
	Here, the first, second, third, and last equivalences are due to Lemma~\ref{lem:compute_f_!_base_scheme}, continuity of $g^!$, the usual smooth base change theorem for schemes, and Lemma~\ref{lem:compute_f_!_base_scheme} and~\eqref{eq:pull_back_top_colimit_over_base}, respectively.
\end{proof}

\begin{cor} \label{cor:pseudo_smooth_base_change_cor_pt-wise}
	Proposition~\ref{prop:pseudo_smooth_base_change} holds in the case where $\matheur{Y}$ is a pseudo-Artin prestack and $\matheur{Y}'$ is a scheme. In other word, the functor $f_!$ could be computed value-wise for pseudo-smooth morphisms from a scheme to $\matheur{Y}$.
\end{cor}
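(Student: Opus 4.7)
I plan to adapt the proof of Lemma~\ref{lem:special_case_pseudo_smooth_base_change}, using the presentation $\matheur{X} \simeq \colim_{T \in \Sch_{/\matheur{X}}} T$ and reducing to classical smooth base change one term at a time. The essential geometric input coming from pseudo-smoothness of $g : S = \matheur{Y}' \to \matheur{Y}$ is that, for every $T \in \Sch_{/\matheur{X}}$ (regarded as an object of $\Sch_{/\matheur{Y}}$ via the composition $T \to \matheur{X} \to \matheur{Y}$), the fiber product $T' := T \times_\matheur{Y} S$ is in fact a scheme, smooth over $T$. Granted this schematicness, universality of colimits in $\PreStk$ gives $\matheur{X}' \simeq \colim_T T'$ as a colimit of schemes indexed by $\Sch_{/\matheur{X}}$, which is what makes the remainder of the argument go through.

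Assuming the schematicness step, I would then run the chain
\[
    g^! f_! \matheur{F} \simeq g^! \colim_T f_{T!} \matheur{F}_T \simeq \colim_T g^! f_{T!} \matheur{F}_T \simeq \colim_T f'_{T!} g_T^! \matheur{F}_T \simeq f_! g^! \matheur{F},
\]
where $f_T = f \circ i_T : T \to \matheur{Y}$, $f'_T : T' \to S$, and $g_T : T' \to T$. The first equivalence uses $\matheur{F} \simeq \colim_T i_{T!} \matheur{F}_T$ (Remark~\ref{rmk:colim_ev_ins_sheaves}) together with continuity of $f_!$ as a left adjoint; the second uses continuity of $g^!$; the third is term-by-term base change applied to each $T$, which by construction takes place in a Cartesian square of schemes with smooth bottom edge, so the smooth base change theorem for schemes applies; and the last is Lemma~\ref{lem:compute_f_!_base_scheme} applied to $f : \matheur{X}' \to S$, valid because $S$ is a scheme.

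The hard step is the third equivalence, and the difficulty is entirely in arranging for it to live inside schemes. A priori the square
\[
\xymatrix{
    T' \ar[r]^{g_T} \ar[d]_{f'_T} & T \ar[d]^{f_T} \\
    S \ar[r]^g & \matheur{Y}
}
\]
has a pseudo-Artin prestack in the bottom right, and the smooth base change theorem for schemes does not literally apply. The fix I have in mind is to use a pseudo-Artin presentation $\matheur{Y} \simeq \colim_\alpha Y_\alpha$, factor $T \to \matheur{Y}$ through some $Y_\beta$, and use pseudo-smoothness of $g$ to see that $S_\beta := Y_\beta \times_\matheur{Y} S$ is a scheme smooth over $Y_\beta$ with $T' \simeq T \times_{Y_\beta} S_\beta$; the resulting Cartesian square purely among schemes then carries the base change we need. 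Justifying this reduction---both the factorization through $Y_\beta$ and the schematicness and smoothness of $S_\beta$---is where one must carefully unpack the definition of pseudo-smoothness of a morphism from a scheme into a pseudo-Artin prestack, and this is the main obstacle.
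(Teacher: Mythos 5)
Your proposal diverges from the paper's proof at exactly the point you flag as ``the main obstacle,'' and that obstacle is a genuine gap rather than a technicality to be unpacked. The third equivalence in your chain requires the square with vertices $T'$, $T$, $S$, $\matheur{Y}$ to be (reducible to) a Cartesian square of \emph{schemes} with smooth bottom edge, and your proposed fix rests on two unproven claims: that $T' = T\times_{\matheur{Y}}S$ is a scheme smooth over $T$, and that $S_\beta = Y_\beta\times_{\matheur{Y}}S$ is a scheme smooth over $Y_\beta$. Neither follows from the definition of pseudo-smoothness. For a colimit presentation $\matheur{Y}\simeq\colim_\alpha Y_\alpha$ in $\PreStk$, the fiber product of two schemes over $\matheur{Y}$ is computed via the space of paths in the colimit of spaces $\colim_\alpha Y_\alpha(T)$, i.e.\ as a colimit over a zig-zag category; there is no reason for it to be representable, and even in good cases (e.g.\ $X^n\times_{\Sym X}X^n\simeq X^n\times\Sigma_n$) one only gets a scheme by exploiting special features of the diagram. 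Moreover, even when $T\to\matheur{Y}$ factors through some $Y_\beta$, one cannot replace $\matheur{Y}$ by $Y_\beta$ in the Cartesian square unless $Y_\beta\to\matheur{Y}$ is a monomorphism, which it typically is not. The paper's own remark in \S\ref{subsec:classes_of_prestacks_morphisms} warns precisely against this: pseudo-smoothness is not compatible with base change in general because the left Kan extension $\PreStk'_{\sm}\to\PreStk$ need not commute with finite limits.

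The paper's actual argument sidesteps all fiber products of schemes over prestacks. Since $\matheur{Y}$ is pseudo-Artin, one writes $\matheur{Y}\simeq\colim_{Y_\alpha\in\Sch_{\sm/\matheur{Y}}}Y_\alpha$, hence $\Shv(\matheur{Y})\simeq\lim_\alpha\Shv(Y_\alpha)$ and, by universality of colimits, $\Shv(\matheur{X})\simeq\lim_\alpha\Shv(\matheur{X}_\alpha)$ with $\matheur{X}_\alpha=\matheur{X}\times_{\matheur{Y}}Y_\alpha$ (which need only be a prestack). Lemma~\ref{lem:special_case_pseudo_smooth_base_change} --- whose proof only ever forms fiber products of schemes over the scheme $Y_\alpha$ --- says exactly that the term-wise left adjoints $f_{\alpha!}$ commute with the $!$-pullback transition functors along the smooth maps $Y_\alpha\to Y_\beta$. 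This is the hypothesis of Lemma~\ref{lem:family_limit_ptwise_adjunction}, which then assembles the $f_{\alpha!}$ into a left adjoint of $f^!$ on the limits that is by construction computed value-wise; uniqueness of adjoints identifies it with $f_!$. If you want to keep the spirit of your computation, the honest fix is to abandon the pointwise fiber products $T\times_{\matheur{Y}}S$ and instead phrase the gluing at the level of the limit diagrams of sheaf categories, which is precisely what Lemma~\ref{lem:family_limit_ptwise_adjunction} packages.
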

\begin{proof}
	Since $\matheur{Y}$ is a pseudo-Artin prestack, we have
	\[
		\matheur{Y} \simeq \colim_{Y_\alpha \in \Sch_{\sm/\matheur{Y}}} Y_\alpha.
	\]
	Lemma~\ref{lem:special_case_pseudo_smooth_base_change} then allows us to apply Lemma~\ref{lem:family_limit_ptwise_adjunction} to conclude.
\end{proof}

\begin{proof}[Proof of Proposition~\ref{prop:pseudo_smooth_base_change}]
	Proposition~\ref{prop:pseudo_smooth_base_change} is a now a direct consequence of Corollary~\ref{cor:pseudo_smooth_base_change_cor_pt-wise} since $f_!$ can be computed value-wise in $\Sch_{\sm/\matheur{Y}}$.
\end{proof}

We conclude the section with a corollary of the base change results that we have discussed so far. 

\begin{prop} \label{prop:devissage}
	Suppose we have a diagram of prestacks
	\[
	\xymatrix{
		\matheur{U} \ar[r]^j & \matheur{Y} & \ar[l]_i \matheur{Z}
	}
	\]
	where $\matheur{U}$ is an open sub-prestack and $\matheur{Z}$ its closed complement. Then, for any $\matheur{F} \in \Shv(\matheur{Y})$, we have the following exact triangle
	\[
		i_! i^! \matheur{F} \to \matheur{F} \to j_*j^! \matheur{F} \to \cdots
	\]
\end{prop}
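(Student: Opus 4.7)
The plan is to reduce everything to the classical open/closed dévissage triangle on schemes by checking exactness after pulling back to each test scheme. The two maps in the triangle are obtained formally: the map $i_!i^!\matheur{F}\to\matheur{F}$ is the counit of the adjunction $i_!\dashv i^!$, which exists unconditionally for any morphism of prestacks. For the second map, note that the open immersion $j$ is schematic and \etale, so by Proposition~\ref{prop:coincidence_f^!_f^*_etale} we have $j^!\simeq j^*$, and Proposition~\ref{prop:def_f_*_f^*} provides the adjunction $j^*\dashv j_*$ whose unit yields $\matheur{F}\to j_*j^!\matheur{F}$. The composition $i_!i^!\matheur{F}\to \matheur{F}\to j_*j^!\matheur{F}$ is declared to be nullhomotopic via the data of its pointwise vanishing below.

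The key reduction uses the limit presentation $\Shv(\matheur{Y})\simeq\lim_{S\in\Sch_{/\matheur{Y}}}\Shv(S)$ from~\eqref{eq:Shv_commutes_limits}, so a candidate triangle in $\Shv(\matheur{Y})$ is exact if and only if its image under the exact functor $s^!:\Shv(\matheur{Y})\to\Shv(S)$ is exact for every $s:S\to\matheur{Y}$. Fix such an $S$, and form the Cartesian diagrams
\[
\xymatrix{
    Z_S \ar[r]^{g_Z} \ar[d]_{i_S} & \matheur{Z} \ar[d]^i \\
    S \ar[r]^{s} & \matheur{Y}
}
\qquad
\xymatrix{
    U_S \ar[r]^{g_U} \ar[d]_{j_S} & \matheur{U} \ar[d]^j \\
    S \ar[r]^{s} & \matheur{Y}
}
\]
where $U_S\hookrightarrow S$ is open and $Z_S\hookrightarrow S$ is its closed complement (openness and closedness of a sub-prestack are preserved by base change to schemes).

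Next I would apply the relevant base change results. Since $i$ is a closed immersion, it is proper and in particular pseudo-proper, so Proposition~\ref{prop:pseudo_proper_base_change} gives $s^!i_!\simeq i_{S!}g_Z^!$; combined with the functoriality identification $g_Z^!i^!\simeq i_S^!s^!$, this yields $s^!(i_!i^!\matheur{F})\simeq i_{S!}i_S^!\matheur{F}_S$ where $\matheur{F}_S:=s^!\matheur{F}$. Similarly, $j$ being schematic lets us invoke Proposition~\ref{prop:f_*_g^!_base_change} to get $s^!j_*\simeq j_{S*}g_U^!$, and together with $g_U^!j^!\simeq j_S^!s^!$ this yields $s^!(j_*j^!\matheur{F})\simeq j_{S*}j_S^!\matheur{F}_S$. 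Under these identifications the candidate triangle becomes
\[
    i_{S!}i_S^!\matheur{F}_S \to \matheur{F}_S \to j_{S*}j_S^!\matheur{F}_S \to \cdots,
\]
which is the classical open/closed exact triangle for constructible sheaves on the scheme $S$, and is known to be exact in the sheaf theories recalled in \S\ref{subsec:sheaves_on_schemes}.

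The main obstacle I anticipate is checking that the two morphisms constructed abstractly at the prestack level (the counit for $i_!\dashv i^!$ and the unit for $j^*\dashv j_*$) actually correspond, under the base-change equivalences above, to the analogous unit/counit maps on $S$. This is a compatibility of adjunctions that should follow formally from the constructions in Proposition~\ref{prop:def_f_*_f^*} and the pointwise description of $f_!$ (Lemma~\ref{lem:compute_f_!_base_scheme}, or its pseudo-proper enhancement), but it does require writing out the natural transformations carefully. Once this compatibility is in place, the pointwise exactness assembles into an exact triangle in $\Shv(\matheur{Y})$ by the limit description.
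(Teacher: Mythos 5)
Your proposal is correct and follows essentially the same route as the paper: reduce to an arbitrary test scheme $S \to \matheur{Y}$ via the limit description of $\Shv(\matheur{Y})$, identify $s^!(i_!i^!\matheur{F})$ and $s^!(j_*j^!\matheur{F})$ using pseudo-proper base change (Proposition~\ref{prop:pseudo_proper_base_change}) and the $f_*$/$g^!$ base change (Proposition~\ref{prop:f_*_g^!_base_change}), and invoke the classical open/closed triangle on $S$. The only difference is one of emphasis: the paper defines the whole triangle as a compatible family over $\Sch_{/\matheur{Y}}$ (thereby sidestepping the unit/counit compatibility you flag as the main obstacle), whereas you build the maps globally first and check exactness pointwise; both are fine.
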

\begin{proof}
	For any $X \in \Sch_{/\matheur{Y}}$, we have the following diagram obtained by pulling back
	\[
	\xymatrix{
		\matheur{U}_X \ar[r]^{j_X} \ar[d]^{f_\matheur{U}} & X \ar[d]^f & \ar[l]_{i_X} \matheur{Z}_X \ar[d]^{f_{\matheur{Z}}} \\
		\matheur{U} \ar[r]^j & \matheur{Y} & \ar[l]_i \matheur{Z}
	}
	\]
	The existence of the triangle on $\Shv(\matheur{Y})$ is equivalent to the existence of a compatible family of triangles
	\[
		f^! i_! i^! \matheur{F} \to f^! \matheur{F} \to f^! j_* j^! \matheur{F} \to \cdots
	\]
	for any $X\in \Sch_{/\matheur{Y}}$. But this is equivalent to having the following exact triangle
	\[
		i_{X!} i_X^! f^! \matheur{F} \to f^! \matheur{F} \to j_{X*} j_X^! f^! \matheur{F} \to \cdots
	\]
	for each scheme $X \in \Sch_{/\matheur{Y}}$, by the base change results, Propositions~\ref{prop:f_*_g^!_base_change} and~\ref{prop:pseudo_proper_base_change}. But then we are done since this is just the usual devissage triangle for sheaves on schemes.
\end{proof}

\subsection{Correspondences} Consider the category $\CorrPreStkSchAll$ of correspondences in $\PreStk$
\[
\xymatrix{
	\matheur{M} \ar[d]_v \ar[r]^h & \matheur{X} \\
	\matheur{Y}
} \teq\label{eq:corr_prestk}
\]
where $v$ is schematic. 

Let
\[
	F: \Corr(\Sch)_{\all;\all} \to \CorrPreStkSchAll
\]
be the natural inclusion of functor. Then we can construct a functor
\[
	\Shv: \CorrPreStkSchAll \to \DGCat_{\pres, \cont} \teq\label{eq:shv_prestk_correspondences}
\]
by the procedure of taking Right Kan Extension of~\eqref{eq:shv_sch_correspondences} along $F$. 

Restricting to $\PreStk_{\iso;\all} \simeq \PreStk^\op$, we obtain a functor
\[
	\Shv: \PreStk^\op \to \DGCat_{\pres, \cont}
\]
which agrees with our construction of sheaves on prestacks above, due to~\cite[Thm. V.2.6.1.5]{gaitsgory_study_????}. 

Since~\eqref{eq:shv_prestk_correspondences} encodes base change isomorphism for $(-)^!$ and $(-)_*$, Proposition~\ref{prop:f_*_g^!_base_change} implies that the image of~\eqref{eq:corr_prestk} under~\eqref{eq:shv_prestk_correspondences} is precisely
\[
	v_*h^!: \Shv(\matheur{X}) \to \Shv(\matheur{Y}).
\]
In other words, the functor $(-)_*$ obtained from restricting~\eqref{eq:shv_prestk_correspondences} to $\CorrPreStkSchAll \simeq \PreStk$ agrees with the functor $(-)_*$ defined in Proposition~\ref{prop:def_f_*_f^*}.

\subsection{Monoidal structure} Recall that the functor $\Shv$ on schemes has a right lax symmetric monoidal structure (see \S\ref{subsec:sheaves_on_schemes}). In this section, we upgrade this structure to sheaves on prestacks, and then study its behavior with respect to $f^*$ and $f^!$.

\subsubsection{Right lax monoidal structure of $\Shv$} The structure on $\Shv$ we are after is a functor
\[
	\boxtimes: \Shv(\matheur{Y}_1) \otimes \Shv(\matheur{Y}_2) \to \Shv(\matheur{Y}_1 \times \matheur{Y}_2)
\]
where $\matheur{Y}_1$ and $\matheur{Y}_2$ are prestacks. Given $\matheur{F}_i \in \Shv(\matheur{Y}_i)$, we define
\[
	\matheur{F}_1 \boxtimes \matheur{F}_2 \in \Shv(\matheur{Y}_1 \times \matheur{Y}_2)
\]
be such that for any scheme $S$ equipped with a morphism $(f_1, f_2): S\to \matheur{Y}_1\times \matheur{Y}_2$, 
\[
	(f_1, f_2)^! \matheur{F}_1 \boxtimes \matheur{F}_2 = f_1^! \matheur{F}_1 \otimesshriek f_2^! \matheur{F}_2.
\]
Or in the notation of \S\ref{subsubsec:def_sheaves_prestacks},
\[
	(\matheur{F}_1 \boxtimes \matheur{F}_2)_{S, (f_1, f_2)} = (\matheur{F}_{1})_{S, f_1} \otimesshriek (\matheur{F}_2)_{S, f_2}.
\]

\subsubsection{} Since we define $\boxtimes$ using $\otimesshriek$, this monoidal struture is well-behaved with respect to $!$-pullback.
\begin{lem} \label{lem:f^!_boxtimes}
	Let $f_i: \matheur{X}_i \to \matheur{Y}_i$ be morphisms between prestacks, and let $\matheur{F}_i \in \Shv(\matheur{Y}_i)$, where $i\in \{1, 2\}$. Then, we have a natural equivalence
	\[
		(f_1 \times f_2)^! \matheur{F}_1\boxtimes \matheur{F}_2 \simeq f_1^! \matheur{F}_1 \boxtimes f_2^! \matheur{F}_2.
	\]
\end{lem}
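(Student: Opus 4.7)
The plan is to verify the asserted equivalence after $!$-pulling back to any test scheme $T \in \Sch_{/\matheur{X}_1 \times \matheur{X}_2}$, since by the very definition of sheaves on a prestack (\S\ref{subsubsec:def_sheaves_prestacks}) an equivalence in $\Shv(\matheur{X}_1 \times \matheur{X}_2)$ is the same datum as a compatible family of equivalences after restricting to all such $T$.

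So let $T$ be a scheme equipped with a morphism $(h_1, h_2): T\to \matheur{X}_1\times \matheur{X}_2$. First I would compute the left hand side: by functoriality of $(-)^!$, restricting $(f_1 \times f_2)^!(\matheur{F}_1 \boxtimes \matheur{F}_2)$ along $(h_1, h_2)$ is the same as restricting $\matheur{F}_1 \boxtimes \matheur{F}_2$ along $(f_1\circ h_1, f_2\circ h_2): T \to \matheur{Y}_1 \times \matheur{Y}_2$, which by the very definition of $\boxtimes$ equals
\[
(f_1 h_1)^! \matheur{F}_1 \otimesshriek (f_2 h_2)^! \matheur{F}_2.
\]
Next I would compute the right hand side: restricting $f_1^!\matheur{F}_1 \boxtimes f_2^!\matheur{F}_2$ along $(h_1, h_2)$ gives, again by definition of $\boxtimes$,
\[
h_1^!(f_1^! \matheur{F}_1) \otimesshriek h_2^!(f_2^! \matheur{F}_2).
\]
Functoriality of $(-)^!$ on prestacks gives canonical equivalences $(f_i h_i)^! \simeq h_i^! f_i^!$, and these produce the desired equivalence at the level of $T$.

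The remaining task is to ensure these equivalences assemble into a morphism in $\Shv(\matheur{X}_1\times\matheur{X}_2)$, i.e.\ are compatible under the transition maps in the limit $\Shv(\matheur{X}_1\times\matheur{X}_2) \simeq \lim_{T\in(\Sch_{/\matheur{X}_1\times\matheur{X}_2})^\op} \Shv(T)$. This compatibility is the one part that deserves care, but it is not really an obstacle: both constructions $(f_1\times f_2)^!(-\boxtimes -)$ and $(f_1^!-)\boxtimes (f_2^!-)$ are built from functorial operations in the sheaf theory on schemes (composition of $!$-pullbacks and the $\otimesshriek$ monoidal structure), and the equivalence we wrote down is natural in $T$ by the homotopy-coherent system of compatibilities built into the definition. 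Concretely, one can phrase the whole argument as the composite of two natural transformations of functors $(\Sch_{/\matheur{X}_1\times\matheur{X}_2})^\op \to \DGCatprescont$, each of which is defined via the right lax symmetric monoidal structure on $\Shv^!$ for schemes, and whose agreement value-wise follows from the discussion above.
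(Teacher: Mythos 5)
Your argument is correct and is precisely the verification the paper has in mind: the paper states this lemma without proof, remarking only that it holds because $\boxtimes$ is defined via $\otimesshriek$ of $!$-pullbacks, and your value-wise check on test schemes together with functoriality of $(-)^!$ is exactly that. Nothing further is needed.
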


\subsubsection{} As in the case of schemes, the diagonal map $\matheur{Y} \to \matheur{Y} \times \matheur{Y}$ equips $\Shv(\matheur{Y})$ with a symmetric monoidal structure. Namely, we have
\begin{align*}
	\otimesshriek: \Shv(\matheur{Y}) \otimes \Shv(\matheur{Y}) &\to \Shv(\matheur{Y}) \\
	(\matheur{F}, \matheur{G}) &\mapsto \matheur{F}\otimesshriek \matheur{G} := \delta^! (\matheur{F} \boxtimes \matheur{G}).
\end{align*}

It is immediately from the definition that this monoidal structure is compatible with the $!$-pullback functor.

\subsubsection{} We will now list a couple of situations where a \Kunneth{} formula holds.

\begin{lem} \label{lem:Kunneth_scheme_target}
	Let $f_i: \matheur{Y}_i \to S_i$ be morphisms from prestacks to schemes, and $\matheur{F}_i\in \Shv(\matheur{Y}_i)$ where $i\in \{1, 2\}$. Then, we have a natural equivalence
	\[
		(f_1\times f_2)_! (\matheur{F}_1\boxtimes \matheur{F}_2) \simeq f_{1!} \matheur{F}_1 \boxtimes f_{2!} \matheur{F}_2.
	\]
\end{lem}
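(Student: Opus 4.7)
The plan is to reduce to the scheme-level Künneth formula (which is part of the assumed six-functor formalism on $\Sch$) by writing each $\matheur{Y}_i$ as a colimit of schemes and applying Lemma~\ref{lem:compute_f_!_base_scheme} to compute all three instances of $(-)_!$ involved. Concretely, I would start by choosing presentations $\matheur{Y}_i \simeq \colim_{\alpha_i} Y_{i,\alpha_i}$ with $Y_{i,\alpha_i} \in \Sch_{/\matheur{Y}_i}$. Since $\PreStk = \Fun(\Sch^\op, \Spc)$ is a presheaf category, products distribute over colimits objectwise, so $\matheur{Y}_1 \times \matheur{Y}_2 \simeq \colim_{(\alpha_1, \alpha_2)} (Y_{1, \alpha_1} \times Y_{2, \alpha_2})$, and this exhibits $\matheur{Y}_1 \times \matheur{Y}_2$ as a colimit of schemes over $S_1 \times S_2$ to which Lemma~\ref{lem:compute_f_!_base_scheme} can be applied.

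Next, I would identify the restriction of $\matheur{F}_1 \boxtimes \matheur{F}_2$ to each $Y_{1,\alpha_1} \times Y_{2,\alpha_2}$. By Lemma~\ref{lem:f^!_boxtimes} applied to the pair of natural maps $i_{\alpha_i}: Y_{i,\alpha_i} \to \matheur{Y}_i$, we have
\[
    (i_{\alpha_1} \times i_{\alpha_2})^!(\matheur{F}_1 \boxtimes \matheur{F}_2) \simeq (\matheur{F}_1)_{\alpha_1} \boxtimes (\matheur{F}_2)_{\alpha_2},
\]
where $(\matheur{F}_i)_{\alpha_i} = i_{\alpha_i}^! \matheur{F}_i \in \Shv(Y_{i,\alpha_i})$. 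Combining this with Lemma~\ref{lem:compute_f_!_base_scheme} (applied to the map $f_1 \times f_2: \matheur{Y}_1 \times \matheur{Y}_2 \to S_1 \times S_2$), the left-hand side becomes
\[
    (f_1 \times f_2)_!(\matheur{F}_1 \boxtimes \matheur{F}_2) \simeq \colim_{(\alpha_1, \alpha_2)} (f_{1, \alpha_1} \times f_{2, \alpha_2})_!\bigl((\matheur{F}_1)_{\alpha_1} \boxtimes (\matheur{F}_2)_{\alpha_2}\bigr).
\]

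Now I would apply the ordinary Künneth formula for schemes (which is part of the six-functor formalism of \S\ref{subsec:sheaves_on_schemes}) term by term inside the colimit, rewriting it as
\[
    \colim_{(\alpha_1, \alpha_2)} \bigl(f_{1,\alpha_1!}(\matheur{F}_1)_{\alpha_1} \boxtimes f_{2,\alpha_2!}(\matheur{F}_2)_{\alpha_2}\bigr).
\]
The last step is to pull the colimit outside the two slots of $\boxtimes$: since $\boxtimes: \Shv(S_1) \otimes \Shv(S_2) \to \Shv(S_1 \times S_2)$ is a continuous functor in each variable (it factors through a functor of presentable DG categories), the double colimit splits as a product of colimits. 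Applying Lemma~\ref{lem:compute_f_!_base_scheme} once more in each variable identifies $\colim_{\alpha_i} f_{i,\alpha_i!}(\matheur{F}_i)_{\alpha_i}$ with $f_{i!}\matheur{F}_i$, yielding the desired equivalence.

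The only real subtlety is verifying the identification of $(\matheur{F}_1 \boxtimes \matheur{F}_2)$ restricted to each product $Y_{1,\alpha_1} \times Y_{2,\alpha_2}$ with $(\matheur{F}_1)_{\alpha_1} \boxtimes (\matheur{F}_2)_{\alpha_2}$; this is exactly what Lemma~\ref{lem:f^!_boxtimes} provides, and all other ingredients are routine invocations of Lemma~\ref{lem:compute_f_!_base_scheme} together with the scheme-level Künneth formula and continuity of the external tensor product. There is nothing delicate to worry about regarding pseudo-properness or smoothness here, since the only instances of $(-)_!$ being computed have scheme targets.
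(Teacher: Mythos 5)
Your proof is correct and is essentially the argument the paper intends: the paper's proof is a one-line appeal to ``the usual K\"unneth formula for schemes and the continuity of $f_!$,'' and your write-up simply spells out that reduction (colimit presentations, Lemma~\ref{lem:compute_f_!_base_scheme}, Lemma~\ref{lem:f^!_boxtimes}, termwise K\"unneth, and continuity of $\boxtimes$ in each variable). No substantive difference in approach.
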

\begin{proof}
	This comes directly from the usual \Kunneth{} formula for schemes, and the fact that $f_!$ is continuous.
\end{proof}

Using~\ref{prop:pseudo_proper_base_change}, this implies the following lemma.
\begin{lem} \label{lem:Kunneth_pseudo_proper}
	Let $f_i: \matheur{X}_i \to \matheur{Y}_i$ be pseudo-proper morphisms between prestacks, and let $\matheur{F}_i \in \Shv(\matheur{X}_i)$, where $i \in \{1, 2\}$. Then, we have a natural equivalence
	\[
		(f_1\times f_2)_!(\matheur{F}_1 \boxtimes \matheur{F}_2) \simeq f_{1!} \matheur{F}_1 \boxtimes f_{2!} \matheur{F}_2.
	\]
\end{lem}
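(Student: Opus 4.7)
The plan is to reduce to the scheme-level K\"unneth formula (Lemma~\ref{lem:Kunneth_scheme_target}) by combining pseudo-proper base change (Proposition~\ref{prop:pseudo_proper_base_change}) with the colimit description of $f_!$ over a scheme base (Lemma~\ref{lem:compute_f_!_base_scheme}).

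First, I construct a natural transformation $(f_1\times f_2)_!(\matheur{F}_1\boxtimes \matheur{F}_2) \to f_{1!}\matheur{F}_1\boxtimes f_{2!}\matheur{F}_2$. From the units $\matheur{F}_i \to f_i^! f_{i!}\matheur{F}_i$ together with Lemma~\ref{lem:f^!_boxtimes}, we obtain $\matheur{F}_1\boxtimes \matheur{F}_2 \to (f_1\times f_2)^!(f_{1!}\matheur{F}_1\boxtimes f_{2!}\matheur{F}_2)$, whose mate under the $((f_1\times f_2)_! \dashv (f_1\times f_2)^!)$-adjunction is the desired map. To check it is an equivalence, it suffices to test after $!$-pullback along an arbitrary morphism $(g_1,g_2): S\to \matheur{Y}_1\times \matheur{Y}_2$ with $S\in\Sch$. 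Form the base-changed prestacks $\matheur{X}'_i := S\times_{\matheur{Y}_i}\matheur{X}_i$ with maps $\tilde f_i: \matheur{X}'_i\to S$ and $\tilde g_i: \matheur{X}'_i\to \matheur{X}_i$; by pseudo-properness of $f_i$, each $\matheur{X}'_i$ is pseudo-proper over $S$, hence of the form $\matheur{X}'_i \simeq \colim_{\alpha_i} Z^i_{\alpha_i}$ with $Z^i_{\alpha_i}$ proper over $S$.

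Next, I analyze the two sides separately. On the RHS, Lemma~\ref{lem:f^!_boxtimes} together with the definition of $\otimesshriek$ yields $(g_1,g_2)^!(f_{1!}\matheur{F}_1\boxtimes f_{2!}\matheur{F}_2) \simeq g_1^! f_{1!}\matheur{F}_1 \otimesshriek g_2^! f_{2!}\matheur{F}_2$, and Proposition~\ref{prop:pseudo_proper_base_change} applied to each factor rewrites this as $\tilde f_{1!}\tilde g_1^!\matheur{F}_1 \otimesshriek \tilde f_{2!}\tilde g_2^!\matheur{F}_2$. Setting $\matheur{G}_i:=\tilde g_i^!\matheur{F}_i$ and using Lemma~\ref{lem:compute_f_!_base_scheme} together with continuity of $\otimesshriek$, this becomes
\[
	\colim_{\alpha_1,\alpha_2}\bigl(f^{1,\alpha_1}_!\matheur{G}^1_{\alpha_1}\otimesshriek f^{2,\alpha_2}_!\matheur{G}^2_{\alpha_2}\bigr),
\]
where $f^{i,\alpha_i}:Z^i_{\alpha_i}\to S$ is proper. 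On the LHS, the map $f_1\times f_2$ is pseudo-proper (since its base change to $S$ along $(g_1,g_2)$ is presented by $\colim_{\alpha_1,\alpha_2}(Z^1_{\alpha_1}\times_S Z^2_{\alpha_2})$, a colimit of schemes proper over $S$); hence Proposition~\ref{prop:pseudo_proper_base_change} and Lemma~\ref{lem:compute_f_!_base_scheme} together express $(g_1,g_2)^!(f_1\times f_2)_!(\matheur{F}_1\boxtimes \matheur{F}_2)$ as $\colim_{\alpha_1,\alpha_2} (f^{(\alpha_1,\alpha_2)})_! h_{\alpha_1,\alpha_2}^!(\matheur{F}_1\boxtimes \matheur{F}_2)$ with $f^{(\alpha_1,\alpha_2)}:Z^1_{\alpha_1}\times_S Z^2_{\alpha_2}\to S$.

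It then remains to match the terms of these two colimits. For fixed $(\alpha_1,\alpha_2)$, I factor the map $Z^1_{\alpha_1}\times_S Z^2_{\alpha_2} \to \matheur{X}_1\times \matheur{X}_2$ through $Z^1_{\alpha_1}\times Z^2_{\alpha_2}$, apply Lemma~\ref{lem:f^!_boxtimes} to rewrite the pullback of $\matheur{F}_1\boxtimes \matheur{F}_2$ as $\matheur{G}^1_{\alpha_1}\boxtimes \matheur{G}^2_{\alpha_2}$, and then use the scheme-level K\"unneth formula (Lemma~\ref{lem:Kunneth_scheme_target}) together with proper base change along $\diag_S$ to identify $(f^{(\alpha_1,\alpha_2)})_! h_{\alpha_1,\alpha_2}^!(\matheur{F}_1\boxtimes \matheur{F}_2)\simeq f^{1,\alpha_1}_!\matheur{G}^1_{\alpha_1}\otimesshriek f^{2,\alpha_2}_!\matheur{G}^2_{\alpha_2}$. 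The main obstacle I anticipate is the bookkeeping around the identification $\matheur{X}'_1\times_S \matheur{X}'_2 \simeq \colim_{\alpha_1,\alpha_2}(Z^1_{\alpha_1}\times_S Z^2_{\alpha_2})$ and the attendant pseudo-properness of $f_1\times f_2$; these are mild technical points that require commutation of the fibre product with the particular colimits at hand, but they are the crux needed to legitimately apply pseudo-proper base change on the LHS.
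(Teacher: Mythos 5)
Your argument is correct and is essentially an expanded version of the paper's own (very terse) proof, which likewise reduces to the scheme-level K\"unneth formula via pseudo-proper base change (Proposition~\ref{prop:pseudo_proper_base_change}) and continuity. The technical point you flag --- that $\matheur{X}'_1\times_S\matheur{X}'_2\simeq\colim_{\alpha_1,\alpha_2}(Z^1_{\alpha_1}\times_S Z^2_{\alpha_2})$, hence $f_1\times f_2$ is pseudo-proper --- does hold, since colimits of prestacks are universal and a product of schemes proper over $S$ is proper over $S$.
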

\begin{proof}
	This comes directly from the usual \Kunneth{} formula, Proposition~\ref{prop:pseudo_proper_base_change} and the continuity of $f_!$.
\end{proof}

\subsubsection{} There is another situation where a \Kunneth{} type formula holds. Let
\[
	F_i, G_i: \matheur{K} \to \Sch
\]
be functors, $\alpha_i: F_i \Rightarrow G_i$ be natural transformations, where $i\in \{1, 2\}$. Suppose that for any $m: k \to l$, the following diagram commutes
\[
\xymatrix{
	\Shv(F_i(l)) \ar[d]^{\alpha_{i, l!}} \ar[r]^{F_i(m)^!} & \Shv(F_i(k)) \ar[d]^{\alpha_{i, k!}} \\
	\Shv(G_i(l)) \ar[r]^{G_i(m)^!} & \Shv(G_i(k))
}
\]
Denote $\matheur{X}_i = \colim_{k\in \matheur{K}} F_i(k)$, $\matheur{Y}_i = \colim_{k\in \matheur{K}} G_i(k)$, and $f_i: \matheur{X}_i \to \matheur{Y}_i$ induced by $\alpha_i$.
\begin{lem} \label{lem:Kunneth_equivariant}
	In the situation described above, for any $\matheur{F}_i\in \Shv(\matheur{X}_i)$, we have
	\[
		(f_1 \times f_2)_!(\matheur{F}_1 \boxtimes \matheur{F}_2) \simeq f_{1!} \matheur{F}_1 \boxtimes f_{2!} \matheur{F}_2.
	\]
\end{lem}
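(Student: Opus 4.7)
The plan is to compute both sides of the claimed equivalence as colimits indexed by $\matheur{K} \times \matheur{K}$ and match them term-by-term via the scheme-level K\"unneth formula (Lemma~\ref{lem:Kunneth_scheme_target}). Let $\iota_{i,k} : F_i(k) \to \matheur{X}_i$ and $\jmath_{i,k} : G_i(k) \to \matheur{Y}_i$ denote the tautological structure maps into the colimits, so that $f_i \circ \iota_{i,k} = \jmath_{i,k} \circ \alpha_{i,k}$, and write $\matheur{F}_{i,k} := \iota_{i,k}^! \matheur{F}_i$.

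The first step is to upgrade the hypothesis to a ``base-change'' statement
\[
\jmath_{i,k}^! f_{i!} \matheur{F}_i \simeq (\alpha_{i,k})_! \matheur{F}_{i,k}
\]
for every $k \in \matheur{K}$. The family $\{\matheur{F}_{i,k}\}_k$ is $F_i(m)^!$-compatible by the very definition of a sheaf on $\matheur{X}_i$, and the hypothesis then implies that the family $\{(\alpha_{i,k})_! \matheur{F}_{i,k}\}_k$ is $G_i(m)^!$-compatible, hence descends to a sheaf $\matheur{G}_i \in \Shv(\matheur{Y}_i)$ satisfying $\jmath_{i,k}^! \matheur{G}_i \simeq (\alpha_{i,k})_! \matheur{F}_{i,k}$ and $\matheur{G}_i \simeq \colim_k (\jmath_{i,k})_! (\alpha_{i,k})_! \matheur{F}_{i,k}$ (see Remark~\ref{rmk:colim_ev_ins_sheaves}). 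On the other hand, continuity of $f_{i!}$ applied to $\matheur{F}_i \simeq \colim_k (\iota_{i,k})_! \matheur{F}_{i,k}$ yields
\[
f_{i!} \matheur{F}_i \simeq \colim_k (\jmath_{i,k} \circ \alpha_{i,k})_! \matheur{F}_{i,k} \simeq \colim_k (\jmath_{i,k})_! (\alpha_{i,k})_! \matheur{F}_{i,k} \simeq \matheur{G}_i,
\]
which gives the desired identification.

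For the left-hand side, since products in $\PreStk$ commute with colimits, $\matheur{X}_1 \times \matheur{X}_2 \simeq \colim_{(k_1,k_2)} F_1(k_1) \times F_2(k_2)$, and Lemma~\ref{lem:f^!_boxtimes} implies
\[
\matheur{F}_1 \boxtimes \matheur{F}_2 \simeq \colim_{(k_1,k_2)} (\iota_{1,k_1} \times \iota_{2,k_2})_! \bigl(\matheur{F}_{1,k_1} \boxtimes \matheur{F}_{2,k_2}\bigr).
\]
Applying $(f_1 \times f_2)_!$ (continuous), using the factorization $(f_1 \times f_2) \circ (\iota_{1,k_1} \times \iota_{2,k_2}) = (\jmath_{1,k_1} \times \jmath_{2,k_2}) \circ (\alpha_{1,k_1} \times \alpha_{2,k_2})$, and invoking Lemma~\ref{lem:Kunneth_scheme_target} for each morphism of schemes $\alpha_{1,k_1} \times \alpha_{2,k_2}$ delivers
\[
(f_1 \times f_2)_!(\matheur{F}_1 \boxtimes \matheur{F}_2) \simeq \colim_{(k_1,k_2)} (\jmath_{1,k_1} \times \jmath_{2,k_2})_! \bigl((\alpha_{1,k_1})_!\matheur{F}_{1,k_1} \boxtimes (\alpha_{2,k_2})_!\matheur{F}_{2,k_2}\bigr).
\]
For the right-hand side, the analogous colimit presentation of $\matheur{Y}_1 \times \matheur{Y}_2$ together with Lemma~\ref{lem:f^!_boxtimes} gives
\[
f_{1!}\matheur{F}_1 \boxtimes f_{2!}\matheur{F}_2 \simeq \colim_{(k_1,k_2)} (\jmath_{1,k_1} \times \jmath_{2,k_2})_! \bigl(\jmath_{1,k_1}^! f_{1!}\matheur{F}_1 \boxtimes \jmath_{2,k_2}^! f_{2!}\matheur{F}_2\bigr),
\]
and substituting the base change from the first step produces exactly the same expression.

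The main obstacle is the first step: extracting the non-tautological base-change isomorphism $\jmath_{i,k}^! f_{i!} \matheur{F}_i \simeq (\alpha_{i,k})_! \matheur{F}_{i,k}$ from a purely abstract commutativity hypothesis, in a setting where the square formed by $f_i, \iota_{i,k}, \alpha_{i,k}, \jmath_{i,k}$ is in general not Cartesian and neither of $f_i, \alpha_{i,k}$ is assumed pseudo-proper. Everything else is an application of Lemma~\ref{lem:f^!_boxtimes}, the classical K\"unneth formula, and continuity of $(-)_!$.
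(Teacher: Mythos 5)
Your proof is correct and follows the same two-step strategy as the paper: first establish that $f_{i!}$ and $(f_1\times f_2)_!$ are computed term-wise by the $\alpha_{i,k!}$'s, then expand everything as colimits and conclude by the K\"unneth formula for schemes. The only divergence is in how the term-wise statement $\jmath_{i,k}^!\,f_{i!}\matheur{F}_i\simeq(\alpha_{i,k})_!\matheur{F}_{i,k}$ --- which you rightly single out as the crux --- is obtained. The paper gets it in one stroke from Lemma~\ref{lem:family_limit_ptwise_adjunction}: the commutativity hypothesis says precisely that $\{\alpha_{i,k!}\}_k$ is a natural transformation between the two limit diagrams (with $!$-pullback transition functors) computing $\Shv(\matheur{X}_i)$ and $\Shv(\matheur{Y}_i)$, so the induced functor on limits is left adjoint to $\lim_k\alpha_{i,k}^!=f_i^!$ and hence \emph{is} $f_{i!}$, computed value-wise by construction. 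Your descent-plus-colimit argument is in effect an unwinding of that lemma; it works, but it leaves one point implicit: when you identify $f_{i!}\matheur{F}_i$ with $\matheur{G}_i$ by observing that both are colimits of the objects $(\jmath_{i,k})_!(\alpha_{i,k})_!\matheur{F}_{i,k}$, the two colimits are a priori indexed by different diagrams --- the transition maps of the first are $f_{i!}$ applied to counits of $F_i(m)_!\dashv F_i(m)^!$, while those of the second are counits of $G_i(m)_!\dashv G_i(m)^!$ --- and matching them is exactly the mate/base-change compatibility that the cited adjunction lemma packages. So the ``main obstacle'' you flag at the end is genuine but is resolved in the paper by an off-the-shelf categorical lemma rather than by any geometric input: no Cartesianness or pseudo-properness of the squares is needed, only the coherent commutation of the $\alpha_{i,k!}$'s with the $!$-pullbacks, which is the stated hypothesis.
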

\begin{proof}
	Lemma~\ref{lem:family_limit_ptwise_adjunction} implies that $f_{i!}$ and $(f_1\times f_2)_!$ can be computed term-wise using the $\alpha_{k!}$'s. But we have \Kunneth{} formula for schemes, so we are done.
\end{proof}

\subsubsection{} Since $\boxtimes$ for prestacks is defined via the shriek pullback, its interaction with the (usual) pullback is not as straightforward. However, we have the following relation.
\begin{prop} \label{prop:f^*_boxtimes}
	Let $f_i: \matheur{X}_i \to \matheur{Y}_i$ be schematic morphisms between prestacks, such that $\matheur{Y}_i$'s are pseudo-schemes, where $i \in \{1, 2\}$. Furthermore, let $\matheur{F}_i \in \Shv(\matheur{Y}_i)$. Then we have the following natural equivalence
	\[
		(f_1\times f_2)^* (\matheur{F}_1 \boxtimes \matheur{F}_2) \simeq f_1^* \matheur{F}_1 \boxtimes f_2^* \matheur{F}_2.
	\]
\end{prop}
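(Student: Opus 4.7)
The plan is to reduce both sides to a common colimit of scheme-level expressions and then match them termwise via a Künneth identity. Since each $\matheur{Y}_i$ is a pseudo-scheme, I would write $\matheur{Y}_i \simeq \colim_{\alpha_i} Y_{i,\alpha_i}$ with proper transition maps, and denote the structure morphisms by $k_{\alpha_i}: Y_{i,\alpha_i} \to \matheur{Y}_i$. Since $f_i$ is schematic, the base changes $X_{i,\alpha_i} := \matheur{X}_i \times_{\matheur{Y}_i} Y_{i,\alpha_i}$ are schemes, fitting into Cartesian squares
\[
\xymatrix{
    X_{i,\alpha_i} \ar[r]^{l_{\alpha_i}} \ar[d]_{f_{i,\alpha_i}} & \matheur{X}_i \ar[d]^{f_i} \\
    Y_{i,\alpha_i} \ar[r]^{k_{\alpha_i}} & \matheur{Y}_i
}
\]
with $\matheur{X}_i \simeq \colim_{\alpha_i} X_{i,\alpha_i}$. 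Since products commute with colimits in $\PreStk$, we have $\matheur{Y}_1 \times \matheur{Y}_2 \simeq \colim_{\alpha_1,\alpha_2} Y_{1,\alpha_1} \times Y_{2,\alpha_2}$, and $f_1 \times f_2$ is schematic as well.

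Applying Corollary~\ref{cor:f^*_as_colimit} to $f_1 \times f_2$ along this presentation, and then using Lemma~\ref{lem:f^!_boxtimes} together with the classical scheme-level compatibility of $*$-pullback and $\boxtimes$, the left-hand side becomes
\[
(f_1 \times f_2)^*(\matheur{F}_1 \boxtimes \matheur{F}_2) \simeq \colim_{\alpha_1,\alpha_2} (l_{\alpha_1} \times l_{\alpha_2})_! \bigl(f_{1,\alpha_1}^* k_{\alpha_1}^! \matheur{F}_1 \boxtimes f_{2,\alpha_2}^* k_{\alpha_2}^! \matheur{F}_2\bigr).
\]
For the right-hand side, applying Corollary~\ref{cor:f^*_as_colimit} to each $f_i$ and using the continuity of $\boxtimes$ in each variable gives
\[
f_1^* \matheur{F}_1 \boxtimes f_2^* \matheur{F}_2 \simeq \colim_{\alpha_1,\alpha_2} \bigl(l_{\alpha_1!} f_{1,\alpha_1}^* k_{\alpha_1}^! \matheur{F}_1\bigr) \boxtimes \bigl(l_{\alpha_2!} f_{2,\alpha_2}^* k_{\alpha_2}^! \matheur{F}_2\bigr).
\]

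Matching the two colimits termwise reduces the proposition to the Künneth identity $(l_{\alpha_1} \times l_{\alpha_2})_! (\matheur{G}_1 \boxtimes \matheur{G}_2) \simeq l_{\alpha_1!} \matheur{G}_1 \boxtimes l_{\alpha_2!} \matheur{G}_2$, which is exactly Lemma~\ref{lem:Kunneth_pseudo_proper}, provided each $l_{\alpha_i}$ is pseudo-proper. This last verification is the only nontrivial point: $k_{\alpha_i}: Y_{i,\alpha_i} \to \matheur{Y}_i$ is pseudo-proper because $\matheur{Y}_i$ is presented as a colimit of schemes with proper transition maps (so a base change $Y_{i,\alpha_i} \times_{\matheur{Y}_i} S$ is built from proper pullbacks of such transitions), and pseudo-properness is stable under base change, so $l_{\alpha_i}$ inherits it from $k_{\alpha_i}$. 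The naturality of all identifications is built into Corollary~\ref{cor:f^*_as_colimit}, Lemma~\ref{lem:f^!_boxtimes}, and the monoidal structure of $\Shv$, which closes the argument.
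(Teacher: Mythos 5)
Your proposal is correct and follows essentially the same route as the paper's proof: present each pseudo-scheme $\matheur{Y}_i$ as a colimit of schemes with proper transition maps, use Corollary~\ref{cor:f^*_as_colimit} to write both sides as colimits over the product index, and match terms via the scheme-level compatibility of $*$-pullback with $\boxtimes$ together with Lemma~\ref{lem:Kunneth_pseudo_proper}, whose applicability rests on the pseudo-properness of the structure maps $Y_{i,\alpha_i}\to\matheur{Y}_i$ and its stability under base change. The paper cites~\cite[Prop.~7.4.2]{gaitsgory_atiyah-bott_2015} for that pseudo-properness where you argue it directly, but the argument is otherwise the same.
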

\begin{proof}
	Since the $\matheur{Y}_i$'s are pseudo-schemes, we can write
	\[
		\matheur{Y}_i \simeq \colim_{\alpha} Y_{i\alpha}.
	\]
	where the $Y_{i\alpha}$'s are schemes, and all transition maps are proper. By~\cite[Prop. 7.4.2]{gaitsgory_atiyah-bott_2015}, we know that for any $\alpha$, the natural map $Y_{i\alpha} \to \matheur{Y}_i$ is pseudo-proper.
	
	Consider the following Cartesian diagram
	\[
	\xymatrix{
		X_{1\alpha} \times X_{2\beta} \ar[d]_{f_{1\alpha}\times f_{2\beta}} \ar[r]^{i_{1\alpha} \times i_{2\beta}} & \matheur{X}_1 \times \matheur{X}_2 \ar[d]^{f_1\times f_2} \\
		Y_{1\alpha} \times Y_{2\beta} \ar[r]^{i_{1\alpha} \times i_{2\beta}} & \matheur{Y}_1 \times \matheur{Y}_2
		}
	\]
	where all object appearing on the left column are schemes and all horizontal maps are pseudo-proper. We have
	\begin{align*}
		(f_1\times f_2)^* (\matheur{F}_1\boxtimes \matheur{F}_2)
		&\simeq \colim_{\alpha, \beta} (i_{1\alpha} \times i_{2\beta})_! (f_{1\alpha} \times f_{2\beta})^* (\matheur{F}_{1\alpha} \boxtimes \matheur{F}_{2\beta}) \\
		&\simeq \colim_{\alpha, \beta} (i_{1\alpha} \times i_{2\beta})_! (f_{1\alpha}^* \matheur{F}_{1\alpha} \boxtimes f_{2\beta}^* \matheur{F}_{2\beta}) \\
		&\simeq \colim_{\alpha, \beta} i_{1\alpha!} f_{1\alpha}^* \matheur{F}_{1\alpha} \boxtimes i_{2\beta!}f_{2\beta}^* \matheur{F}_{2\beta} \\
		&\simeq \colim_\alpha i_{1\alpha!} f^*_{1\alpha} \matheur{F}_{1\alpha} \boxtimes \colim_\beta i_{2\beta!}f_{2\beta}^* \matheur{F}_{2\beta} \\
		&\simeq f_1^* \matheur{F}_1 \boxtimes f_2^* \matheur{F}_2.
	\end{align*}
	
	Here, the first and third equivalences are due to Corollary~\ref{cor:f^*_as_colimit} and Lemma~\ref{lem:Kunneth_pseudo_proper} respectively.
\end{proof}

\subsection{Functor compositions} Consider the following Cartesian square of schemes
\[
\xymatrix{
	X_T \ar[r]^g \ar[d]_f & X \ar[d]^f \\
	T \ar[r]^g & S
}
\]
For any $\matheur{F} \in \Shv(X_T)$, we have a natural map 
\[
	g_! f_* \matheur{F} \to f_* g_! \matheur{F}. \teq \label{eq:nat_transf_square_composition_*_!}
\]
Indeed, by adjunction, giving such map is the same as giving the following map
\[
	f_* \matheur{F} \to g^! f_* g_! \matheur{F} \simeq f_* g^! g_! \matheur{F}.
\]
This map is $f_*$ applied to the unit of the $g_! \dashv g^!$ adjunction.

The following result is clear for schemes, since the two pushforward functors agree when the map is proper.
\begin{lem} \label{lem:composition_*_!_square_schemes}
	When either $f$ and $g$ is proper, the natural map at~\eqref{eq:nat_transf_square_composition_*_!} is an equivalence.
\end{lem}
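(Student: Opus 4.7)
The plan is to dispose of the two cases separately, in each case using that properness forces the two pushforward functors to agree.

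First, assume the vertical map $f$ is proper. Then the scheme-level instance of Proposition~\ref{prop:coincidence_f_*_f_!_proper} supplies a canonical equivalence $f_* \simeq f_!$ (on both $\Shv(X_T)$ and $\Shv(X)$), under which the natural transformation $g_! f_* \to f_* g_!$ transports to a candidate map $g_! f_! \to f_! g_!$. Since the square commutes, the two compositions $g \circ f$ and $f \circ g$ coincide as morphisms $X_T \to S$, so the functoriality of $(-)_!$ on $\Sch$ yields a canonical equivalence $g_! f_! \simeq (g\circ f)_! = (f\circ g)_! \simeq f_! g_!$. The task is to check that the adjunction-defined map agrees with this functoriality equivalence; this follows from a short diagram chase that traces the unit of $g_! \dashv g^!$ through the base change isomorphism $f_* g^! \simeq g^! f_*$ (which under $f_* \simeq f_!$ is the proper base change theorem for schemes) and takes mates.

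Second, assume the horizontal map $g$ is proper. Then $g_! \simeq g_*$ by the same result, and under this identification the natural transformation becomes a candidate map $g_* f_* \to f_* g_*$. The functoriality of $(-)_*$ on $\Sch$ provides a canonical equivalence $g_* f_* \simeq (g\circ f)_* = (f\circ g)_* \simeq f_* g_*$, and an analogous adjunction chase matches the two.

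The one genuine point to verify, in either case, is the compatibility of the adjunction-defined transformation~\eqref{eq:nat_transf_square_composition_*_!} with the functoriality equivalence supplied by $g\circ f = f\circ g$. The cleanest way to handle this is to invoke the correspondence-category formalism of \S\ref{subsubsec:base_change_correspondences_schemes}: the functor $\Shv\colon \Corr(\Sch)_{\all;\all} \to \DGCatprescont$ packages the unit and counit of $g_! \dashv g^!$, the base change isomorphism $g^! f_* \simeq f_* g^!$, and the compositions of $(-)_*$ and of $(-)_!$ into a single coherent datum, forcing the two maps into which~\eqref{eq:nat_transf_square_composition_*_!} splits to agree under the properness identifications. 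There is no substantive obstacle beyond this coherence bookkeeping.
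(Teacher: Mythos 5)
Your proof is correct and follows essentially the same route as the paper, which justifies the lemma in one line by noting that the two pushforward functors agree for proper morphisms and hence both sides compute the pushforward along the common composite $g\circ f = f\circ g$. The coherence check you isolate (that the adjunction-defined map matches the functoriality equivalence) is the right point to worry about, and your appeal to the correspondence formalism of \S\ref{subsubsec:base_change_correspondences_schemes} handles it at least as carefully as the paper does.
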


In this section, we will prove statements with the same flavor as the one above in the case of prestacks. 

\begin{prop} \label{prop:composition_*_!_square_prestk}
	Consider the following Cartesian diagram
	\[
	\xymatrix{
		\matheur{X}' \ar[d]_{f'} \ar[r]^{g'} & \matheur{X} \ar[d]^f \\
		\matheur{Y}' \ar[r]^g & \matheur{Y}
	}
	\]
	where $f$, and hence $f'$, is schematic. Then, for any $\matheur{F} \in \matheur{X}'$, we have a natural map
	\[
		n: g_! f'_* \matheur{F} \to f_* g'_! \matheur{F}.
	\]	
	
	Moreover, this map is an equivalence if one of the following conditions is satisfied:
	\begin{enumerate}[\quad (i)]
		\item $f$ is proper.
		\item $g$ is finitary pseudo-proper.
	\end{enumerate}
\end{prop}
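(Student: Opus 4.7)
The plan is to first construct the natural transformation $n$ via the standard adjunction-plus-base-change recipe, and then dispatch the two cases by reducing to scheme-level statements already proved. For the construction, by the adjunction $g_! \dashv g^!$, giving a map $g_! f'_* \matheur{F} \to f_* g'_! \matheur{F}$ is equivalent to giving a map $f'_* \matheur{F} \to g^! f_* g'_! \matheur{F}$. Since $f$ is schematic, Proposition~\ref{prop:f_*_g^!_base_change} provides a natural equivalence $g^! f_* \simeq f'_* g'^!$, so the target rewrites as $f'_* g'^! g'_! \matheur{F}$, and I take $n$ to correspond to $f'_*$ applied to the unit $\matheur{F} \to g'^! g'_! \matheur{F}$ of the $g'_! \dashv g'^!$ adjunction.

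For case (i), properness is preserved under base change, so $f'$ is also proper, and Proposition~\ref{prop:coincidence_f_*_f_!_proper} yields natural equivalences $f_* \simeq f_!$ and $f'_* \simeq f'_!$. Under these identifications, $n$ becomes a natural map $g_! f'_! \matheur{F} \to f_! g'_! \matheur{F}$, and I would identify it with the canonical equivalence $g_! f'_! \simeq (gf')_! \simeq (fg')_! \simeq f_! g'_!$ arising from $gf' = fg'$ and the functoriality of $(-)_!$. Unwinding via the pseudo-proper base change $g^! f_! \simeq f'_! g'^!$ of Proposition~\ref{prop:pseudo_proper_base_change} (in place of the one for $f_*$), both the mate of $n$ and the mate of the canonical equivalence collapse to $f'_!$ applied to the unit $\matheur{F} \to g'^! g'_! \matheur{F}$, so $n$ is an equivalence.

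For case (ii), I would check that $n$ is an equivalence by $!$-pulling back along every $h: T \to \matheur{Y}$ with $T$ a scheme. Since pseudo-properness and its finitary variant are stable under base change, the morphism $g' = g \times_{\matheur{Y}} \matheur{X}$ is also finitary pseudo-proper. Applying Proposition~\ref{prop:f_*_g^!_base_change} (to the schematic $f$ and $f'$) and Proposition~\ref{prop:pseudo_proper_base_change} (to the pseudo-proper $g$ and $g'$), the pullback $h^!(n)$ becomes a map of the form
\[
    g_{T!}(f'_T)_* \matheur{G} \longrightarrow (f_T)_* g'_{T!}\matheur{G}
\]
in $\Shv(T)$, where $\matheur{G}$ is the induced pullback of $\matheur{F}$. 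Here $T$ and $\matheur{X}_T$ are schemes; finitary pseudo-properness gives $\matheur{Y}'_T \simeq \colim_\alpha Z_\alpha$ as a \emph{finite} colimit of schemes proper over $T$, and correspondingly $\matheur{X}'_T \simeq \colim_\alpha W_\alpha$ with $W_\alpha := Z_\alpha \times_T \matheur{X}_T$ proper over $\matheur{X}_T$. Lemma~\ref{lem:compute_f_!_base_scheme} rewrites $g_{T!}$ and $g'_{T!}$ as these colimits; Proposition~\ref{prop:f_*_g^!_base_change} commutes $(f'_T)_*$ past each $i_\alpha^!$; and Lemma~\ref{lem:composition_*_!_square_schemes} applies at each stratum because $Z_\alpha \to T$ is proper. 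The final step is to pull $(f_T)_*$ out of the colimit, which is where finitariness is essential: as a right adjoint between stable DG-categories, $(f_T)_*$ is exact and so preserves \emph{finite} colimits, but not arbitrary ones.

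The hard part will be the coherence check in case (i): verifying that the $n$ defined by the adjunction--base change recipe actually matches the canonical equivalence under the identifications $f_* \simeq f_!$ and $f'_* \simeq f'_!$. This reduces to a compatibility between the unit of $g'_! \dashv g'^!$, the base change equivalence $g^! f_* \simeq f'_* g'^!$ (Proposition~\ref{prop:f_*_g^!_base_change}), and the $!$-analogue $g^! f_! \simeq f'_! g'^!$ (Proposition~\ref{prop:pseudo_proper_base_change}); all such compatibilities are packaged into the correspondence-category formulation of $\Shv$ recalled around~\eqref{eq:shv_prestk_correspondences}, and I would invoke that framework rather than chase them by hand.
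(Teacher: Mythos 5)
Your proposal is correct and follows essentially the same route as the paper: the map $n$ is produced by the same adjunction-plus-unit recipe, case (i) is handled by replacing $f_*$ with $f_!$ via Proposition~\ref{prop:coincidence_f_*_f_!_proper} (you are merely more explicit than the paper, which dismisses the identification of $n$ with the canonical equivalence $g_!f'_! \simeq f_!g'_!$ as ``easy''), and case (ii) proceeds by the identical reduction to schemes via Propositions~\ref{prop:f_*_g^!_base_change} and~\ref{prop:pseudo_proper_base_change}, followed by writing $g_!$ as a finite colimit over proper schemes, commuting $f_*$ through the finite colimit, and applying Lemma~\ref{lem:composition_*_!_square_schemes} termwise. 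No substantive differences.
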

\begin{proof}
	The existence of the natural map is seen via adjunction as in~\eqref{eq:nat_transf_square_composition_*_!}. Moreover, the fact that $n$ is an equivalence in the first case is easy, since by Proposition~\ref{prop:coincidence_f_*_f_!_proper}, we know that $f_* \simeq f_!$.
	
	For the second case, we first observe that to show that $n$ is an equivalence, it suffices to check after pulling back to an arbitrary scheme $S\in \Sch_{/\matheur{Y}}$. Proposition~\ref{prop:f_*_g^!_base_change} then allows us to reduce to the case where $\matheur{X}$ and $\matheur{Y}$ are schemes. Consider the following diagram where all squares are Cartesian
	\[
	\xymatrix{
		T'_\alpha \ar[d]^{f_\alpha} \ar[r]_{i'_\alpha} \ar@/^2ex/[rr]^{g'_\alpha} & \matheur{X}' \ar[d]^{f'} \ar[r]_g & X \ar[d]^f \\
		T_\alpha \ar[r]^{i_\alpha} \ar@/_2ex/[rr]_{g_\alpha} & \matheur{Y}' \ar[r]^g & Y
	}
	\]
	where $T_\alpha$ and hence, also $T'_\alpha$, is a scheme. For any $\matheur{F} \in \Shv(\matheur{X}')$, we conclude
	\begin{align*}
		f_* g_! \matheur{F}
		&\simeq f_* \colim_\alpha g'_{\alpha!} i'^!_\alpha \matheur{F}
		\simeq \colim_\alpha f_* g'_{\alpha!} i'^!_\alpha \matheur{F}
		\simeq \colim_\alpha g_{\alpha!} f_{\alpha*} i'^!_\alpha \matheur{F}
		\simeq \colim_\alpha g_{\alpha!} i_{\alpha}^! f'_* \matheur{F} \simeq g_! f'_* \matheur{F}.
	\end{align*}
	Here, the second equivalence is due to the fact that $f_*$ commutes with finite colimits. The third and fourth equivalences are due to Lemma~\ref{lem:composition_*_!_square_schemes}, and Proposition~\ref{prop:f_*_g^!_base_change} respectively.
\end{proof}

\begin{prop} \label{prop:composition_!_*}
	Let $f: \matheur{U} \to \matheur{Z}$ and $g: \matheur{Z} \to \matheur{X}$ be morphisms of prestacks such that $g$ is finitary pseudo-proper, $f$ and $h=g\circ f$ are schematic. Then we have the following natural equivalence
	\[
		g_! \circ f_* \simeq (g\circ f)_* = h_*.
	\]
\end{prop}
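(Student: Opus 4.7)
The plan is to reduce to the case where the base $\matheur{X}$ is a scheme, and then compute both sides as the same finite colimit indexed by a presentation of $\matheur{Z}$ as a colimit of schemes proper over the base.

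\emph{Reduction step.} Since $h$ is schematic, the functor $h_*$ is defined via Proposition~\ref{prop:def_f_*_f^*}, and by Proposition~\ref{prop:f_*_g^!_base_change} it can be checked value-wise after $!$-pullback to schemes $T \in \Sch_{/\matheur{X}}$. On the other hand, since $g$ is (finitary) pseudo-proper, Proposition~\ref{prop:pseudo_proper_base_change} lets us commute $i_T^!$ past $g_!$; a second application of Proposition~\ref{prop:f_*_g^!_base_change} then commutes $i_{\matheur{Z}_T}^!$ past $f_*$. Checking the equivalence $g_! f_* \simeq h_*$ after $i_T^!$ therefore reduces, via base change, to the analogous statement for the induced maps $f_T: \matheur{U}_T \to \matheur{Z}_T$ and $g_T: \matheur{Z}_T \to T$. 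In other words, we may assume $\matheur{X} = X$ is a scheme.

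\emph{Main computation.} Since $g$ is finitary pseudo-proper over $X$, write $\matheur{Z} \simeq \colim_\alpha Z_\alpha$ as a finite colimit of schemes $Z_\alpha$ proper over $X$. Because $f$ is schematic, pulling back yields schemes $U_\alpha := \matheur{U} \times_\matheur{Z} Z_\alpha$ together with proper-over-$Z_\alpha$ structure maps; using universality of colimits in $\Spc$, we get $\matheur{U} \simeq \colim_\alpha U_\alpha$ in prestacks (a finite colimit). Denote the canonical maps $i_\alpha: Z_\alpha \to \matheur{Z}$ and $j_\alpha: U_\alpha \to \matheur{U}$, and set $h_\alpha = h\circ j_\alpha: U_\alpha \to X$. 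Then, for $\matheur{F} \in \Shv(\matheur{U})$, I would compute the left-hand side as
\[
    g_! f_* \matheur{F} \simeq \colim_\alpha g_{\alpha!}\, i_\alpha^! f_* \matheur{F} \simeq \colim_\alpha g_{\alpha!}\, f_{\alpha*}\, j_\alpha^! \matheur{F} \simeq \colim_\alpha h_{\alpha*}\, j_\alpha^! \matheur{F},
\]
using Lemma~\ref{lem:compute_f_!_base_scheme}, Proposition~\ref{prop:f_*_g^!_base_change} for the schematic $f$, and finally $g_\alpha$ being proper so that $g_{\alpha!} \simeq g_{\alpha*}$ (together with compositionality of $(-)_*$).

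\emph{Matching the right-hand side.} For $h_*$, note that by the remark after Proposition~\ref{prop:coincidence_f_*_f_!_proper}, each $j_\alpha: U_\alpha \to \matheur{U}$ is proper (it is schematic and pseudo-proper, the latter by~\cite[Prop. 7.4.2]{gaitsgory_atiyah-bott_2015}). Hence $j_{\alpha!} \simeq j_{\alpha*}$, and $h_* \circ j_{\alpha*} \simeq h_{\alpha*}$. Writing $\matheur{F} \simeq \colim_\alpha j_{\alpha!} j_\alpha^! \matheur{F}$ (a \emph{finite} colimit, cf.~\ref{rmk:colim_ev_ins_sheaves}) and applying $h_*$ gives
\[
    h_* \matheur{F} \simeq \colim_\alpha h_* j_{\alpha!} j_\alpha^! \matheur{F} \simeq \colim_\alpha h_{\alpha*} j_\alpha^! \matheur{F},
\]
matching the expression obtained above.

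\emph{Main obstacle.} The single nontrivial point in the argument is the commutation of $h_*$ with the finite colimit $\colim_\alpha j_{\alpha!} j_\alpha^! \matheur{F}$. This is where the finitary hypothesis is essential: $h_*$, being a right adjoint between presentable stable $\infty$-categories, is automatically exact, so it preserves finite colimits — but it would not commute with an arbitrary colimit. Packaging these equivalences into a natural transformation (rather than a pointwise isomorphism) can be handled uniformly by phrasing both sides as the $!_*$-image of a correspondence, in the spirit of the functor from $\CorrPreStkSchAll$ discussed in \S\ref{subsubsec:base_change_correspondences_schemes}, so that the coherences come for free.
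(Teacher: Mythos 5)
Your argument is correct and follows essentially the same route as the paper's (sketched) proof: base change to reduce to the case where $\matheur{X}$, and hence $\matheur{U}$, is a scheme, present $\matheur{Z}$ as a finite colimit of schemes proper over the base, and run the colimit manipulation of Proposition~\ref{prop:composition_*_!_square_prestk}, with the finitary hypothesis entering exactly where $h_*$ must commute with the (finite) colimit. The paper only sketches this; your version supplies the intended details, including the observation that each $j_\alpha$ is proper so that $j_{\alpha!}\simeq j_{\alpha*}$.
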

\begin{proof}[Proof (Sketch)]
	The proof is similar to the above, so we will give a sketch. By base changing over a scheme $X \in \Sch_{/\matheur{X}}$, Propositions~\ref{prop:f_*_g^!_base_change} and~\ref{prop:pseudo_proper_base_change} allow us to reduce to the case where $\matheur{U}$ and $\matheur{X}$ are schemes. Now, we present $\matheur{Z}$ as a finite colimit of schemes that are proper over $\matheur{X}$ and then run a similar manipulation as in Proposition~\ref{prop:composition_*_!_square_prestk} to conclude.
\end{proof}

\section{$E_X$-algebras} \label{sec:E_X_algebras}
Factorizable sheaves, or $E_X$-algebras, are the algebro-geometric avatar of $E_n$-algebras in topology. The goal of this section is to provide the construction of free $E_X$-algebras. We will start with a quick review of $E_n$-algebras in the topological setting. An interpretation of the construction of free $E_n$-algebras in such a way that motivates the actual construction of free $E_X$-algebras will also be discussed. Our discussion of the topological setting will be impressionistic in nature, and so we will not try to be very precise. We refer the curious reader to~\cite{lurie_higher_2016} and~\cite{ayala_factorization_2012} for a detailed discussion.

Even though the construction of free $E_X$-algebras is straightforward, it is somewhat technical to show that this construction actually has the correct properties. We will extend the notion of the chiral monoidal structure in~\cite{francis_chiral_2011} to $\Conf X$ and use it as an organizing tool to formulate various formal properties of our construction. These properties will then be proved by extensive use of the base change results discussed in the previous section. 

\subsection{A quick review of $E_n$-algebras} \label{subsec:quick_review_E_n}
This subsection merely provides the topological motivation for the construction of free $E_X$-algebras in algebraic geometry. The reader who's only interested in the actual construction can safely skip this part.

\subsubsection{} Let $\Disk_n$ be the $\infty$-category whose objects are disjoint union of $n$-dimensional discs and morphisms are embeddings.  $\Disk_n$ is equipped with a natural symmetric monoidal structure given by disjoint union $\sqcup$. For sake of concreteness, we will work with the stable $\infty$-category $\Vect_\Lambda$ of chain complexes in $\Lambda$-vectors spaces, where $\Lambda$ is some ring. Note that the homotopy category of this is the usual derived category of $\Lambda$-vector spaces. This category is also equipped with the $\otimes$-monoidal structure. An $E_n$-algebra object in $\Vect_\Lambda$ is a symmetric monoidal functor
\[
	\matheur{A}: \Disk_n \to \Vect_\Lambda.
\]
We denote by $E_n(\Vect_\Lambda)$ the category of $E_n$-algebras in $\Vect_\Lambda$, i.e.
\[
	E_n(\Vect_\Lambda) = \Fun^\otimes(\Disk_n, \Vect_\Lambda).
\]

\subsubsection{} Let $\oblv$ denote the forgetful functor
\[
	\oblv: E_n(\Vect_\Lambda) \to \Vect_\Lambda.
\]
Then, it is known that the forgetful functor $E_n(\Vect_\Lambda^\otimes)$ admits a left adjoint, which is computed by
\[
	\Free_{E_n}(M) = \bigoplus_{k\geq 1} C_*(\Conf_k \mathbb{R}^n, \Lambda) \otimes_{\Sigma_k} M^{\otimes k}, \teq\label{eq:formula_free_E_n}
\]
where $\Sigma_k$ is the symmetric group on $k$ letters. Moreover,
\[
	\Conf_k \mathbb{R}^n = \oversetsupscript{(\mathbb{R}^n)}{\circ}{k} /\Sigma_k
\]
is the \emph{unordered} configuration space of $k$ points on $\mathbb{R}^n$. Here, we use $\oversetsupscript{X}{\circ}{n}$ to denote the open subspace of $X^n$ obtained by removing the \emph{fat diagonal}. 

\subsubsection{} Formula~\eqref{eq:formula_free_E_n} could be reinterpreted in a more conceptual way. Let $\fSet^\iso$ denote the category whose objects are non-empty finite sets and morphisms are isomorphisms of finite sets. This category is equipped with the disjoint union symmetric monoidal structure. It is easy to see that
\[
	T: \Vect_\Lambda \to \Fun^\otimes(\fSet^\iso, \Vect_\Lambda),
\]
defined by
\[
	M \mapsto (I \mapsto M^{\otimes I}),
\]
is an equivalence of categories. Indeed, its inverse is given by evaluating at the singleton set.

\subsubsection{} Let
\[
	i: \fSet^\iso \to \Disk_n
\]
be the obvious inclusion of categories. Then we have the following pair of adjoint functors
\[
	\LKE_i: \Fun(\fSet^\iso, \Vect_\Lambda) \rightleftarrows \Fun(\Disk_n, \Vect_\Lambda): -\circ i
\]
where $\LKE_i$ is the left Kan extension functor (see \S\ref{subsec:LKE}). 

It could be check that this induces the a pair of adjoint functors
\[
	\LKE_i: \Fun^\otimes(\fSet^\iso, \Vect_\Lambda) \rightleftarrows \Fun^\otimes(\Disk_n, \Vect_\Lambda): \res_i.
\]
Using the fact that for any topological space $X$, viewed as an $\infty$-groupoid, hence a category, we have the following
\[
	\colim_{x\in X} \Lambda \simeq C_*(X, \Lambda),
\]
we see that for any object $M\in \Vect_\Lambda$ we have
\[
	\LKE_i(T(M)) \simeq \bigoplus_{k\geq 1} C_*(\Conf_k \mathbb{R}^n, \Lambda) \otimes_{\Sigma_k} M^{\otimes k} \simeq \Free_{E_n}(M).
\]

\subsubsection{Algebraic geometry} In what follows, we will mimic the construction motivated above in algebraic geometry. Namely, for a scheme $X$, the category $\Shv(X)$ plays the role of $\Vect_\Lambda$. We will define two categories, $\Fact(\Conf X)$ and $\Fact(\Ran X)$, which play the roles of
\[
	\Fun^\otimes(\fSet^\iso, \Vect_\Lambda)
\]
and $E_n(\Vect_\Lambda)$ respectively. We call objects in $\Fact(\Ran X)$ $E_X$-algebras or factorizable sheaves on $X$.

The $\Free_{E_X}$ functor is then the left adjoint to a natural forgetful functor
\[
	\Fact(\Ran X) \to \Shv(X),
\]
and is also constructed in two stages
\[
	\Shv(X) \simeq \Fact(\Conf X) \to \Fact(\Ran X).
\]

In \S\ref{subsec:monoidal_structures} and \S\ref{subsec:factorizable_sheaves}, we will do the necessary technical preparatory work. The actually construction will be carried out in \S\ref{subsec:free_EX}, where the first and second stages are presented in \S\ref{subsubsec:1st_stage_T_Free_E_X} and \S\ref{subsubsec:second_stage_g_!_Free_E_X} respectively. The reader who is not interested in the technical details can skip ahead to \S\ref{subsec:free_EX} and refer to \S\ref{subsec:monoidal_structures} and \S\ref{subsec:factorizable_sheaves} as necessary. In some sense, this technical set up is a tool to organize the combinatorial complexity mentioned in the introduction. 

\subsection{Chiral monoidal structures on $\Conf X$ and $\Ran X$} \label{subsec:monoidal_structures}

\subsubsection{$\Conf X$, $\Sym X$ and $\Ran X$} \label{subsubsec:various_spaces} Let $\fSet$ denote the category of nonempty finite sets, and let $\fSet^\surj$ and $\fSet^\iso$ be subcategories of $\fSet$ with the same objects, but morphisms are restricted to surjective and invertible ones respectively. For any scheme $X$, we define
\begin{align*}
	\Conf X &= \bigsqcup_{n\geq 1} \Conf_n X = \bigsqcup_{n\geq 1} \oversetsupscript{X}{\circ}{n}/\Sigma_n \simeq \colim_{I \in (\fSet^\iso)^\op} \oversetsupscript{X}{\circ}{I}, \\
	\Sym X &= \bigsqcup_{n \geq 1} \Sym^n X = \bigsqcup_{n\geq 1} X^n/\Sigma_n \simeq \colim_{I\in (\fSet^\iso)^\op} X^I, \\
	\Ran X &= \colim_{I \in (\fSet^\surj)^\op} X^I.
\end{align*}

From the definition, it's easy to see that $\Conf X$ and $\Sym X$ are pseudo-Artin prestack, and moreover, the natural inclusion
\[
	\Conf X \to \Sym X
\]
is pseudo-smooth (in fact, it's an open embedding). Moreover, $\Ran X$ is a pseudo-scheme.

For each non-empty finite set $I$, we denote
\[
	\ins_I: X^I \to \Ran X
\]
the natural map.

\subsubsection{} These definitions admit the following concrete interpretations in terms of $S$-points, where $S\in \Sch$. Indeed
\[
	(\Conf X)(S) = \{I\in \fSet, f: I \to X(S) \mid \text{graph of $f$ is disjoint}\}/\sim,
\]
where two elements $(I, f)$ and $(I', f')$ are identified if there's an isomorphism of sets $I \simeq I'$ that induces $f'$ from $f$. Note that, this is equivalent to saying that $(\Conf X)(S)$ consists of all finite subsets of $X(S)$ whose graphs are disjoint.

Similarly, $(\Sym X)(S)$ is a groupoid, where the objects are
\[
	\{I \in \fSet, f: I \to X(S)\}
\]
and morphisms $(I, f) \to (I', f')$ if there is an isomorphism $I \to I'$ that induces $f'$ from $f$. Note that the groupoid aspect of $\Conf X$ is trivial since all actions are free.

Finally, $(\Ran X)(S)$ is just the set of non-empty finite subsets of $X(S)$ (see~\cite[Prop. 4.1.3]{gaitsgory_atiyah-bott_2015}).

\subsubsection{} Following~\cite{beilinson_chiral_2004}, the chiral symmetric monoidal structure on $\Shv(\Ran X)$ has been defined in~\cite{francis_chiral_2011} and then, in a more convenient language in~\cite{raskin_chiral_2015}. We will follow the approach in~\cite{raskin_chiral_2015} and extend it slightly to $\Shv(\Conf X)$.\footnote{One can do the same for $\Sym X$. However, we don't need it in this paper.}

\subsubsection{} \label{subsubsec:general_principle_monoidal}
First, recall the following observation, which is used in the construction of the chiral monoidal structure on $\Shv(\Ran X)$ in~\cite{raskin_chiral_2015}. A (non-unital) symmetric monoidal structure on a category $\matheur{C} \in \DGCat_{\pres, \cont}$ is the same as a commutative monoid structure on $\matheur{C}$. Thus, if $\matheur{K}$ is a symmetric monoidal category, equipped with a right-lax monoidal functor
\[
	\Phi: \matheur{K} \to \DGCat_{\pres, \cont},
\]
then, $\Phi(M)$ is a symmetric monoidal category for any commutative monoid object $M$ in $\matheur{K}$. Moreover, if $M\to N$ is a morphism between commutative monoids in $\matheur{K}$ then the induced functor $\Phi(M) \to \Phi(N)$ is symmetric monoidal.

\subsubsection{}
In~\cite{raskin_chiral_2015}, $\Ran X$ is equipped with the structure of a commutative monoid in $\CorrPreStkSchAll$. Applying the right-lax monoidal functor
\[
	\Shv: \CorrPreStkSchAll \to \DGCat_{\pres, \cont}, \tag{see~\eqref{eq:shv_prestk_correspondences}}
\]
to $\Ran X$, we obtain a symmetric monoidal structure on $\Shv(\Ran X)$, which is the chiral monoidal structure.

We will now recall this construction and show how it could be adapted to the case of $\Shv(\Conf X)$. But first, we need to recall some basic facts about $\Ran X$ and $\Conf X$.

\subsubsection{} \label{subsubsec:union_pseudo_proper}
For any positive integer $n$, let
\[
	\union_n: (\Ran X)^n \to \Ran X
\]
be the morphism which is just the union on $S$-points, for any test scheme $S$. We will also use $\union$ instead of $\union_n$ when there's no confusion possible.

For any non-empty finite set $I$, we have the following Cartesian diagram
\[
\xymatrix{
	\colim_{I \surjects K = \cup_{i=1}^n K_i } X^K \ar[d] \ar[rr] && (\Ran X)^n \ar[d]^{\union} \\
	X^I \ar[rr]^{\ins_I} && \Ran X
}
\]
Here, the vertical map is induced by
\[
	I \surjects K,
\]
the horizontal map by
\[
\sqcup_{i=1}^n K_i \surjects \cup_{i=1}^n K_i = K,
\]
which results in
\[
	X^K \to \prod_{i=1}^n X^{K_i} \to (\Ran X)^n.
\]
Moreover, the diagram where we take the colimit over has objects as written, and morphisms are diagrams of the following shape
\[
\xymatrix{
	I \ar@{=}[d] \ar@{->>}[r] & K \ar@{->>}[d] \ar@{=}[r] & \cup_{i=1}^n K_i \ar@{->>}[d] \\
	I \ar@{->>}[r] & K' \ar@{=}[r] & \cup_{i=1}^n K'_i
}
\]

As a direct consequence, we get
\begin{lem}
The morphism
\[
	\union_n: (\Ran X)^n \to \Ran X
\]
is finitary pseudo-proper.
\end{lem}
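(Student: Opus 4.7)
The plan is to verify the definition of finitary pseudo-proper directly, by showing that for any scheme $S$ equipped with a map to $\Ran X$, the fiber product $(\Ran X)^n \times_{\Ran X} S$ can be exhibited as a finite colimit of schemes proper over $S$. The first step is to reduce to the case where $S \to \Ran X$ factors through $\ins_I : X^I \to \Ran X$ for some nonempty finite set $I$. This follows from the $S$-point description recalled just above: an $S$-point of $\Ran X$ is a nonempty finite subset of $X(S)$, and choosing an enumeration of such a subset of size $k$ yields a lift $S \to X^{[k]} \to \Ran X$ whose composition with $\ins_{[k]}$ recovers the original map.

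The second step is to invoke the Cartesian square displayed immediately above the statement, which identifies
\[
	X^I \times_{\Ran X} (\Ran X)^n \simeq \colim_{I \twoheadrightarrow K = \cup_i K_i} X^K,
\]
where the colimit is taken over the category of surjections $I \twoheadrightarrow K$ together with a decomposition $K = \bigcup_{i=1}^n K_i$ and the morphisms spelled out there. Base changing further along $S \to X^I$, and using that colimits in $\PreStk$ are universal (i.e.\ commute with pullback), gives
\[
	(\Ran X)^n \times_{\Ran X} S \simeq \colim_{I \twoheadrightarrow K = \cup_i K_i} \bigl( X^K \times_{X^I} S \bigr),
\]
indexed by the same category.

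Two final observations conclude. Since $I$ is a finite set, the indexing category is essentially finite: surjections out of $I$ are classified up to isomorphism by set partitions of $I$, and for each target $K$ there are only finitely many ways to write $K = \bigcup_{i=1}^n K_i$. Moreover, each transition map $X^K \to X^I$ in the colimit is the partial diagonal determined by $I \twoheadrightarrow K$, hence a closed immersion, and in particular proper; properness is preserved under base change, so every $X^K \times_{X^I} S$ is a scheme proper over $S$. This exhibits $(\Ran X)^n \times_{\Ran X} S$ as the required finite colimit of schemes proper over $S$, proving that $\union_n$ is finitary pseudo-proper. The only bookkeeping to attend to is the essential finiteness of the indexing diagram; there is no substantive obstacle, as the geometric content is entirely captured by the Cartesian square preceding the statement.
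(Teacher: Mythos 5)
Your proof is correct and follows the same route as the paper, which simply displays the Cartesian square over $X^I$ and declares the lemma a direct consequence; you have filled in exactly the implicit steps (factoring $S \to \Ran X$ through some $\ins_I$, universality of colimits under pullback, essential finiteness of the indexing category, and properness of the partial diagonals $X^K \hookrightarrow X^I$). No issues.
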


\subsubsection{} For any positive integer $n$, let $(\Ran X)^n_{\disj}$ denote the open sub-prestack of $(\Ran X)^n$ consisting of all points whose graphs induced by the $n$ factors are disjoint. Let
\[
	j_n: (\Ran X)^n_\disj \to (\Ran X)^n
\]
denote the open embedding. We will also use $j$ instead of $j_n$ when there's no risk of confusion. 

Finally, for non-empty finite sets $K_1, \dots, K_n$, we will use
\[
	\left(\prod_{i=1}^n X^{K_i}\right)_{\disj}
\]
to denote the open subscheme of $\prod_{i=1}^n X^{K_i}$ such that the graphs induced by the $n$ different factors are disjoint from each other.

\begin{lem} \label{lem:union_disj_etale}
For any non-empty finite set $I$, we have the following Cartesian diagram
\[
\xymatrix{
	\displaystyle\bigsqcup_{I = \sqcup_{i=1}^n K_i} \left(\prod_{i=1}^n X^{K_i}\right)_\disj \ar[r] \ar[d] & (\Ran X)^n_\disj \ar[d]^{\union\circ j} \\
	X^I \ar[r]^{\ins_I} & \Ran X
}
\]
In particular, $\union\circ j$ is an \etale{} morphism (in fact, it's a disjoint union of open morphisms).
\end{lem}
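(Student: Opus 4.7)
The plan is to verify the proposed Cartesian square by directly computing $S$-points on both sides, and then conclude étaleness by base change, exploiting the fact that every $S$-point of $\Ran X$ factors through some $\ins_I$.

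First, for any scheme $S$, I would compute the $S$-points of $X^I \times_{\Ran X} (\Ran X)^n_\disj$: such a point is a pair $(f, (T_1, \dots, T_n))$ where $f: I \to X(S)$, each $T_j$ is a nonempty finite subset of $X(S)$, the $T_j$'s have pairwise disjoint graphs, and $\bigcup_j T_j = f(I)$. Because the $T_j$'s are pairwise disjoint as subsets of $X(S)$, each $f(i)$ lies in exactly one $T_{j(i)}$; setting $K_j := \{i \in I \mid f(i) \in T_j\}$ yields an ordered decomposition $I = \bigsqcup_{j=1}^n K_j$ with every $K_j$ nonempty, and the condition on $f$ becomes that the images $f(K_j) \subset X(S)$ are pairwise disjoint as sections. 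Conversely, an ordered decomposition $I = \sqcup K_j$ together with $f: I \to X(S)$ having disjoint images $f(K_j)$ recovers $(f, (T_j))$ uniquely via $T_j := f(K_j)$. This bijection, natural in $S$, matches exactly the $S$-points of $\bigsqcup_{I = \sqcup_{i=1}^n K_i} \bigl(\prod_{i=1}^n X^{K_i}\bigr)_\disj$, establishing the Cartesian square.

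For étaleness, I observe that each $\bigl(\prod_i X^{K_i}\bigr)_\disj$ is an open subscheme of $\prod_i X^{K_i} \simeq X^I$, so the top horizontal map in the square, after base change to $X^I$, is a disjoint union of open immersions and therefore étale. To upgrade this to étaleness of $\union \circ j$ itself, I note that any $S$-point of $\Ran X$ is a nonempty finite subset of $X(S)$ (by the description in \S\ref{subsubsec:various_spaces}), and hence factors through $\ins_I$ for any chosen labelling $I \twoheadrightarrow T$. Consequently, the pullback of $\union \circ j$ to any scheme $S \to \Ran X$ is obtained by further base change from the pullback along some $\ins_I$, and étaleness is preserved under base change.

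The only real bookkeeping is extracting the ordered decomposition of $I$ cleanly from the disjointness of the $T_j$'s covering $f(I)$, which is immediate; no serious obstacle is anticipated.
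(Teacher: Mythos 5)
Your proof is correct, but it takes a genuinely different route from the paper's. The paper stays inside the colimit formalism: it invokes the Cartesian diagram of \S\ref{subsubsec:union_pseudo_proper} to write $X^I \times_{\Ran X} (\Ran X)^n$ as $\colim_{I \surjects K = \cup_i K_i} X^K$, observes that any term with $K_i \cap K_j \neq \emptyset$ becomes empty after intersecting with the disjointness locus, and then uses a cofinality argument to reduce the remaining colimit to the terms with $I \simeq K \simeq \sqcup_i K_i$, which yields the displayed disjoint union. You instead compute $S$-points of the fiber product directly, using the identification of $(\Ran X)(S)$ with the set of non-empty finite subsets of $X(S)$; your extraction of the ordered decomposition $K_j = \{i : f(i) \in T_j\}$ from the disjointness of the $T_j$'s, and the verification that this is inverse to $(I = \sqcup K_j, f) \mapsto (f, (f(K_j))_j)$, is sound. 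The one point you should make explicit is why the set-level computation determines the prestack fiber product: fiber products of prestacks are computed objectwise as homotopy fiber products in $\Spc$, so you need all three mapping spaces to be discrete; for $X^I(S)$ and $(\Ran X)^n_\disj(S)$ this is clear, and for $(\Ran X)(S)$ it is exactly the cited fact from \S\ref{subsubsec:various_spaces} (which the paper takes from [Gaitsgory--Lurie, Prop.\ 4.1.3]). This is where your argument leans on an external input that the paper's cofinality computation avoids; in exchange, your argument is more concrete and makes the indexing by decompositions transparent. Your deduction of \etale ness --- factoring an arbitrary $S$-point of $\Ran X$ through some $\ins_I$ and using that a disjoint union of open immersions is stable under base change --- is the same implicit reasoning the paper uses for its ``in particular'' clause, just spelled out.
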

\begin{proof}
For brevity's sake, we will only write down the proof for the case where $n=2$. Namely, we will show that the following diagram is Cartesian
\[
\xymatrix{
	\displaystyle\bigsqcup_{I = K_1 \sqcup K_2} (X^{K_1} \times X^{K_2})_\disj \ar[r] \ar[d] & (\Ran X \times \Ran X)_\disj \ar[d]^{\union\circ j} \\
	X^I \ar[r]^{\ins_I} & \Ran X
}
\]
The other cases are the parallel.

Due to \S\ref{subsubsec:union_pseudo_proper}, it suffices to show that
\[
	\left(\colim_{I \surjects K \simeq K_1 \cup K_2} X^K\right) \times_{\Ran X \times \Ran X} (\Ran X \times \Ran X)_\disj \simeq \bigsqcup_{I\simeq K_1\sqcup K_2} (X^{K_1} \times X^{K_2})_\disj.
\]
But, the left hand side is equivalent to
\[
	\colim_{I \surjects K \simeq K_1 \cup K_2}\left(X^K \times_{\Ran X \times \Ran X} (\Ran X \times \Ran X)_\disj\right).
\]
Consider the following Cartesian diagram
\[
\xymatrix{
	X^K \times_{X^{K_1} \times X^{K_2}} (X^{K_1} \times X^{K_2})_\disj \ar[r] \ar[d] & (X^{K_1} \times X^{K_2})_\disj \ar[d] \ar[r] & (\Ran X\times \Ran X)_\disj \ar[d]  \\
	X^K \ar[r] & X^{K_1} \times X^{K_2} \ar[r]^<<<<<<<<{\ins_{K_1}\times \ins_{K_2}} & \Ran X\times \Ran X
}
\]
Observe that when $K_1\cap K_2 \neq \emptyset$, we have
\[
X^K \times_{X^{K_1} \times X^{K_2}} (X^{K_1} \times X^{K_2})_\disj = \emptyset.
\]
Thus, we only need to consider the colimit over the subcategory spanned by $I \surjects K \simeq K_1 \sqcup K_2$. But now, cofinality allows us to restrict further to the subcategory consisting of $I\simeq K \simeq K_1 \sqcup K_2$, and we are done.
\end{proof}

\subsubsection{} Using Lemma~\ref{lem:union_disj_etale} above, we can equip $\Ran X$ with the structure of a commutative monoid in the category of $\CorrPreStkSchAll$, where the multiplication morphisms (see \S\ref{subsubsec:base_change_correspondences_schemes} for the notation)
\[
	(\Ran X)^n \sqto \Ran X
\]
are given by
\[
\xymatrix{
	(\Ran X)^n_{\disj} \ar[d]_{\union \circ j} \ar[r]^j & (\Ran X)^n \\
	\Ran X
}
\]

This induces on $\Shv(\Ran X)$ the structure of a symmetric monoidal category, which we will call the chiral monoidal structure. 

We will write $\Shv(\Ran X)^{\otimesch}$ when we want to emphasize the monoidal structure.

\subsubsection{} Unwinding the definition above, we see that if $\matheur{F}, \matheur{G} \in \Shv(\Ran X)$, then
\[
	\matheur{F} \otimesch\matheur{G} = (\union\circ j)_* j^!(\matheur{F}\boxtimes \matheur{G}).
\]

\subsubsection{} The story for $\Conf X$ is analogous. The commutative monoid structure on $\Conf X$ in $\CorrPreStkSchAll$ is given by the following multiplication map
\[
\xymatrix{
	(\Conf X)^n_\disj \ar[r]^j \ar[d]_\union & (\Conf X)^n \\
	\Conf X
}
\]

This induces on $\Shv(\Conf X)$ the structure of a symmetric monoidal category, which we will call the chiral monoidal structure. 

We will write $\Shv(\Conf X)^{\otimesch}$ when we want to emphasize the monoidal structure.

\subsubsection{} Unwinding the definition above, we see that if $\matheur{F}, \matheur{G} \in \Shv(\Conf X)$, then
\[
	\matheur{F} \otimesch \matheur{G} = \union_* j^! (\matheur{F} \boxtimes \matheur{G}).
\]

\subsubsection{}
We have been using the same notations (i.e. $j$ and $\union$) for both $\Ran X$ and $\Conf X$. In cases where confusion might occur, we will add a ``double prime'' to morphisms regarding $\Conf X$ (for example, $j''$ and $\union''$).

For the reader's convenience, we put all the maps in the diagram below.
\[
\xymatrix{
	(\Conf X \times \Conf X)_\disj \ar@{=}[d] \ar[dr]^{j''} \ar[rr]^{(c\times c)_\disj} && (\Sym X \times \Sym X)_{\disj} \ar[d]^{j'} \ar[r]^{(\lbar{g} \times \lbar{g})_\disj} & (\Ran X \times \Ran X)_\disj \ar[d]^j \\
	(\Conf X \times \Conf X)_\disj \ar[d]^{\union''} \ar[r]^{j''} & \Conf X \times \Conf X \ar[r]^{c\times c} & \Sym X \times \Sym X \ar[r]^{\lbar{g}\times \lbar{g}} \ar[d]^{\union'} & \Ran X \times \Ran X \ar[d]^\union \\
	\Conf X \ar[rr]^c && \Sym X \ar[r]^{\lbar{g}} & \Ran X
} \teq \label{eq:big_diagram}
\]

Note that all squares/rectangles (including the trapezoid at the top left), except the one at the bottom right and the rectangle on the right, are pull-back diagrams.

\begin{rmk} \label{rmk:chiral_otimes_lower_*}
The chiral monoidal structures could be realized slightly differently using the following natural equivalences
\begin{align*}
	\union_! \circ j_* &\simeq (\union \circ j)_*, \\
	\union''_! &\simeq \union''_*,
\end{align*}
where all the maps are the ones appearing in~\eqref{eq:big_diagram}. Moreover, we also have the following equivalences
\begin{align*}
	(\union \circ j)^* &\simeq (\union \circ j)^!, \\
	\union''^* &\simeq \union''^!.
\end{align*}

Indeed, for the pushforward functors, the statement for $\Ran X$ is from Proposition~\ref{prop:composition_!_*} and Lemma~\ref{lem:union_disj_etale} above. The statement for $\Conf X$ is due to the fact that the preimage $\union''^{-1}(\Conf_I X)$ is just a disjoint union of $\Conf_I X$. For the pullback functors, we use Proposition~\ref{prop:coincidence_f^!_f^*_etale} to conclude.
\end{rmk}

\subsection{Commutative co-algebras}
Let $\ComCoAlgch(\Conf X)$ and $\ComCoAlgch(\Ran X)$ denote the categories of commutative coalgebras in $\Shv(\Conf X)^{\otimesch}$ and $\Shv(\Ran X)^{\otimesch}$ respectively (with respect to the chiral monoidal structures).

We have a pair of adjoint functors
\[
	g_!: \Shv(\Conf X) \rightleftarrows \Shv(\Ran X): g^!. \teq\label{eq:adjoint_shv_g_!_g^!}
\]
induced by the natural inclusion
\[
	g: \Conf X \to \Ran X
\]

The main goal of this subsection is the following theorem, which upgrades the adjoint pair $g_! \dashv g^!$ to a pair of adjoint functors between the correspoding categories of commutative co-algebras.

\begin{thm} \label{thm:adjunction_coalgebras_g^!_g_!}
The adjoint pair $g_! \dashv g^!$ of~\eqref{eq:adjoint_shv_g_!_g^!} upgrades to a pair of adjoint functors
\[
	g_!: \ComCoAlgch(\Conf X) \rightleftarrows \ComCoAlgch(\Ran X): g^!.
\]
\end{thm}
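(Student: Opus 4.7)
The plan is to show that $g^!$ is strong symmetric monoidal with respect to the chiral monoidal structures on $\Shv(\Ran X)$ and $\Shv(\Conf X)$, and then invoke the general principle that an adjunction whose right adjoint is symmetric monoidal descends to commutative coalgebras. At the level of objects the key input is the square
\[
\xymatrix{
(\Conf X \times \Conf X)_\disj \ar[d]_{\union''} \ar[r]^{(g\times g)_\disj} & (\Ran X \times \Ran X)_\disj \ar[d]^{\union \circ j} \\
\Conf X \ar[r]^g & \Ran X
}
\]
which is Cartesian: on $S$-points, a pair of disjoint finite subsets of $X(S)$ whose union is a configuration is exactly a pair of disjoint configurations. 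Since $\union \circ j$ is schematic by Lemma~\ref{lem:union_disj_etale}, Proposition~\ref{prop:f_*_g^!_base_change} gives $g^!(\union \circ j)_* \simeq \union''_* (g \times g)_\disj^!$. Combined with the identity $j \circ (g \times g)_\disj = (g \times g) \circ j''$ and Lemma~\ref{lem:f^!_boxtimes}, this yields
\[
g^!(\matheur{F} \otimesch \matheur{G}) = g^!(\union \circ j)_* j^!(\matheur{F} \boxtimes \matheur{G}) \simeq \union''_* j''^!(g^!\matheur{F} \boxtimes g^!\matheur{G}) = g^!\matheur{F} \otimesch g^!\matheur{G}.
\]

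To promote these object-level equivalences to a coherent symmetric monoidal structure on $g^!$, I would view $g$ as the morphism $\Ran X \to \Conf X$ in $\CorrPreStkSchAll$ given by the correspondence $\Ran X \xleftarrow{g} \Conf X \xrightarrow{\id} \Conf X$, and verify that it is a morphism of commutative monoids. The requisite multiplicativity amounts to agreement of two composites of correspondences, which reduces precisely to the Cartesian square above and its $n$-ary analogues. Applying the right lax symmetric monoidal functor $\Shv: \CorrPreStkSchAll \to \DGCatprescont$ of~\eqref{eq:shv_prestk_correspondences}, via the general principle of~\S\ref{subsubsec:general_principle_monoidal}, produces a symmetric monoidal functor which a direct unwinding identifies with $g^!$.

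Finally, a strong symmetric monoidal functor preserves commutative coalgebras, so $g^!$ lifts to $\ComCoAlgch(\Ran X) \to \ComCoAlgch(\Conf X)$. Its left adjoint $g_!$ is automatically oplax symmetric monoidal (the structure map $g_!(\matheur{F} \otimesch \matheur{G}) \to g_! \matheur{F} \otimesch g_! \matheur{G}$ arises by adjunction from the units of $g_! \dashv g^!$ together with the strong monoidality of $g^!$), and therefore also preserves commutative coalgebras. The unit and counit of the original adjunction are then morphisms of coalgebras, producing the desired adjunction at the coalgebra level. The main technical subtlety is the $\infty$-categorical bookkeeping needed to exhibit $g$ as a morphism of commutative monoids in $\CorrPreStkSchAll$ together with all higher coherences; this is handled along the same lines as Raskin's construction of the chiral monoidal structure on $\Shv(\Ran X)$.
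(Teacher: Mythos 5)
Your proposal is correct and follows essentially the same route as the paper: the paper also exhibits $g$ as a morphism of commutative monoid objects in $\CorrPreStkSchAll$ via the Cartesian square comparing $\union''$ on $(\Conf X)^n_\disj$ with $\union\circ j$ on $(\Ran X)^n_\disj$, and then invokes the general principle of \S\ref{subsubsec:general_principle_monoidal} to conclude that $g^!$ is symmetric monoidal and $g_!$ is left-lax monoidal, whence both preserve commutative coalgebras. Your explicit object-level unwinding of $g^!(\matheur{F}\otimesch\matheur{G})$ is a useful sanity check but is subsumed by the correspondence-category argument.
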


This theorem is a direct consequence of the following
\begin{prop} \label{prop:g^!_g_!_monoidal}
	The functor $g^!$ is symmetric monoidal (with respect to the chiral monoidal structures on both sides), and hence, $g_!$, being a left adjoint functor, is left-lax monoidal.
\end{prop}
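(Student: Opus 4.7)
The plan is to apply the general principle recalled in~\S\ref{subsubsec:general_principle_monoidal}: since $\Shv: \CorrPreStkSchAll \to \DGCat_{\pres, \cont}$ is right-lax symmetric monoidal, any morphism of commutative monoids in $\CorrPreStkSchAll$ goes to a symmetric monoidal functor between the induced symmetric monoidal categories. It thus suffices to exhibit the correspondence that $\Shv$ sends to $g^!$ as a morphism of commutative monoids from $\Ran X$ to $\Conf X$ in $\CorrPreStkSchAll$.

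The relevant correspondence is
\[
\xymatrix{
\Conf X \ar[d]_{\id} \ar[r]^{g} & \Ran X \\
\Conf X &
}
\]
which has schematic vertical leg and which $\Shv$ sends to $\id_* \circ g^! = g^!$. To check this is a morphism of commutative monoids, I will verify that for each $n \geq 2$, the square in $\CorrPreStkSchAll$
\[
\xymatrix{
(\Ran X)^n \ar@{~>}[r] \ar@{~>}[d] & (\Conf X)^n \ar@{~>}[d] \\
\Ran X \ar@{~>}[r] & \Conf X
}
\]
commutes, where the vertical arrows are the respective chiral multiplications and the horizontal arrows are induced by $g^n$ and $g$. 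Unwinding the composition of correspondences, both ways around yield the same correspondence $(\Ran X)^n \sqto \Conf X$, namely
\[
\xymatrix{
(\Conf X)^n_\disj \ar[d]_{\union''} \ar[r]^{g^n \circ j''} & (\Ran X)^n \\
\Conf X &
}
\]

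This identification reduces to verifying that two squares extracted from diagram~\eqref{eq:big_diagram} are Cartesian. First, the square with horizontal legs $j''$ and $j$ and vertical legs induced by $g^n$ is Cartesian: the pullback of $(\Ran X)^n_\disj$ along $g^n: (\Conf X)^n \to (\Ran X)^n$ consists of $n$-tuples of configurations whose underlying point sets are pairwise disjoint, which is exactly $(\Conf X)^n_\disj$. Second, the square with horizontal legs $\union''$ and $\union \circ j$ and vertical legs induced by $g$ is Cartesian: an $S$-point of the pullback $\Conf X \times_{\Ran X} (\Ran X)^n_\disj$ is a configuration $C$ together with a partition of its underlying point set into $n$ non-empty subsets; each such subset is automatically a configuration, and the whole data assembles into a point of $(\Conf X)^n_\disj$.

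The main delicate point is not any one of these verifications but the promotion of the binary and unit compatibilities to the full $\infty$-categorical coherence required for a map of commutative monoids. This is most cleanly handled in the setup of~\cite{raskin_chiral_2015}, where the commutative monoid structures on $\Ran X$ and $\Conf X$ are constructed uniformly, so that $g$ tautologically respects them once the Cartesian squares above are in hand. Granted this, $g^!$ acquires a symmetric monoidal structure, and the last statement on $g_!$ follows by passage to the left adjoint.
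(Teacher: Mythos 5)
Your proposal is correct and follows essentially the same route as the paper: both invoke the general principle of \S\ref{subsubsec:general_principle_monoidal}, realize $g^!$ as the image of the correspondence with identity vertical leg and horizontal leg $g$, and reduce the compatibility with the multiplication maps to the Cartesianness of the square identifying $(\Conf X)^n_\disj$ with $\Conf X \times_{\Ran X} (\Ran X)^n_\disj$, which is exactly the square the paper checks (your first auxiliary square is a harmless extra verification). The paper is equally brief about the higher coherences, which it absorbs into the statement that $g_-$ upgrades to a morphism of commutative monoid objects in $\CorrPreStkSchAll$.
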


\subsubsection{} We will make use of \S\ref{subsubsec:general_principle_monoidal} to show that $g^!$ is symmetric monoidal. Indeed, let
\[
	g_{-}: \Ran X \sqto \Conf X \teq\label{eq:g_-_Ran_Conf_corr}
\]
be a morphism in $\CorrPreStkSchAll$ given by the following correspondence
\[
\xymatrix{
	\Conf X \ar@{=}[d] \ar[r]^g & \Ran X \\
	\Conf X
}
\]

For each positive integer $n$, $g_-$ induces a natural map in $\CorrPreStkSchAll$
\[
	g_-^n: (\Ran X)^n \sqto (\Conf X)^n.
\]

We have the following
\begin{lem}
For each positive integer $n$, we have the following commutative diagram in $\CorrPreStkSchAll$
\[
\xymatrix{
	(\Ran X)^n \ar@{~>}[d] \ar@{~>}[r] & \Ran X \ar@{~>}[d] \\
	(\Conf X)^n \ar@{~>}[r] & \Conf X
}
\]
where the horizontal maps are the multiplication maps, and the vertical ones are given by $g_-$ and $g_-^n$.
\end{lem}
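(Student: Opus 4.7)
The plan is to verify the asserted commutativity at the level of correspondences by computing the two paths explicitly and showing they produce the same span $(\Ran X)^n \sqto \Conf X$. Recall that composition of correspondences is computed by a fibered product on the middle legs, so the task is to identify these pullbacks and compare the resulting outer legs.

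First I would write down the two compositions. The bottom-left path consists of $g_-^n$, namely $(\Ran X)^n \xleftarrow{g^n} (\Conf X)^n \xrightarrow{\id} (\Conf X)^n$, followed by the chiral multiplication $(\Conf X)^n \xleftarrow{j} (\Conf X)^n_\disj \xrightarrow{\union} \Conf X$. Since the right leg of the first span is the identity, the required pullback is trivial, and the composition is just $(\Ran X)^n \xleftarrow{g^n \circ j} (\Conf X)^n_\disj \xrightarrow{\union} \Conf X$. The top-right path consists of the chiral multiplication $(\Ran X)^n \xleftarrow{j} (\Ran X)^n_\disj \xrightarrow{\union \circ j} \Ran X$, followed by $g_-$, namely $\Ran X \xleftarrow{g} \Conf X \xrightarrow{\id} \Conf X$. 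The relevant pullback here is $(\Ran X)^n_\disj \times_{\Ran X} \Conf X$, taken with respect to $\union \circ j$ on one side and $g$ on the other.

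The key step, and essentially the only real content of the proof, is to identify this last pullback with $(\Conf X)^n_\disj$. Working at the level of $S$-points as recalled in \S\ref{subsubsec:various_spaces}, an $S$-point of the fibered product amounts to an $n$-tuple of nonempty finite subsets of $X(S)$ with pairwise disjoint graphs, together with a configuration of $X$ over $S$ whose union (as a nonempty finite subset) agrees with the given configuration; since any subset of a configuration is itself a configuration, each entry of the $n$-tuple automatically lies in $\Conf X$, so the pullback is precisely $(\Conf X)^n_\disj$. It remains to check that the two outer legs agree: the right leg is $\union$ in both cases, while the left leg in the top-right path factors as $(\Conf X)^n_\disj \hookrightarrow (\Ran X)^n_\disj \xrightarrow{j} (\Ran X)^n$, which equals $(\Conf X)^n_\disj \xrightarrow{j} (\Conf X)^n \xrightarrow{g^n} (\Ran X)^n$, matching the bottom-left path. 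Everything else is formal manipulation of spans, so the main (and essentially the only) obstacle is the functor-of-points identification of the pullback.
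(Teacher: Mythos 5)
Your proof is correct and takes essentially the same route as the paper: the paper's entire proof consists of asserting the Cartesian square $(\Conf X)^n_\disj \simeq (\Ran X)^n_\disj \times_{\Ran X} \Conf X$ (with legs $\union$ and $g^n_\disj \colon (\Conf X)^n_\disj \to (\Ran X)^n_\disj$) and invoking the fact that composition of correspondences is computed by fibered products. You have simply spelled out the span compositions and verified that Cartesian square on $S$-points, which is a harmless elaboration of the same argument.
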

\begin{proof}
This is an immediate consequence of the following Cartesian square\footnote{Note that compositions inside the category of correspondences is given by Cartesian products.}
\[
\xymatrix{
	(\Conf X)^n_\disj \ar[d]_{\union} \ar[r]^{g^n_\disj} & (\Ran X)^n_\disj \ar[d]_{\union\circ j} \\
	\Conf X \ar[r]^g & \Ran X
}
\]
\end{proof}

\begin{cor}
The morphism $g_-$ of~\eqref{eq:g_-_Ran_Conf_corr} upgrades to a moprhism between commutative monoid objects in $\CorrPreStkSchAll$.
\end{cor}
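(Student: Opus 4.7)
The plan is to upgrade the pointwise compatibility established in the preceding lemma into the full datum of a morphism of commutative monoid objects. Recall that, in the formalism used here (following \cite{raskin_chiral_2015}), a commutative monoid in an $\infty$-category $\matheur{D}$ with finite products is encoded as a functor $\mathrm{Fin}_* \to \matheur{D}$ satisfying the Segal condition, and a morphism of commutative monoids is a natural transformation between such Segal-type functors. Thus the goal is to construct a natural transformation $g_-^\bullet : (\Ran X)^\bullet \sqto (\Conf X)^\bullet$ of such Segal functors valued in $\CorrPreStkSchAll$, whose restriction to $\langle 1\rangle$ is the correspondence $g_-$ and whose compatibility with the active maps $\langle n \rangle \to \langle 1 \rangle$ is provided by the square in the preceding lemma.

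First, I would construct the underlying level-$n$ components: for each $\langle n \rangle$, take $g_-^n : (\Ran X)^n \sqto (\Conf X)^n$ to be the $n$-fold external product $g_-^{\boxtimes n}$, i.e.\ the correspondence given by the $n$-fold Cartesian power of the span defining $g_-$. Since the symmetric monoidal structure on $\CorrPreStkSchAll$ is Cartesian-like and is compatible with Cartesian products of prestacks, this assignment is strictly compatible with the inert maps in $\mathrm{Fin}_*$, which just project onto factors. The Segal condition for the Ran-side and Conf-side functors is therefore preserved by the component-wise definition.

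Next, I would verify the compatibility with active maps. The previous lemma handles the totally active map $\langle n\rangle \to \langle 1\rangle$ for every $n$. For a general active map $\langle n\rangle \to \langle m\rangle$, which is a product of totally active maps onto singletons, the required square factors as an iterated product of the squares already verified, using that the Cartesian square
\[
\xymatrix{
    (\Conf X)^n_\disj \ar[d]_{\union} \ar[r]^{g^n_\disj} & (\Ran X)^n_\disj \ar[d]^{\union\circ j} \\
    \Conf X \ar[r]^g & \Ran X
}
\]
is preserved under arbitrary Cartesian products with copies of $g$ and $\id$. This is the main source of work, but it reduces to the observation that the open sub-prestacks $(-)_\disj$ and the maps $\union$, $j$, $g$ all behave well under taking products.

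The main obstacle is ensuring that these squares assemble into a genuine morphism in $\mathrm{CMon}(\CorrPreStkSchAll)$, i.e.\ that the coherences up to higher homotopy are accounted for and not just the bare commutation of individual squares. The crux is that every datum in sight is defined by taking honest finite products and fibre products of prestacks, so the entire construction is pulled back from the 1-categorical data of the single correspondence $g_-$ and the union maps. Consequently the Segal functor $(\Ran X)^\bullet \sqto (\Conf X)^\bullet$ is built out of Cartesian products of a single span of prestacks, and the $\infty$-categorical coherences required of a morphism of commutative monoid objects are automatic in this Cartesian setting.
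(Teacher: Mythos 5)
The paper gives no proof of this corollary at all: it is stated as an immediate consequence of the preceding lemma, and the only genuine input the paper supplies is the Cartesian square identifying the two composite correspondences $(\Ran X)^n \sqto \Conf X$. Your proposal fleshes out the formal skeleton that the paper leaves implicit, and the geometric content you invoke (the lemma's square, its stability under taking products with copies of $g$ and the identity, the behaviour of the disjoint loci under products) is exactly the right input. So in spirit you are following the intended argument, just in more detail than the paper itself.

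Two caveats on the framing. First, the symmetric monoidal structure on $\CorrPreStkSchAll$ induced by the product of prestacks is \emph{not} Cartesian: $\matheur{X}\times\matheur{Y}$ is not the categorical product in the correspondence category (a span into $\matheur{X}\times\matheur{Y}$ is not the same datum as a pair of spans), so the encoding of commutative monoids as Segal-type functors $\mathrm{Fin}_*\to\matheur{D}$ does not literally apply. One must use the general operadic definition (sections of the coCartesian fibration $\matheur{D}^{\otimes}\to \mathrm{Fin}_*$), for which your inert/active analysis still makes sense but the bookkeeping is slightly different. Second, your closing claim that the higher coherences are ``automatic in this Cartesian setting'' is precisely the point that needs an argument, and the Cartesianness it appeals to is the thing that fails. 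The standard way to discharge it --- and the one implicitly underlying the paper's reliance on the construction from \cite{raskin_chiral_2015} --- is to observe that $\Conf X$ and $\Ran X$ are colimits over the ordinary symmetric monoidal categories $\fSet^{\iso}$ and $\fSet^{\surj}$ (with disjoint union), and to induce the commutative monoid structures, and the morphism between them, from the strict symmetric monoidal inclusion $\fSet^{\iso}\hookrightarrow\fSet^{\surj}$ together with the compatibility of the disjointness loci; all higher coherences are then inherited from $1$-categorical data. Your instinct that everything is ``pulled back from $1$-categorical data'' is correct, but the $1$-categorical data in question is the indexing categories of finite sets, not the Cartesian products of prestacks.
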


\subsubsection{} As a result, \S\ref{subsubsec:general_principle_monoidal} implies that $g^!$ is symmetric monoidal, and hence $g_!$, being a left adjoint, is left-lax monoidal. We thus obtain Proposition~\ref{prop:g^!_g_!_monoidal} and hence, Theorem~\ref{thm:adjunction_coalgebras_g^!_g_!}.

\begin{rmk}
One can define the chiral monoidal structure on $\Shv(\Sym X)$ in a similar way as above. Consider
\[
\xymatrix{
	\Conf X \ar@/_1.5ex/[rr]_{g = \lbar{g} \circ c} \ar[r]^c & \Sym X \ar[r]^{\lbar{g}} & \Ran X
}
\]
where $c$ is the natural open embedding, and $\lbar{g}$ is the obvious map. One can show that $\lbar{g}_!$ is symmetric monoidal. Moreover, using Proposition~\ref{prop:pseudo_smooth_base_change}, one can show that $c_!$ is right-lax monoidal. We do not need this fact in the current paper.
\end{rmk}

\subsection{Factorizable sheaves} \label{subsec:factorizable_sheaves}
We will now come to the definition of $E_X$-algebras, or factorizable sheaves. Such a definition has been given in~\cite{francis_chiral_2011}. We will give a more geometric definition, which is equivalent to the one given there, but which fits better into our framework so far.

\subsubsection{} Let $\matheur{F}$ be an object in $\ComCoAlg^\ch(\Ran X)$ or $\ComCoAlg^\ch(\Conf X)$. Then by definition, $\matheur{F}$ is equipped with morphisms of the form
\[
	\matheur{F} \to \matheur{F}^{\otimesch n},
\]
with various compatibility conditions. Unwinding the definitions, this is the same as
\[
	\matheur{F} \to (\union\circ j)_* j^! (\matheur{F}^{\boxtimes n}) \qquad\text{or} \qquad \matheur{F} \to \union''_* j''^! (\matheur{F}^{\boxtimes n}),
\]
respectively. By adjunction and Remark~\ref{rmk:chiral_otimes_lower_*}, this is the same as
\[
	(\union\circ j)^! \matheur{F} \to j^!(\matheur{F}^{\boxtimes n}) \qquad\text{or}\qquad \union''^! \matheur{F} \to j''^!(\matheur{F}^{\boxtimes n}), \teq \label{eq:fact_morphisms_shriek}
\]
respectively, which is equivalent to
\[
	(\union\circ j)^* \matheur{F} \to j^*(\matheur{F}^{\boxtimes n}) \qquad\text{or}\qquad \union''^* \matheur{F} \to j''^*(\matheur{F}^{\boxtimes n}), \teq \label{eq:fact_morphisms_star}
\]
respectively.

\begin{defn}
$\Fact(\Ran X)$ and $\Fact(\Conf X)$ are full subcategories of 
\[
	\ComCoAlg^\ch(\Ran X) \qquad\text{and}\qquad \ComCoAlg^\ch(\Conf X)
\]
respectively, consisting of objects where the natural maps~\eqref{eq:fact_morphisms_shriek} (or~\eqref{eq:fact_morphisms_star}) are equivalences for all $n$. We call objects in $\Fact(\Ran X)$ factorizable sheaves over $X$, or $E_X$-algebras.
\end{defn}

\begin{rmk}
	In the case of $\Fact(\Ran X)$, our definition coincides with the one given in~\cite{francis_chiral_2011}, where it is called $\Fact (X)$.
\end{rmk}

\begin{prop}
	The adjoint pair $g_! \dashv g^!$ in Theorem~\ref{thm:adjunction_coalgebras_g^!_g_!} preserves factorizability, and hence, induces an adjoint pair
	\[
		g_!: \Fact(\Conf X) \rightleftarrows \Fact(\Ran X): g^!
	\]
	between the two respective full subcategories.
\end{prop}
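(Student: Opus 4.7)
In view of Theorem~\ref{thm:adjunction_coalgebras_g^!_g_!}, it suffices to verify that each of the functors $g^!$ and $g_!$ preserves factorizability; the adjunction then restricts automatically to the full subcategories of factorizable sheaves.

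For $g^!$, the preservation of factorizability follows quickly from the symmetric monoidality of $g^!$ (Proposition~\ref{prop:g^!_g_!_monoidal}) together with the compatibility of $!$-pullback with $\boxtimes$ (Lemma~\ref{lem:f^!_boxtimes}). Indeed, for $\matheur{F} \in \Fact(\Ran X)$, chasing the Cartesian identities $g \circ \union'' = (\union \circ j) \circ g^n_\disj$ and $g^n \circ j'' = j \circ g^n_\disj$ extracted from diagram~\eqref{eq:big_diagram} gives
\[
	(\union'')^! g^! \matheur{F} \simeq (g^n_\disj)^! (\union \circ j)^! \matheur{F} \simeq (g^n_\disj)^! j^! (\matheur{F}^{\boxtimes n}) \simeq (j'')^! ((g^! \matheur{F})^{\boxtimes n}),
\]
the middle step being factorizability of $\matheur{F}$ and the last step Lemma~\ref{lem:f^!_boxtimes} applied $n$ times.

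For $g_!$, I would first check that $g$, and hence each power $g^n$, is schematic: over a test map $X^I \to \Ran X$, the fiber of $g$ decomposes as a disjoint union, indexed by surjections $I \twoheadrightarrow K$, of the open subschemes $X^{\circ K}$ of $X^I$ diagonally embedded. Combined with Lemma~\ref{lem:union_disj_etale} (which makes $\union \circ j$ and $\union''$ \'etale, so that $*$- and $!$-pullback coincide by Proposition~\ref{prop:coincidence_f^!_f^*_etale}), the left-adjoint version of Proposition~\ref{prop:f_*_g^!_base_change} applied to the Cartesian square with vertical maps $(\union \circ j, \union'')$ then yields
\[
	(\union \circ j)^! g_! \matheur{F} \simeq (g^n_\disj)_! (\union'')^! \matheur{F} \simeq (g^n_\disj)_! (j'')^! (\matheur{F}^{\boxtimes n}),
\]
the second step being factorizability of $\matheur{F}$.

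It remains to identify $(g^n_\disj)_! (j'')^! (\matheur{F}^{\boxtimes n})$ with $j^! ((g_! \matheur{F})^{\boxtimes n})$; this is a K\"unneth-type equivalence restricted to the disjoint locus, and constitutes the principal obstacle of the argument. Globally such a K\"unneth fails since $g$ is not pseudo-proper, so Lemma~\ref{lem:Kunneth_pseudo_proper} cannot be invoked; and neither $(\Ran X)^n_\disj$ nor $(\Ran X)^n$ is pseudo-Artin, so pseudo-smooth base change (Proposition~\ref{prop:pseudo_smooth_base_change}) is unavailable along $j$. The route I would take is to exploit that on the disjoint locus the $n$ copies of $g$ decouple: testing against a product $X^{I_1} \times \cdots \times X^{I_n} \to (\Ran X)^n$ and pulling back to the open sublocus where the coordinates in distinct factors avoid one another, the fiber of $g^n$ factors as a product of fibers of $g$, and the ordinary K\"unneth formula for schemes (Lemma~\ref{lem:Kunneth_scheme_target}) applies. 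Assembling these identifications over the cofinal colimit presentation of $(\Ran X)^n$—here the continuity of $j^!$ and of $(g^n)_!$ is essential—yields the desired equivalence, and thereby the factorization condition for $g_! \matheur{F}$.
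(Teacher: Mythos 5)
Your handling of $g^!$ and your reduction of the $g_!$ case to the single remaining equivalence $(g^n_\disj)_!(j'')^!(\matheur{F}^{\boxtimes n}) \simeq j^!((g_!\matheur{F})^{\boxtimes n})$ agree with the paper, and you have correctly located the crux. The gap is in how you propose to cross it. You assert that the global \Kunneth{} formula $(g\times g)_!(\matheur{F}\boxtimes\matheur{F})\simeq g_!\matheur{F}\boxtimes g_!\matheur{F}$ fails because $g$ is not pseudo-proper; but non-pseudo-properness only means Lemma~\ref{lem:Kunneth_pseudo_proper} does not apply \emph{directly} --- the formula itself holds, and proving it is exactly how the paper concludes. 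The point you are missing is the factorization $g = \lbar{g}\circ c$ through $\Sym X$ from diagram~\eqref{eq:big_diagram}: the map $c:\Conf X\to\Sym X$ is a termwise open embedding between colimits over the same index category $(\fSet^\iso)^\op$, so Lemma~\ref{lem:Kunneth_equivariant} yields \Kunneth{} for $c_!$, while $\lbar{g}:\Sym X\to\Ran X$ is pseudo-proper, so Lemma~\ref{lem:Kunneth_pseudo_proper} yields it for $\lbar{g}_!$; composing gives \Kunneth{} for $g_!$ with no restriction to the disjoint locus. Your remaining step then follows from the same left-adjoint form of Proposition~\ref{prop:f_*_g^!_base_change} (applied along the schematic open embedding $j$, where $j^*=j^!$) that you already invoked.

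The workaround you sketch instead --- establishing a \Kunneth{} formula only over the disjoint locus by computing fibers of $g^n_\disj$ over test schemes of $(\Ran X)^n_\disj$ and ``assembling'' --- contains an unjustified base-change step of exactly the kind you were trying to avoid: to pass from the fiberwise identifications to a statement about $(g^n_\disj)_!$ you must commute this $!$-pushforward with pullback along the (restricted) insertion maps $\ins_{I_1}\times\cdots\times\ins_{I_n}$, and none of the paper's results covers this. Those insertion maps are pseudo-proper but not schematic, so the $*$-pullback base change of Proposition~\ref{prop:f_*_g^!_base_change} is unavailable; $g^n_\disj$ is not pseudo-proper, so Proposition~\ref{prop:pseudo_proper_base_change} is unavailable; and, as you yourself note, the relevant prestacks are not pseudo-Artin. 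So the ``principal obstacle'' you identify is still standing at the end of your argument, whereas the detour through $\Sym X$ removes it entirely.
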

\begin{proof}
	The fact that $g^!$ preserves factorizability is easy to see since upper-$!$ behaves nicely with tensor products, by Lemma~\ref{lem:f^!_boxtimes}.
	
	For the case of $g_!$, let $\matheur{F} \in \Fact(\Conf X)$, we will show that $g_! \matheur{F} \in \Fact(\Ran X)$. For simplicity's sake, we will only give a proof when $n=2$. The general case is completely analogous. We have,
	\begin{align*}
		j^*(g_! \matheur{F} \boxtimes g_! \matheur{F})
		&\simeq j^* (g\times g)_! (\matheur{F} \boxtimes \matheur{F}) \\
		&\simeq (g\times g)_{\disj!} j''^* (\matheur{F} \boxtimes \matheur{F}) \\
		&\simeq (g\times g)_{\disj!} \union''^* \matheur{F} \\
		&\simeq (\union\circ j)^* g_! \matheur{F}.
	\end{align*}
	Here, the first equivalence is due to the fact that \Kunneth{} formula works for both $\lbar{g}$ and $c$, by Lemmas~\ref{lem:Kunneth_pseudo_proper} and~\ref{lem:Kunneth_equivariant} respectively. The other equivalences are due to Proposition~\ref{prop:f_*_g^!_base_change}, and the fact that $\matheur{F}$ is in $\Fact(\Conf X)$.
\end{proof}

\subsection{Free $E_X$-algebras} \label{subsec:free_EX}
We are now ready to construct the free $E_X$-algebra functor.

\subsubsection{} \label{subsubsec:1st_stage_T_Free_E_X} Let $\matheur{F} \in \Shv(X)$. Then for any $I\in \fSet$, we have $\matheur{F}^{\boxtimes I}|_{\oversetsupscript{X}{\circ}{I}} \in \Shv(\oversetsupscript{X}{\circ}{I})$. The symmetric group $\Sigma_I$ on $I$ acts on everything in sight and so by definition, we get an object in $\Shv(\Conf_I X)$. These sheaves together give us an object $T\matheur{F} \in \Shv(\Conf X)$, and so we have a functor
\[
	T: \Shv(X) \to \Shv(\Conf X).
\]

Observe that $T\matheur{F}$ has a natural structure of a factorizable commutative co-algebra object in $\Shv(\Conf X)^{\otimesch}$. Indeed, we have the following natural equivalences by construction
\[
	\union''^!(T\matheur{F}) \simeq j''^!((T\matheur{F})^{\boxtimes n}).
\]
Thus $T$ upgrades to a functor
\[
	T: \Shv(X) \to \Fact(\Conf X).
\]

Let $\delta: X \to \Conf X$ denote the obvious inclusion. The following is immediate.
\begin{lem}
	$T$ and $\delta^!$  are mutually inverse functors
	\[
		T: \Shv(X) \rightleftarrows \Fact(\Conf X): \delta^!.
	\]
\end{lem}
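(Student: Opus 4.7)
My plan is to check the two composites separately, component by component on $\Conf X = \bigsqcup_{n\geq 1} \Conf_n X$.

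For $\delta^! T \simeq \id_{\Shv(X)}$, the argument is tautological: by the definition of $T$ in \S\ref{subsubsec:1st_stage_T_Free_E_X}, the component $(T\matheur{F})|_{\Conf_n X}$ is the $\Sigma_n$-equivariant descent of $\matheur{F}^{\boxtimes n}|_{\oversetsupscript{X}{\circ}{n}}$ along the étale quotient $p_n: \oversetsupscript{X}{\circ}{n} \to \Conf_n X$. For $n=1$ the quotient $p_1$ is the identity and $(T\matheur{F})_1 \simeq \matheur{F}$; since $\delta$ identifies $X$ with $\Conf_1 X$, this gives $\delta^! T \matheur{F} \simeq \matheur{F}$ functorially in $\matheur{F}$.

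For the other composite $T\delta^! \simeq \id_{\Fact(\Conf X)}$, fix $\matheur{G} \in \Fact(\Conf X)$ and write $\matheur{G}_n := \matheur{G}|_{\Conf_n X}$. By construction $(T\delta^!\matheur{G})_n$ is the $\Sigma_n$-descent of $\matheur{G}_1^{\boxtimes n}|_{\oversetsupscript{X}{\circ}{n}}$. The key input is the factorization isomorphism for $\matheur{G}$ applied to the $n$-fold product, which restricts from $(\Conf X)^n_\disj$ to the open subspace $(\Conf_1 X)^n_\disj = \oversetsupscript{X}{\circ}{n}$. Over this subspace $\union''$ becomes $p_n$ and $j''$ becomes the open inclusion into $X^n$, so the factorization equivalence specializes to
\[
p_n^!(\matheur{G}_n) \simeq \matheur{G}_1^{\boxtimes n}\bigl|_{\oversetsupscript{X}{\circ}{n}}.
\]
Since $p_n$ is an étale $\Sigma_n$-torsor (the action is free), étale descent identifies $\matheur{G}_n$ with the $\Sigma_n$-descent of $p_n^!\matheur{G}_n$, and hence by the above isomorphism with $(T\delta^!\matheur{G})_n$.

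The main subtlety I expect is not the pointwise equivalence but verifying that the resulting isomorphism $T\delta^!\matheur{G} \simeq \matheur{G}$ is an equivalence \emph{in} $\Fact(\Conf X)$, i.e., that it respects the factorization coalgebra structures and assembles coherently over all $n$. This should however be automatic: the $\Sigma_n$-equivariance used in the construction of $T$ and the symmetric structure of the factorization isomorphism both come from the same chiral monoidal structure on $\Shv(\Conf X)$, so the compatibilities match by construction. This is presumably why the statement is flagged as ``immediate'' in the excerpt.
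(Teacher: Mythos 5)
Your argument is correct and is exactly the unpacking intended by the paper, which states this lemma without proof as ``immediate'': the composite $\delta^!T\simeq\id$ follows from $(T\matheur{F})_1\simeq\matheur{F}$, and $T\delta^!\simeq\id$ follows by restricting the factorization equivalence $\union''^!\matheur{G}\simeq j''^!(\matheur{G}^{\boxtimes n})$ to the component $(\Conf_1 X)^n_{\disj}=\oversetsupscript{X}{\circ}{n}$ and using that $\Shv(\Conf_n X)$ is by definition the category of $\Sigma_n$-equivariant sheaves on $\oversetsupscript{X}{\circ}{n}$ (via the colimit presentation of $\Conf_n X$). The coherence point you flag at the end is indeed the only thing left implicit, and your explanation of why it is automatic is the right one.
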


\subsubsection{} \label{subsubsec:second_stage_g_!_Free_E_X}
The functor $\Free_{E_X}$ is defined as the composition $\Free_{E_X} = g_! \circ T$:
\[
\xymatrix{
	\Shv(X) \ar@/_2ex/[rr]_{\Free_{E_X}} \ar@{=}[r]^<<<<<T & \Fact(\Conf X) \ar[r]^{g_!} & \Fact(\Ran X)
}
\]

By abuse of notation, we also denote
\[
	\delta: X \to \Ran X
\]
the obvious map. The following theorem, which is a direct consequence of the discussion above, concludes our construction.
\begin{thm} \label{thm:Free_E_X}
	We have an adjoint pair
	\[
		\Free_{E_X}: \Shv(X) \rightleftarrows \Fact(\Ran X): \delta^!,
	\]
	where $\Free_{E_X} \simeq g_! \circ T$.
\end{thm}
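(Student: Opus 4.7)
The theorem is essentially the composition of two adjunctions that have already been established, so the plan is to assemble these pieces cleanly rather than to prove anything genuinely new.

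First, I would recall the adjoint equivalence
\[
	T: \Shv(X) \rightleftarrows \Fact(\Conf X): \delta^!_{\Conf},
\]
established in the lemma of \S\ref{subsubsec:1st_stage_T_Free_E_X}, where I write $\delta_{\Conf}: X \to \Conf X$ to temporarily distinguish it from the diagonal $\delta_{\Ran}: X \to \Ran X$. Second, I would recall the adjoint pair
\[
	g_!: \Fact(\Conf X) \rightleftarrows \Fact(\Ran X): g^!,
\]
coming from the proposition immediately before \S\ref{subsec:free_EX}, which upgrades the basic adjunction $g_! \dashv g^!$ on sheaves to the categories of factorization algebras.

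Composing these two adjunctions in the standard way, the composite left adjoint is by definition $\Free_{E_X} = g_! \circ T$, and its right adjoint is $\delta^!_{\Conf} \circ g^!$. The only remaining step is to identify this right adjoint with $\delta^!$, where now $\delta = \delta_{\Ran}: X \to \Ran X$. This is immediate from the factorization $\delta_{\Ran} = g \circ \delta_{\Conf}$ and the functoriality of $(-)^!$ (built into the very definition of $\Shv$ as a right Kan extension along Yoneda), which gives a natural equivalence
\[
	\delta^!_{\Ran} \simeq \delta^!_{\Conf} \circ g^!.
\]

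There is no real obstacle here: all the genuine content has been packaged into the two preceding results, namely the equivalence $T \simeq (\delta^!_{\Conf})^{-1}$ and the fact that $g_! \dashv g^!$ preserves factorizability. The only thing to be careful about is the mild notational conflict in the use of $\delta$ for both maps, which the ``abuse of notation'' remark in the statement already flags; the proof just needs to make explicit that the two usages are compatible via $\delta_{\Ran} = g \circ \delta_{\Conf}$.
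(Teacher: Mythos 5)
Your proposal is correct and follows exactly the route the paper takes: the theorem is stated there as ``a direct consequence of the discussion above,'' namely the composition of the equivalence $T \dashv \delta^!$ between $\Shv(X)$ and $\Fact(\Conf X)$ with the adjunction $g_! \dashv g^!$ between $\Fact(\Conf X)$ and $\Fact(\Ran X)$, together with the same abuse of notation for $\delta$. Your explicit remark that $\delta^!_{\Ran} \simeq \delta^!_{\Conf}\circ g^!$ via $\delta_{\Ran} = g\circ\delta_{\Conf}$ and functoriality of $(-)^!$ is a correct, if minor, elaboration of what the paper leaves implicit.
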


\begin{rmk}
	From now on, since there will be no risk of confusion, we will use $\Fact(X)$ to denote $\Fact(\Ran X)$ in conforming to the notation used in~\cite{francis_chiral_2011}.
\end{rmk}

\section{Alternative construction via Lie algebras} \label{sec:alternative_construction_Lie}
In this section, we will present an alternative construction of the $\Free_{E_X}$ functor which links to the world of Lie algebras. The duality between commutative coalgebras and Lie algebras, which goes by the name Koszul duality, was first developed by Quillen in~\cite{quillen_rational_1969}. It was further developed in the operadic setting by Ginzburg and Kapranov in~\cite{ginzburg_koszul_1994}. In the chiral setting, the theory chiral Koszul duality, developed by Francis and Gaitsgory in~\cite{francis_chiral_2011}, provides us with the necessary connection to Lie algebras.

We will start the section with a quick summary of this theory, and refer the reader to~\cite{francis_chiral_2011} for the proofs. After that, we will present the new construction of $\Free_{E_X}$, and then conclude by showing that the two constructions are tied together by a common grading that is natural on both sides.

\subsection{Chiral Koszul duality}
\subsubsection{} One can define another monoidal structure on $\Shv(\Ran X)$, called the $\star$-monoidal structure. This is given by
\begin{align*}
	\Shv(\Ran X) \otimes \Shv(\Ran X) &\to \Shv(\Ran X) \\
	(\matheur{F}, \matheur{G}) &\mapsto \matheur{F}\otimesstar \matheur{G} := \union_!(\matheur{F} \boxtimes \matheur{G}).
\end{align*}
We will write $\Shv(\Ran X)^{\otimesstar}$ to denote the symmetric monoidal category of sheaves on the Ran prestack equipped with the $\star$-monoidal structure.

\subsubsection{} Let
\begin{align*}
	\Liech(\Ran X) &= \Lie(\Shv(\Ran X)^{\otimesch}), \\
	\Liestar(\Ran X) &= \Lie(\Shv(\Ran X)^{\otimesstar})
\end{align*}
denote the categories of Lie algebras on $\Ran X$ with respect to the $\ch$- and $\star$-monoidal structures. Let $\Liech(X)$ and $\Liestar(X)$ be the full subcategories of $\Liech(\Ran X)$ and $\Liestar(\Ran X)$ respectively consisting of Lie algebras, whose underlying objects are supported on the diagonal. In other words, we require that the underlying objects lie in the essential image of
\[
	\delta_!: \Shv(X) \to \Shv(\Ran X).
\]

\subsubsection{} The main theorem in~\cite{francis_chiral_2011} is as follows.
\begin{thm}[Francis, Gaitsgory]
	We have mutally inverse functors
	\[
		C^\ch: \Lie^\ch(\Ran X) \rightleftarrows \ComCoAlg^\ch(\Ran X) : \Prim^\ch[-1]
	\]
	which induce an equivalence of the respective full subcategories
	\[
	\xymatrix{
		\Lie^\ch(\Ran X) \ar@<\arrdis>[rr]^<<<<<<<<<<{C^\ch} && \ar@<\arrdis>[ll]^>>>>>>>>>>{\Prim^\ch[-1]} \ComCoAlg^\ch(\Ran X) \\
		\Lie^\ch(X) \ar@{^(->}[u] \ar@<\arrdis>[rr]^{C^\ch} && \ar@<\arrdis>[ll]^{\Prim^\ch[-1]} \Fact(X) \ar@{^(->}[u]
	}
	\]
\end{thm}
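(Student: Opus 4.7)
My plan is to follow the general strategy of operadic Koszul duality and adapt it to the setting of sheaves on $\Ran X$, exploiting the interaction between the chiral and $\star$-monoidal structures as hinted in the paper.

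First, I would establish the adjunction at the general level. In any stable presentable symmetric monoidal $\infty$-category $\matheur{C}$, there is an adjoint pair
\[
    C: \Lie(\matheur{C}) \rightleftarrows \ComCoAlg(\matheur{C}) : \Prim[-1]
\]
given by Chevalley--Eilenberg chains (a version of $\Sym(L[1])$ with the Koszul differential) on the left and (shifted) primitives on the right. Applied to $\matheur{C} = \Shv(\Ran X)^\otimesch$, this produces the desired functors $C^\ch$ and $\Prim^\ch[-1]$ between $\Lie^\ch(\Ran X)$ and $\ComCoAlg^\ch(\Ran X)$. The question is then when this adjunction is an equivalence, and how it interacts with the diagonal-supported / factorization subcategories.

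The second step would be to do the analogous construction in the easier $\star$-monoidal setting on $\Shv(\Ran X)^{\otimesstar}$, where one gets $C^\star$ and $\Prim^\star[-1]$. Here Koszul duality is much more tractable: since $\otimesstar$ is built from $\union_!$, it behaves like a ``coproduct-like'' tensor product, and on objects in the essential image of $\delta_!$ one expects $C^\star$ and $\Prim^\star[-1]$ to furnish an honest equivalence by a direct argument mimicking the classical Quillen / Ginzburg--Kapranov story in chain complexes. I would first pin down this $\star$-version as a genuine equivalence on $\Lie^\star(X)$ and its image.

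The third and central step is the comparison of the two monoidal structures. The pattern to exploit is the \devissage{} triangle (Proposition~\ref{prop:devissage}) applied to the open-closed decomposition of $(\Ran X)^n$ into the disjoint locus and its complement, which produces a fiber sequence relating $\otimesstar$, $\otimesch$, and a ``diagonal'' piece computed via $\delta$. For Lie algebras supported on the diagonal, one shows that this diagonal correction vanishes in a suitable sense, so $\Lie^\ch(X)$ and $\Lie^\star(X)$ are equivalent categories. Dually, the factorization condition defining $\Fact(X)$ is exactly the condition that the coalgebra structure is controlled by the $\otimesch$-part without the diagonal contribution, so $C^\ch$ sends $\Lie^\ch(X)$ to $\Fact(X)$ and $\Prim^\ch[-1]$ goes back, with unit and counit being equivalences transported from the $\star$-side.

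The main obstacle, I expect, is the third step: verifying precisely that the passage between the $\star$- and $\ch$-structures preserves Koszul duality, i.e.\ that the ``diagonal piece'' in the fiber sequence between the two tensor products really disappears after taking primitives/chains of diagonal-supported Lie algebras. This involves careful homotopy-coherent manipulation with the symmetric monoidal operations and use of the base change results from \S\ref{sec:sheaves_on_prestacks} to identify the resulting terms. Once this compatibility is in place, the equivalence on the subcategories $\Lie^\ch(X) \simeq \Fact(X)$ follows essentially formally from the $\star$-version, and the equivalence on the ambient categories is a consequence of presenting every object as a colimit of objects coming from the diagonal.
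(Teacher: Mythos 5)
This theorem is not proved in the paper at all: it is imported verbatim from Francis--Gaitsgory, and the author explicitly defers all proofs to \cite{francis_chiral_2011}. So the relevant comparison is between your sketch and the actual Francis--Gaitsgory argument, and there your plan has its logic essentially backwards. Your second step asserts that the $\star$-side Koszul duality $C^\star \dashv \Prim^\star[-1]$ on $\Shv(\Ran X)^{\otimesstar}$ is ``much more tractable'' and already an honest equivalence on diagonal-supported objects, from which you propose to transport the chiral equivalence. In fact the $\star$-adjunction is \emph{not} an equivalence, even on $\Liestar(X)$: for diagonal-supported objects the $n$-fold $\star$-powers $\union_!(\delta_!\matheur{F})^{\boxtimes n}$ remain nonzero over the small diagonal for every $n$, which is exactly the classical obstruction (failure of convergence of the $\Sym$-filtration) that makes Lie/cocommutative Koszul duality fail unconditionally in a general stable setting; the Quillen and Ginzburg--Kapranov equivalences require connectivity or nilpotence hypotheses that are absent here. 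The entire point of the Francis--Gaitsgory theorem is that the \emph{chiral} tensor structure is the well-behaved one: because $\otimesch$ restricts to the disjoint locus before pushing forward, the $n$-fold chiral tensor power of any object vanishes on every stratum $X^I$ with $|I| < n$, so $\Shv(\Ran X)^{\otimesch}$ is pro-nilpotent in their sense, and their general theorem that Koszul duality is an equivalence for pro-nilpotent symmetric monoidal categories applies directly to the \emph{ambient} categories $\Liech(\Ran X)$ and $\ComCoAlgch(\Ran X)$.

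This misdirection also undermines your third step: the \devissage{} comparison between $\otimesstar$ and $\otimesch$ is used in \cite{francis_chiral_2011} to construct the chiral envelope $U^\ch$ and to prove statements like Proposition~\ref{prop:Uch_Cch_Cstar}, not to establish the duality itself, and there is no mechanism by which a (nonexistent) $\star$-equivalence would descend to the chiral one. Likewise, the equivalence on the full categories cannot be obtained by ``presenting every object as a colimit of objects coming from the diagonal''; it is proved directly from pro-nilpotence, and the restriction to $\Liech(X) \rightleftarrows \Fact(X)$ is then the comparatively easy supplementary step (checking that $C^\ch$ of a diagonal-supported Lie algebra satisfies the factorization property and conversely). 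The missing idea in your proposal is precisely the pro-nilpotence of the chiral monoidal structure coming from the cardinality filtration of $\Ran X$; without it the convergence of the bar/cobar-type constructions underlying $C^\ch$ and $\Prim^\ch[-1]$ is not addressed anywhere in your argument.
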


The functor $C^\ch$ is called the homological Chevalley complex, and it has a simple presentation which we will make use of. Namely, for a Lie algebra $\mathfrak{g} \in \Lie^\ch(X)$, $C^\ch(\mathfrak{g})$ can be exhibited as a chain complex
\[
\xymatrix{
	\cdots \ar[r] & \Sym^{\ch, n} (\mathfrak{g}[1]) \ar[r]^<<<<<D & \Sym^{\ch, n-1} (\mathfrak{g}[1]) \ar[r]^>>>>>D & \cdots 
} \teq\label{eq:Chev_complex}
\]
where $D$ is defined in terms of the Lie bracket. For our purpose, we don't even need to know what $D$ is.

Note that we can apply the same construction for the $\otimesstar$-monoidal structure and get a functor
\[
	C^\star: \Liestar(\Ran X) \to \ComCoAlgstar(\Ran X),
\]
which has the same shape as the chain complex above, except that $\Sym^\ch$ is replaced by $\Sym^\star$.

\subsubsection{} From the construction of the monoidal structure, there exists a natural morphism $\otimesstar \to \otimesch$, which gives two forgetful functors
\begin{align*}
	\oblv^{\ch\to\star}_{\Lie}: \Liech(\Ran X) &\to \Liestar(\Ran X) \\
	\intertext{and}
	\oblv^{\star\to\ch}_{\ComCoAlg}: \ComCoAlgstar(\Ran X) &\to \ComCoAlgch(\Ran X).
\end{align*}

One can show that $\oblv^{\ch\to\star}_{\Lie}$ admits a left adjoint, the chiral envelope, denoted by
\[
	U^\ch: \Liestar(\Ran X) \to \Liech(\Ran X).
\]
Moreover, by~\cite[Thm. 6.4.2, Cor. 6.4.3]{francis_chiral_2011}, the adjoint pair $U^\ch \dashv \oblv^{\ch\to\star}_{\Lie}$ preserves the corresponding full subcategories of objects with support on the diagonals.
\begin{thm}[Francis, Gaitsgory]
	We have the following diagram of adjoint functors
	\[
	\xymatrix{
		\Liestar(\Ran X) \ar@<\arrdis>[r]^{U^\ch} & \ar@<\arrdis>[l]^{\oblv^{\ch\to\star}_{\Lie}} \Liech(\Ran X) \\
		\Liestar(X) \ar@<\arrdis>[r]^{U^\ch} \ar@{^(->}[u] & \ar@<\arrdis>[l]^{\oblv^{\ch\to\star}_{\Lie}} \Liech(X) \ar@{^(->}[u]
	}
	\]
\end{thm}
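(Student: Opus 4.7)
The plan is to reduce the statement to the corresponding result of Francis--Gaitsgory, verifying along the way that the sheaf-theoretic formalism developed in Section~\ref{sec:sheaves_on_prestacks} supplies all the base change inputs needed to transport their argument into the constructible setting of this paper. I would break the proof into two logically independent pieces: (a) producing the adjunction $U^\ch \dashv \oblv^{\ch\to\star}_{\Lie}$ at the level of $\Liestar(\Ran X)$ and $\Liech(\Ran X)$, and (b) showing that both adjoints preserve the respective subcategories of objects supported on the diagonal.

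For (a), the first task is to produce a lax symmetric monoidal natural transformation $\otimesstar \to \otimesch$ on $\Shv(\Ran X)$. Using the devissage triangle of Proposition~\ref{prop:devissage} applied to $\matheur{F}\boxtimes\matheur{G}$ along the diagonal embedding $i\colon \Ran X \hookrightarrow (\Ran X)^2$ and its open complement $j$, I obtain a map $\matheur{F}\boxtimes\matheur{G}\to j_*j^!(\matheur{F}\boxtimes\matheur{G})$; then applying $\union_!$ and using that $\union$ is finitary pseudo-proper (so that Proposition~\ref{prop:composition_!_*} identifies $\union_!\circ j_*$ with $(\union\circ j)_*$), I get the comparison $\union_!(\matheur{F}\boxtimes\matheur{G})\to (\union\circ j)_*j^!(\matheur{F}\boxtimes\matheur{G})$. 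Coherently extending this to $n$-fold tensor products produces the forgetful functors on Lie algebras and commutative coalgebras. The existence of the left adjoint $U^\ch$ is then a formal consequence of the adjoint functor theorem: both source and target are presentable, and $\oblv^{\ch\to\star}_{\Lie}$ preserves all limits, since these are computed on the underlying sheaves.

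For (b), the forgetful direction is immediate, since $\oblv^{\ch\to\star}_{\Lie}$ does not change the underlying sheaf. The nontrivial direction is preservation under $U^\ch$. Since $U^\ch$ is a left adjoint, it preserves colimits, and every object of $\Liestar(X)$ is a sifted colimit of free Lie algebras $\Free^\star_{\Lie}(\delta_!\matheur{F})$ for $\matheur{F}\in\Shv(X)$; so it suffices to treat the free case. By adjunction $U^\ch\Free^\star_{\Lie}(\delta_!\matheur{F})\simeq\Free^\ch_{\Lie}(\delta_!\matheur{F})$, whose underlying sheaf is computed through the chiral Chevalley complex~\eqref{eq:Chev_complex}. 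One then inspects the chiral symmetric powers $\Sym^{\ch,n}(\delta_!\matheur{F}[1])$ and verifies inductively that each remains supported on the appropriate small diagonals inside $\Ran X$; the base change and \Kunneth{} results (Propositions~\ref{prop:pseudo_proper_base_change}, \ref{prop:f_*_g^!_base_change} and Lemma~\ref{lem:Kunneth_pseudo_proper}) are exactly what is needed to push these support computations through.

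The main obstacle is the concrete support analysis of $\Sym^{\ch,n}(\delta_!\matheur{F}[1])$, i.e.\ recognizing that, despite being constructed from $\union_*$ and $j^!$ applied to external powers, the chiral symmetric powers of a sheaf concentrated on the diagonal only acquire support along diagonals in the Ran space. This is precisely the technical heart of~\cite[Thm.~6.4.2 and Cor.~6.4.3]{francis_chiral_2011}; I would import their argument verbatim, checking at each step that the formal inputs they invoke are available in the constructible setting via the results of Section~\ref{sec:sheaves_on_prestacks}. Once this is verified, the diagram of adjunctions in the statement follows immediately.
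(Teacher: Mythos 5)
The paper offers no proof of this statement: it is quoted directly from \cite{francis_chiral_2011} (Thm.~6.4.2 and Cor.~6.4.3), so you are attempting more than the text does. Your part (a) and the forgetful half of part (b) are fine. The gap is in the mechanism you propose for showing that $U^\ch$ preserves diagonal support. Membership in $\Liech(X)$ means support on the \emph{main} diagonal $X=\Conf_1 X\subset\Ran X$, i.e.\ lying in the essential image of $\delta_!$. But $(\delta_!\matheur{F})^{\otimesch n}=(\union\circ j)_*j^!\bigl((\delta_!\matheur{F})^{\boxtimes n}\bigr)$ is supported on $\Conf_n X$, which for $n\geq 2$ is disjoint from the main diagonal. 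Consequently $U^\ch\Free^\star_{\Lie}(\delta_!\matheur{F})\simeq\Free^\ch_{\Lie}(\delta_!\matheur{F})$, whose underlying object is $\bigoplus_n\bigl(\mathrm{Lie}(n)\otimes(\delta_!\matheur{F})^{\otimesch n}\bigr)_{\Sigma_n}$ --- not the Chevalley complex~\eqref{eq:Chev_complex}, which computes $C^\ch$ of a Lie algebra rather than the underlying object of a free one --- is emphatically \emph{not} an object of $\Liech(X)$. So after applying $U^\ch$ to the bar resolution of $\mathfrak{g}\in\Liestar(X)$, every single term fails to be diagonally supported; the diagonal support of $U^\ch(\mathfrak{g})$ emerges only in the geometric realization, by cancellation of the off-diagonal pieces. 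A termwise support analysis, however carefully you push it through the base change results of Section~\ref{sec:sheaves_on_prestacks}, cannot detect this.

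This is precisely why Francis--Gaitsgory do not argue by supports. Their route is to prove first that $C^\ch(U^\ch(\mathfrak{g}))$ is factorizable for $\mathfrak{g}\in\Liestar(X)$ (their Thm.~6.4.2), and then to deduce $U^\ch(\mathfrak{g})\in\Liech(X)$ from the Koszul duality equivalence $\Prim^\ch[-1]\colon\Fact(X)\to\Liech(X)$: a factorizable coalgebra has diagonally supported primitives. If you intend to ``import their argument verbatim,'' the statement whose formal inputs you must verify in the constructible setting is that factorizability statement, not the support claim your sketch rests on.
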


\subsubsection{} On the $\otimesstar$ side, we also have the Chevalley complex functor
\[
	C^\star: \Liestar(X) \to \ComCoAlgstar(\Ran X),
\]
and the interaction with $C^\ch$ and $U^\ch$ is given by~\cite[Prop. 6.1.2]{francis_chiral_2011} as follows.
\begin{prop}[Francis, Gaitsgory] \label{prop:Uch_Cch_Cstar}
	We have the following commutative diagram of categories
	\[
	\xymatrix{
		\Lie^\star(\Ran X) \ar[d]^{U^\ch} \ar[r]^<<<<<{C^\star} & \ComCoAlgstar(\Ran X) \ar[d]^{\oblv^{\star\to\ch}_{\ComCoAlg}} \\
		\Lie^\ch(\Ran X) \ar[r]^<<<<<{C^\ch} & \ComCoAlgch(\Ran X)
	}
	\]
\end{prop}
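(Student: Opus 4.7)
The plan is to verify the commutativity by reducing to free $\star$-Lie algebras. Both composites $C^\ch\circ U^\ch$ and $\oblv^{\star\to\ch}_{\ComCoAlg}\circ C^\star$ send a $\star$-Lie algebra $\mathfrak{g}$ to a $\ch$-commutative coalgebra of Chevalley type, and both preserve sifted colimits in $\mathfrak{g}$: the functor $U^\ch$ as a left adjoint, and the Chevalley complexes because they are built term-wise from symmetric powers, which commute with sifted colimits in characteristic $0$. Since $\Liestar(\Ran X)$ is generated under sifted colimits by the free objects $\Free_{\Lie}^\star(V)$ with $V\in\Shv(\Ran X)$, it suffices to produce a natural transformation between the two composites and check that it is an equivalence on such generators.

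To construct the natural transformation, use the monoidal natural transformation $\otimesstar\to\otimesch$ together with the unit of the adjunction $U^\ch\dashv\oblv^{\ch\to\star}_{\Lie}$: on the level of the presentation of $C^?$ in~\eqref{eq:Chev_complex}, these induce compatible maps $\Sym^{\star,n}(\mathfrak{g}[1])\to\Sym^{\ch,n}(U^\ch\mathfrak{g}[1])$, and by the very definition of $U^\ch$ as the chiral envelope, one checks that these intertwine the Chevalley differentials, which are determined by the respective Lie brackets. On free $\star$-Lie algebras, comparison of right adjoints yields $U^\ch\circ\Free_{\Lie}^\star\simeq\Free_{\Lie}^\ch$ as functors $\Shv(\Ran X)\to\Liech(\Ran X)$, both corepresenting the functor of mapping out of $V$ into the underlying object of a $\ch$-Lie algebra. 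The verification then reduces to the classical Koszul-duality calculation that, in either monoidal structure, the Chevalley complex of a free Lie algebra is quasi-isomorphic to the trivial coalgebra $\Lambda\oplus V[1]$ (via the standard bar-cobar resolution), a description manifestly preserved by $\oblv^{\star\to\ch}_{\ComCoAlg}$.

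The principal obstacle is verifying that the proposed natural transformation is compatible with the Chevalley differentials on the nose rather than merely with their associated gradeds; this amounts to tracking how the bracket on $U^\ch\mathfrak{g}$ is pinned down by the bracket on $\mathfrak{g}$ through the adjunction unit $\mathfrak{g}\to\oblv^{\ch\to\star}_{\Lie}U^\ch\mathfrak{g}$ and the transformation $\otimesstar\to\otimesch$, with all the homotopy coherence required in the $\infty$-categorical framework of~\cite{francis_chiral_2011}. A secondary technical point is making the sifted-colimit reduction rigorous, for which one invokes the general principle that $\infty$-categories of algebras over a sifted-colimit-preserving monad are generated under sifted colimits by their free algebras, applied here to the monads on $\Shv(\Ran X)$ defining $\Liestar(\Ran X)$ and $\Liech(\Ran X)$.
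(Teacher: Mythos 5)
First, a point of reference: the paper does not actually prove this proposition --- it is imported verbatim as \cite[Prop.~6.1.2]{francis_chiral_2011}, so the only ``proof'' in the text is the citation. Your argument is, in outline, the standard one for statements of this kind (and close to the one in \emph{loc.\ cit.}): both composites preserve sifted colimits; $\Liestar(\Ran X)$ is generated under sifted colimits by free algebras; $U^\ch\circ\Free_{\Lie}^\star\simeq\Free_{\Lie}^\ch$ because the forgetful functor $\Liech(\Ran X)\to\Shv(\Ran X)$ factors through $\Liestar(\Ran X)$ and left adjoints compose; and $C(\Free_{\Lie}(V))$ is the trivial coalgebra on $V[1]$ in either monoidal structure, a description visibly preserved by $\oblv^{\star\to\ch}_{\ComCoAlg}$. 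Two small corrections: in the non-unital setting used here the trivial coalgebra is $V[1]$ itself rather than $\Lambda\oplus V[1]$, and $\Lambda$ is not the unit of either monoidal structure on $\Shv(\Ran X)$ (both are non-unital). Neither affects the argument.

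The one genuine gap is the construction of the comparison map, which you correctly flag but do not close. In the $\infty$-categorical framework of \cite{francis_chiral_2011} the Chevalley functor is not presented as a strict complex with a differential one can manipulate by hand; the display~\eqref{eq:Chev_complex} only records the associated graded of a filtration, so ``checking that the maps $\Sym^{\star,n}(\mathfrak{g}[1])\to\Sym^{\ch,n}(U^\ch\mathfrak{g}[1])$ intertwine the differentials'' is not an available move. This matters because, without a globally defined natural transformation, agreement of the two composites on free objects does not suffice: one must also match them on morphisms between free algebras, equivalently compatibly with the face maps of the canonical bar resolution of $\mathfrak{g}$ by free algebras. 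There are two standard repairs. One is to produce the transformation by pure adjunction: a map $C^\ch\circ U^\ch\to\oblv^{\star\to\ch}_{\ComCoAlg}\circ C^\star$ is, via the adjunctions $C^\ch\dashv\Prim^\ch[-1]$ and $U^\ch\dashv\oblv^{\ch\to\star}_{\Lie}$, the same as a map $\id\to\oblv^{\ch\to\star}_{\Lie}\circ\Prim^\ch[-1]\circ\oblv^{\star\to\ch}_{\ComCoAlg}\circ C^\star$, and the latter is assembled from the unit $\id\to\Prim^\star[-1]\circ C^\star$ together with the termwise map of cosimplicial cobar objects $B^{\otimesstar\bullet}\to B^{\otimesch\bullet}$ induced by $\otimesstar\to\otimesch$; this is where the homotopy coherence actually lives, and there it is automatic because the comultiplication of $\oblv^{\star\to\ch}_{\ComCoAlg}B$ is by definition the composite of that of $B$ with $\otimesstar\to\otimesch$. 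The other is to dispense with the global transformation and instead use that restriction along the inclusion of the full subcategory of free algebras is fully faithful on geometric-realization-preserving functors, identifying the two restrictions as functors on that subcategory (not merely objectwise). With either repair your argument is complete; as written, the step you single out as the ``principal obstacle'' is indeed the missing piece rather than a routine verification.
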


Unlike the Chevalley complex, we don't have an intimate access to $U^\ch$. Thus, this proposition allows us to get a handle on $C^\ch\circ U^\ch$.

\subsubsection{} Let $\matheur{F}$ be a sheaf on $\Ran X$. Recall that we define
\[
	C^*_c(\Ran X, \matheur{F}) = \pi_! \matheur{F},
\]
where $\pi$ is the structure map
\[
	\pi: \Ran X \to \Spec k.
\]
Thanks to Lemma~\ref{lem:Kunneth_scheme_target}, $C^*_c(\Ran X, -)$ is monoidal with respect to $\otimesstar$ on $\Shv(\Ran X)$ and $\otimes$ on $\Vect_\Lambda$ (note that $\Shv(\Spec k) \simeq \Vect_\Lambda$). Thus, in particular, when $\matheur{F} \in \Liestar(\Ran X)$, $C^*_c(\Ran X, \matheur{F})$ has a natural structure as a Lie algebra.

We have the following result (which is~\cite[Prop. 6.3.6]{francis_chiral_2011}).
\begin{prop}[Francis, Gaitsgory] \label{prop:chiral_homology_Chevalley_complex}
	We have the following commutative diagram of categories
	\[
	\xymatrix{
		\Liestar(\Ran X) \ar[d]^{C^*_c(\Ran X, -)} \ar[r]^{U^\ch} & \Liech(\Ran X) \ar[r]^<<<<<{C^\ch} & \ComCoAlgch(\Ran X) \ar[d]^{C^*_c(\Ran, -)} \\
		\Lie(\Vect_\Lambda) \ar[rr]^{\oblv_{\ComCoAlg}\circ C} && \Vect_\Lambda 
	}
	\]
\end{prop}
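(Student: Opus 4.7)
The plan is to reduce the statement to the intertwining relation of Proposition~\ref{prop:Uch_Cch_Cstar} combined with the symmetric monoidality of $\pi_! = C^*_c(\Ran X,-)$ for $\otimesstar$. First I would invoke Proposition~\ref{prop:Uch_Cch_Cstar} to replace $C^\ch \circ U^\ch$ by $\oblv^{\star\to\ch}_{\ComCoAlg} \circ C^\star$. Since forgetting the $\ch$-comultiplication does not affect the underlying sheaf, applying $\pi_!$ and then $\oblv_{\ComCoAlg}$ to the right-hand side is the same as applying $\pi_!$ to $C^\star(\mathfrak{g})$ and then forgetting down to $\Vect_\Lambda$. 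Thus the proposition reduces to proving that the diagram
\[
\xymatrix{
    \Liestar(\Ran X) \ar[d]_{\pi_!} \ar[r]^<<<<<{C^\star} & \ComCoAlgstar(\Ran X) \ar[d]^{\pi_!} \\
    \Lie(\Vect_\Lambda) \ar[r]^<<<<<{C} & \ComCoAlg(\Vect_\Lambda)
}
\]
commutes, after which we post-compose with $\oblv_{\ComCoAlg}$ on the right.

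Next, I would invoke Lemma~\ref{lem:Kunneth_scheme_target} (K\"unneth for maps to a scheme) applied to $\pi: \Ran X \to \Spec k$. Since $\otimesstar$ is defined as $\union_!(-\boxtimes -)$ and $\pi \circ \union_n = \pi^{\times n}$, this lemma shows that $\pi_!$ is symmetric monoidal when the source is equipped with $\otimesstar$ and the target with $\otimes$. Consequently $\pi_!$ lifts canonically to functors on Lie algebras and on commutative coalgebras, matching the vertical arrows above.

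The key step is then to use the explicit presentation~\eqref{eq:Chev_complex} of the Chevalley complex. For $\mathfrak{g} \in \Liestar(\Ran X)$, the object $C^\star(\mathfrak{g})$ is realized as a chain complex whose terms are $\Sym^{\star,n}(\mathfrak{g}[1])$ and whose differentials are built functorially from the Lie bracket and the $\otimesstar$-monoidal structure. Because $\pi_!$ is symmetric monoidal for $\otimesstar$, we obtain canonical equivalences $\pi_!\,\Sym^{\star,n}(\mathfrak{g}[1]) \simeq \Sym^n((\pi_!\mathfrak{g})[1])$, and these equivalences are compatible with the differentials (which are constructed from the Lie bracket and symmetric powers in a universal operadic manner). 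Assembling these equivalences yields $\pi_! \circ C^\star \simeq C \circ \pi_!$, and post-composing with $\oblv_{\ComCoAlg}$ gives the desired commutativity.

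The main obstacle is the compatibility of $\pi_!$ with the Chevalley differentials: one must promote the symmetric monoidality of $\pi_!$ from a plain functorial statement to one of symmetric monoidal $\infty$-functors, so that induced maps on Lie algebras, cocommutative coalgebras, and the Chevalley construction all carry through coherently. Once this compatibility is in hand, the proof is essentially a diagram chase combining Proposition~\ref{prop:Uch_Cch_Cstar} with the term-by-term analysis of~\eqref{eq:Chev_complex}.
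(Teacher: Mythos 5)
The paper offers no proof of this proposition—it is imported directly from \cite[Prop. 6.3.6]{francis_chiral_2011}—and your argument reconstructs essentially the proof given in that reference: reduce via Proposition~\ref{prop:Uch_Cch_Cstar} to the commutation of $C^*_c(\Ran X,-)$ with $C^\star$, which holds because $C^*_c(\Ran X,-)$ is a continuous symmetric monoidal functor for $\otimesstar$ by Lemma~\ref{lem:Kunneth_scheme_target}. The coherence point you flag at the end is the genuine content, and it is exactly what the operadic formulation of loc.\ cit.\ supplies: $C^\star$ and $C$ are the same universal construction (a colimit built functorially from the Lie operad and the symmetric monoidal structure) applied in two different symmetric monoidal $\infty$-categories, so any continuous symmetric monoidal functor intertwines them without a term-by-term verification of the Chevalley differentials.
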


\subsection{$\Free_{E_X}$ via $\Free_{\Lie}$}
\subsubsection{} The category $\Shv(X)$ is a symmetric monoidal category with respect to the $\otimes$-tensor product of sheaves. Thus, we can define
\[
	\Lie(X) = \Lie(\Shv(X)^\otimes).
\]

Recall that $\delta: X \to \Ran X$ is the natural map. Since this morphism is schematic, we have a pair of adjoint functors $\delta^* \dashv \delta_*$.

\begin{lem}
	The functor $\delta^*$ is symmetric monoidal, with respect to the $\otimesstar$-monoidal structure on $\Shv(\Ran X)$ and the $\otimes$-monoidal structure on $\Shv(X)$.	
\end{lem}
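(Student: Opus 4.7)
The plan is to reduce the symmetric-monoidal compatibility of $\delta^*$ to a single base change identity, namely $\delta^* \union_! \simeq (\delta, \delta)^*$, and then verify this base change using the presentation of $\Ran X$ as a colimit of schemes. For $\matheur{F}, \matheur{G} \in \Shv(\Ran X)$, Proposition~\ref{prop:f^*_boxtimes} (applicable since $\delta$ is schematic and $\Ran X$ is a pseudo-scheme) gives $\delta^* \matheur{F} \boxtimes \delta^* \matheur{G} \simeq (\delta \times \delta)^*(\matheur{F} \boxtimes \matheur{G})$, and so
\[
\delta^* \matheur{F} \otimes \delta^* \matheur{G} = \diag_X^*(\delta^* \matheur{F} \boxtimes \delta^* \matheur{G}) \simeq ((\delta \times \delta) \circ \diag_X)^*(\matheur{F} \boxtimes \matheur{G}) = (\delta, \delta)^*(\matheur{F} \boxtimes \matheur{G}).
\]
On the other hand, $\delta^*(\matheur{F} \otimesstar \matheur{G}) = \delta^* \union_!(\matheur{F} \boxtimes \matheur{G})$. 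The relevant Cartesian square is
\[
\xymatrix{
X \ar[r]^{(\delta, \delta)} \ar[d]_{\id} & (\Ran X)^2 \ar[d]^{\union} \\
X \ar[r]^\delta & \Ran X,
}
\]
whose Cartesian-ness is the elementary observation that $\{x\} = A \cup B$ with $A, B$ nonempty subsets of $X(S)$ forces $A = B = \{x\}$.

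To carry out the base change despite $\union$ not being schematic, I pass through the presentation $\Ran X \simeq \colim_{L \in (\fSet^\surj)^\op} X^L$, over which each $\ins_L : X^L \to \Ran X$ is pseudo-proper. Combining Lemma~\ref{lem:Kunneth_pseudo_proper}, the continuity of $\boxtimes$, and the identification $\union \circ (\ins_L \times \ins_M) = \ins_{L \sqcup M}$ (under $X^L \times X^M \simeq X^{L \sqcup M}$) gives
\[
\union_!(\matheur{F} \boxtimes \matheur{G}) \simeq \colim_{L, M} \ins_{L \sqcup M!}(\matheur{F}_L \boxtimes \matheur{G}_M).
\]
For each summand, the Cartesian square
\[
\xymatrix{
X \ar[r]^{\id} \ar[d]_{\diag_{L \sqcup M}} & X \ar[d]^\delta \\
X^{L \sqcup M} \ar[r]^{\ins_{L \sqcup M}} & \Ran X
}
\]
has the schematic morphism $\delta$ as its right vertical arrow, so Proposition~\ref{prop:f_*_g^!_base_change} passed to left adjoints yields $\delta^* \ins_{L \sqcup M!} \simeq \diag_{L \sqcup M}^*$. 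Interchanging the colimit with $\delta^*$ produces
\[
\delta^* \union_!(\matheur{F} \boxtimes \matheur{G}) \simeq \colim_{L, M} \diag_{L \sqcup M}^*(\matheur{F}_L \boxtimes \matheur{G}_M);
\]
the same colimit formula for $(\delta, \delta)^*(\matheur{F} \boxtimes \matheur{G})$ comes out of Corollary~\ref{cor:f^*_as_colimit} applied to the schematic morphism $(\delta, \delta)$ against the presentation $(\Ran X)^2 \simeq \colim_{L, M} X^L \times X^M$, whose pullback along $\ins_L \times \ins_M$ is $X \xrightarrow{\diag_{L \sqcup M}} X^L \times X^M$.

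Higher coherences are obtained by the same recipe applied to the $n$-fold Cartesian square over $\union_n : (\Ran X)^n \to \Ran X$, which remains Cartesian because $\{x\} = A_1 \cup \cdots \cup A_n$ with nonempty $A_i$ forces each $A_i = \{x\}$, and compatibility with the permutation and associator morphisms follows from the naturality of Proposition~\ref{prop:f_*_g^!_base_change} and the symmetry of the Cartesian squares in play. The main obstacle is not any individual identity above, which is essentially forced by base change and Künneth, but the bookkeeping required to assemble the family into a genuinely coherent symmetric monoidal functor; this is the step where one must work internally to the formalism used to encode $\otimesstar$ and $\otimes$ as symmetric monoidal structures.
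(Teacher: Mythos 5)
Your proof is correct and follows essentially the same route as the paper: the same Cartesian square exhibiting $X$ as the fiber product of $\delta$ against $\union$, Proposition~\ref{prop:f^*_boxtimes} to commute $(\delta\times\delta)^*$ past $\boxtimes$, and the $f^*$-vs-$g_!$ base change to compute $\delta^*\union_!$. The one place you diverge is unnecessary: you write that you must unwind $\union_!$ over the colimit presentation of $\Ran X$ ``despite $\union$ not being schematic,'' but the schematic hypothesis in Proposition~\ref{prop:f_*_g^!_base_change} is on the \emph{vertical} leg of the square, which here is $\delta$ (schematic), not $\union$; the proposition therefore applies directly to the single Cartesian square and gives $\delta^*\union_!\simeq\Delta^*(\delta\times\delta)^*$ in one step, which is exactly what the paper does. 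Your term-by-term verification via Lemma~\ref{lem:Kunneth_pseudo_proper} and Corollary~\ref{cor:f^*_as_colimit} amounts to reproving that instance of the base change by hand --- valid, but extra work. Your closing remark on higher coherences is fair; the paper's proof likewise only exhibits the binary constraint and leaves the coherent enhancement to the formalism.
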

\begin{proof}
	We have the following Cartesian square
	\[
	\xymatrix{
		X \ar[r]^<<<<<{\Delta} \ar@{=}[d] & X\times X \ar[r]^<<<<<{\delta\times\delta} & \Ran X \times \Ran X \ar[d]^\union \\
		X \ar[rr]^\delta && \Ran X
	}
	\]
	Let $\matheur{F}, \matheur{G} \in \Shv(\Ran X)$. Then
	\[
		\delta^*(\matheur{F} \otimesstar \matheur{G}) \simeq \delta^* \union_! (\matheur{F}\boxtimes \matheur{G}) \simeq \Delta^* (\delta\times\delta)^*(\matheur{F}\boxtimes \matheur{G}) \simeq \Delta^*(\delta^*\matheur{F}\boxtimes \delta^* \matheur{G}) \simeq \delta^*\matheur{F}\otimes \delta^*\matheur{G},
	\]
	where the second and third equivalences are due to Proposition~\ref{prop:f_*_g^!_base_change} and Proposition~\ref{prop:f^*_boxtimes}.
\end{proof}

The fact that $\delta^*$ is monoidal implies that $\delta_*$ is right-lax monoidal. Thus, we have a pair of adjoint functors
\[
	\delta^*: \Liestar(\Ran X) \rightleftarrows \Lie(X): \delta_*.
\]

\begin{prop}[Beilinson, Drinfel'd]
	The adjoint pair $\delta^* \dashv \delta_*$ induces an equivalence of categories
	\[
		\Liestar(X) \simeq \Lie(X).
	\]
\end{prop}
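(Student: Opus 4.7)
The plan is to prove this in two stages: first establish the underlying sheaf-level equivalence between $\Shv(X)$ and the full subcategory $\Shv_\delta(\Ran X) \subset \Shv(\Ran X)$ of sheaves supported on the diagonal, then transfer this equivalence to Lie algebras using the monoidality of $\delta^*$ and conservativity of the forgetful functor.

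First, I would verify that $\delta: X \to \Ran X$ is proper. It is schematic (as noted) and, on $T$-points, sends $x \mapsto \{x\}$, which is an injection of $X(T)$ into the set of finite subsets of $X(T)$; hence $\delta$ is fully faithful. Moreover, the base change of $\delta$ along any $\ins_I: X^I \to \Ran X$ is the diagonal $X \hookrightarrow X^I$, a closed immersion, and since any scheme $S \to \Ran X$ locally factors through some such $X^I$, it follows that $\delta$ is proper. Corollary~\ref{cor:id_fully_faithful} then yields $\delta^*\delta_* \simeq \id_{\Shv(X)}$, while Proposition~\ref{prop:coincidence_f_*_f_!_proper} gives $\delta_* \simeq \delta_!$. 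In particular, $\delta_*$ is fully faithful and the adjunction $\delta^* \dashv \delta_*$ restricts to an equivalence between $\Shv(X)$ and $\Shv_\delta(\Ran X)$.

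By the preceding lemma, $\delta^*: (\Shv(\Ran X), \otimesstar) \to (\Shv(X), \otimes)$ is symmetric monoidal, so its right adjoint $\delta_*$ is right-lax symmetric monoidal. Both functors therefore transport Lie-algebra structures, yielding the adjunction $\delta^*: \Liestar(\Ran X) \rightleftarrows \Lie(X): \delta_*$ already displayed in the text. This adjunction restricts to the full subcategory $\Liestar(X) \subset \Liestar(\Ran X)$, because for any $\mathfrak{h} \in \Lie(X)$ the underlying sheaf of $\delta_*\mathfrak{h}$ is $\delta_!\mathfrak{h}$ (since $\delta_* \simeq \delta_!$), which lies in the essential image of $\delta_!$ by the very definition of $\Liestar(X)$.

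To conclude that the restricted pair $\delta^*: \Liestar(X) \rightleftarrows \Lie(X): \delta_*$ is an equivalence, I would use that the forgetful functors from Lie algebras to underlying sheaves are conservative, so it suffices to check the unit and counit are equivalences on underlying sheaves. The counit $\delta^*\delta_* \to \id_{\Lie(X)}$ at $\mathfrak{h} \in \Lie(X)$ has underlying map equal to the counit of the sheaf-level adjunction, an equivalence by the first step. For $\mathfrak{g} \in \Liestar(X)$ with underlying sheaf $\delta_* h$, the unit $\mathfrak{g} \to \delta_*\delta^*\mathfrak{g}$ has underlying map $\delta_* h \to \delta_*\delta^*\delta_* h \simeq \delta_* h$, again an equivalence via $\delta^*\delta_* \simeq \id$. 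The only substantive obstacle in the whole argument is the verification that $\delta$ is proper; once that is in hand, the rest is a formal transfer of an equivalence through a monoidal adjunction.
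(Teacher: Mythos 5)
Your proof is correct and follows essentially the same route as the paper: the paper likewise deduces $\delta^*\delta_*\simeq\id$ from Corollary~\ref{cor:id_fully_faithful}, concludes that $\delta_*$ is fully faithful, and notes that it is essentially surjective onto the diagonally supported Lie algebras. You simply spell out the details the paper leaves implicit (the properness of $\delta$, the transfer through the monoidal adjunction, and the unit/counit check via conservativity of the forgetful functor).
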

\begin{proof}
	Since $\delta^*\delta_* \simeq \id$ by Corollary~\ref{cor:id_fully_faithful}, we know that $\delta_*$ is fully-faithful. But it's clear that $\delta_*$ is essentially surjective. Thus, we are done.
\end{proof}

\subsubsection{} Observe that we have the following commutative diagram
\[
\xymatrix{
	\Lie(X) \ar[dr]_{C^*_c(X, -)} \ar@{=}[r]^{\delta_! \simeq \delta_*}_{\delta^*} & \Liestar(X) \ar[d]^{C^*_c(\Ran X, -)} \\
	& \Lie(\Vect_\Lambda)
}
\]
since $C^*_c(\Ran X, -)$ is symmetric monoidal with respect to the $\star$-monoidal structures. Thus, 
\[
	C^*_c(X, -): \Lie(X) \to \Vect_\Lambda
\]
factors through
\[
	C^*_c(X, -): \Lie(X) \to \Lie(\Vect_\Lambda).
\]
And hence, we have the following corollary to Proposition~\ref{prop:chiral_homology_Chevalley_complex}.

\begin{cor} \label{cor:chiral_homology_Chevalley_complex_Lie(X)}
	We have the following commutative diagram of categories
	\[
	\xymatrix{
		\Lie(X) \ar[d]^{C^*_c(X, -)} \ar[rr]^<<<<<<<<<<<{\oblv_{\ComCoAlg}^{\star\to\ch} \circ C^\star \circ \delta_*} && \Fact(\Ran X) \ar[d]^{C^*_c(\Ran X, -)} \\
		\Lie(\Vect_\Lambda) \ar[rr]^{C} && \Vect_\Lambda
	}
	\]
\end{cor}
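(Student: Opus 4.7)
The plan is to assemble the statement by pasting together the three commutative diagrams that have just been recorded, so that no new geometric input is required. Let $\mathfrak{g} \in \Lie(X)$, and trace the composition $C^*_c(\Ran X, -) \circ \oblv^{\star\to\ch}_{\ComCoAlg} \circ C^\star \circ \delta_*$ applied to $\mathfrak{g}$.

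First I would rewrite the inner part of the composite using Proposition~\ref{prop:Uch_Cch_Cstar}, which identifies
\[
    \oblv^{\star\to\ch}_{\ComCoAlg} \circ C^\star \simeq C^\ch \circ U^\ch
\]
as functors $\Liestar(\Ran X) \to \ComCoAlgch(\Ran X)$. Applied to $\delta_*\mathfrak{g} \in \Liestar(X) \subset \Liestar(\Ran X)$, this converts the outer composition into $C^*_c(\Ran X, -) \circ C^\ch \circ U^\ch \circ \delta_*$.

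Next I would invoke Proposition~\ref{prop:chiral_homology_Chevalley_complex}, which provides the commutativity
\[
    C^*_c(\Ran X, -) \circ C^\ch \circ U^\ch \simeq \oblv_{\ComCoAlg} \circ C \circ C^*_c(\Ran X, -)
\]
as functors $\Liestar(\Ran X) \to \Vect_\Lambda$. Composing with $\delta_*$ on the right, the expression becomes $\oblv_{\ComCoAlg} \circ C \circ C^*_c(\Ran X, -) \circ \delta_*$.

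Finally, the small triangle displayed immediately before the corollary gives $C^*_c(\Ran X, -) \circ \delta_* \simeq C^*_c(X, -)$ as functors $\Lie(X) \to \Lie(\Vect_\Lambda)$, and substituting this in yields $\oblv_{\ComCoAlg} \circ C \circ C^*_c(X, -)$, which is exactly the composite along the other side of the diagram (with the understanding that the bottom arrow $C$ denotes $\oblv_{\ComCoAlg} \circ C$, as dictated by the target category $\Vect_\Lambda$). There is no real obstacle here beyond being careful that the relevant restrictions of functors to the full subcategories $\Liestar(X)$ and $\Fact(\Ran X)$ are the ones asserted in the earlier theorems; this is automatic from their statements. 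Thus the argument is essentially a bookkeeping exercise combining the three previously established commutativities.
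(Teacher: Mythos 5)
Your proposal is correct and follows exactly the route the paper intends: it combines Proposition~\ref{prop:Uch_Cch_Cstar}, Proposition~\ref{prop:chiral_homology_Chevalley_complex}, and the triangle identifying $C^*_c(\Ran X,-)\circ\delta_*$ with $C^*_c(X,-)$ on Lie algebras, which is precisely why the paper states the result as a corollary of Proposition~\ref{prop:chiral_homology_Chevalley_complex} following that discussion. Your remark that the bottom arrow $C$ should be read as $\oblv_{\ComCoAlg}\circ C$ is also the correct reading.
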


\subsubsection{} Consider the following diagram
\[
\xymatrix{
	\Shv(X) \ar@<-\arrdis>[d]_{\Free_{\Lie}\circ [-1]} \ar@{=}[r]^<<<<<T_<<<<<{\delta^!} & \Fact(\Conf X) \ar@<\arrdis>[r]^{g_!} & \ar@<\arrdis>[l]^{g^!} \Fact(\Ran X) \ar@<\arrdis>[d]^{\Prim[-1]} \\
	\Lie(X) \ar@<-\arrdis>[u]_{\oblv_{\Lie}[1]} \ar@{=}[r]_{\delta_*}^{\delta^*} & \Liestar(X) \ar@<\arrdis>[r]^{U^\ch} & \ar@<\arrdis>[l]^{\oblv^{\ch \to\star}_{\Lie}} \Liech(X) \ar@<\arrdis>[u]^{C^\ch}
}
\]

Since the compositions of the right adjoint functors in this diagram in both directions are $!$-restriction to the diagonal
\[
	\delta: X \to \Ran X,
\]
the right adjoint functors form a commutative diagram. The same is thus true for the left adjoint functors. In particular, we have the following statement, which provides an alternative way to construct $\Free_{E_X}$.

\begin{prop} \label{prop:2_ways_E_X}
	For any $\matheur{F} \in \Shv(X)$, we have a natural equivalence
	\[
		\Free_{E_X} \matheur{F} \simeq C^\ch (\Free_{\Lie} (\matheur{F}[-1])) \simeq \oblv_{\ComCoAlg}^{\star\to\ch}C^\star(\Free_{\Lie} (\matheur{F}[-1])).
	\]
\end{prop}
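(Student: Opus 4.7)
The plan is to realize both sides of the asserted equivalence as the left adjoint of a common functor, namely $\delta^!: \Fact(\Ran X) \to \Shv(X)$, and then appeal to uniqueness of adjoints. The diagram of adjunctions displayed just before the statement has all the needed functors; what must be checked is precisely that the two compositions of \emph{right} adjoints from $\Fact(\Ran X)$ down to $\Shv(X)$ coincide.

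First, for the top route, the composition $\delta^!_{\Conf X}\circ g^!$ is, by functoriality of shriek-pullback, nothing but $(g\circ \delta)^! \simeq \delta^!_{\Ran X}$, since $g\circ \delta$ is the tautological diagonal $X \to \Ran X$. Thus the right adjoint of $\Free_{E_X} = g_! \circ T$ is $\delta^!$. For the bottom route, we must identify
\[
\oblv_{\Lie}[1]\circ \delta^* \circ \oblv^{\ch\to\star}_{\Lie} \circ \Prim^\ch[-1] : \Fact(\Ran X) \to \Shv(X)
\]
with the same functor $\delta^!$. Here $\Prim^\ch[-1]$ is an equivalence onto $\Liech(X)\subset \Liech(\Ran X)$, and by construction it fits into chiral Koszul duality so that its underlying-sheaf shadow restricted to the diagonal is just $[-1]$-shift of $\delta^!$; the subsequent $\oblv^{\ch\to\star}_{\Lie}$ is purely forgetful at the level of underlying sheaves, the functor $\delta^*$ agrees with $\delta^!$ on the essential image of $\delta_*$ (Corollary~\ref{cor:id_fully_faithful}), and finally $\oblv_{\Lie}[1]$ cancels the earlier shift. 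So the bottom composition is also $\delta^!$.

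Once both compositions are identified with $\delta^!$, the equality of left adjoints is formal: $\Free_{E_X}\simeq C^\ch\circ U^\ch \circ \delta_* \circ \Free_{\Lie}\circ [-1]$, which is the first claimed equivalence. The second equivalence then follows immediately from Proposition~\ref{prop:Uch_Cch_Cstar}, which tells us $C^\ch\circ U^\ch \simeq \oblv^{\star\to\ch}_{\ComCoAlg}\circ C^\star$ as functors $\Liestar(\Ran X)\to \ComCoAlgch(\Ran X)$; specializing to $\Liestar(X)$ and precomposing with $\delta_*\circ \Free_{\Lie}[-1]$ gives the third term.

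The main obstacle is the careful bookkeeping in the second step, in particular checking that the shifts encoded in $\Prim^\ch[-1]$ and $\oblv_{\Lie}[1]$ really do cancel, and that the underlying-sheaf behaviour of the chiral Koszul duality equivalence on objects supported on the diagonal is what one expects. Both points are built into the chiral Koszul formalism of~\cite{francis_chiral_2011} (they are essentially the compatibility of the Chevalley complex~\eqref{eq:Chev_complex} with the forgetful functor $\delta^!$), so the verification is mostly a matter of unwinding conventions; no new geometric input is required beyond the results already collected in \S\ref{sec:sheaves_on_prestacks} and the recollections at the start of \S\ref{sec:alternative_construction_Lie}.
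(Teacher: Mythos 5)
Your argument is correct and is essentially the paper's own proof: the paper likewise observes that both compositions of right adjoints in the displayed diagram are $!$-restriction along the diagonal $\delta: X \to \Ran X$, deduces that the left adjoints therefore commute, and then invokes Proposition~\ref{prop:Uch_Cch_Cstar} for the second equivalence. Your extra bookkeeping on the bottom route (the cancellation of shifts and the identification $\delta^* \simeq \delta^!$ on the essential image of $\delta_*$ via Corollary~\ref{cor:id_fully_faithful}) just makes explicit what the paper leaves implicit.
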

\begin{proof}
	The first equivalence is due to the discussion above, whereas the second one is due to Proposition~\ref{prop:Uch_Cch_Cstar}.
\end{proof}

We will finish this subsection with the following example where the Lie structure on $C^*_c(X, -)$ is easy to compute.
\begin{expl} \label{expl:example_Lie_constant_sheaves} Let $f$ denote the structure map
	\[
		f: X \to \Spec k.
	\]
	The fact that
	\[
		C^*_c(X, -) = f_! : \Shv(X) \to \Vect_\lambda
	\]
	is right lax monoidal with respect to the $\otimes$-monoidal structures on both $\Shv(X)$ and $\Vect_\Lambda$ provides an alternative way to see how this functor automatically upgrades to a functor on the level of Lie algebras.
	
	Since $f^*$ is symmetric monoidal, $f^*$ also upgrades to a functor
	\[
		f^*: \Lie(\Vect_\Lambda) \to \Lie(X).
	\]
	Moreover, one can check easily that $f^*$ commutes with $\Free_{\Lie}$, i.e. the following diagram commutes
	\[
	\xymatrix{
		\Shv(X) \ar[r]^<<<<<{\Free_{\Lie}} & \Lie(X) \\
		\Vect_\Lambda \ar[u]_{f^*} \ar[r]^<<<<<<{\Free_{\Lie}} & \Lie(\Vect_\Lambda) \ar[u]_{f^*}
	}
	\]
	Indeed, this is because the right adjoints commute.
	
	Let $V\in \Vect_\Lambda$, and let $V_X$ denote the constant sheaf on $X$ with value in $V$. Then by projection formula
	\[
		f_! \Free_{\Lie} V_X \simeq f_! \Free_{\Lie} f^* V \simeq f_! f^* \Free_{\Lie} V \simeq f_!(\Lambda \otimes f^* \Free_{\Lie} V) \simeq  f_!\Lambda\otimes \Free_{\Lie} V.
	\]
	Now, by cup product, $f_!\Lambda$ is a commutative dg-algebra, and the Lie structure on $f_! \Free_{\Lie} V_X$ is thus just the following
	\[
		[a\otimes v, b\otimes w] = (-1)^{|v||b|} ab [v, w],
	\]
	since all the Lie/algebra structures in sight are induced by the fact that $f_!$ is right-lax monoidal, and the equivalence above is compatible with this right-lax structure. We refer the reader to~\cite[Sect. IV.2.1.2]{gaitsgory_study_????} for a discussion of how to tensor a commutative algebra with a Lie algebra.
	
	Note that it's crucial that we are working in characteristic $0$ here. Indeed, a priori, $f_!\Lambda$ is an $E_\infty$-algebra. However, in characteristic 0, by~\cite{kriz_operads_????}, one can functorially replace it by a commutative dg-algebra.
\end{expl}

\subsection{Gradings}
Each of the two constructions of free $E_X$-algebras given above carries a natural grading. The main result of this subsection states that these two gradings are the same.

We will start with a general remark on graded objects (see also~\cite[Sect. IV.2.1.3]{gaitsgory_study_????}). This allows us to construct two natural gradings on free $E_X$-algebras, attached to the two constructions given above.

\subsubsection{} Let $\matheur{C}$ be any cocomplete symmetric monoidal stable $\infty$-category such that colimits distribute over the tensor product. Then we denote $\matheur{C}_{\gr}$ the category of graded objects in $\matheur{C}$. More formally, we regard
\[
	\mathbb{N} = \{1, 2, 3, \cdots\}
\]
as a discrete category, i.e. the only morphisms are the identities, and define
\[
	\matheur{C}_\gr = \Fun(\mathbb{N}, \matheur{C}).
\]
Thus, an object in $\matheur{C}_\gr$ could be written as $(c_i)_{i\in \mathbb{N}}$, where $c_i\in \matheur{C}$.

Note that $\matheur{C}_\gr$ is equipped with a natural symmetric monoidal structure
\[
	(c_i)_{i\in \mathbb{N}} \otimes (d_i)_{i\in \mathbb{N}} \simeq \left(\bigoplus_{n=i+j} c_i\otimes c_j\right)_n.
\]
Moreover, the forgetful functor
\begin{align*}
	\oblv_\gr: \matheur{C}_\gr &\to \matheur{C} \\
(c_i)_{i\in \mathbb{N}} &\mapsto \bigoplus_i c_i
\end{align*}
is conservative, and also compatible with the monoidal structures on both sides.

\subsubsection{} \label{subsubsec:upgrade_to_the_graded_level} We can now carry out the two constructions of free $E_X$-algebras at the graded level, i.e. with $\Shv_\gr$ instead of $\Shv$. Indeed, we have the following commutative diagram
\[
\xymatrix{
	\Shv(X)_\gr \ar@<-\arrdis>[d]_{\Free_{\Lie}\circ [-1]} \ar@{=}[r]^<<<<<T_<<<<<{\delta^!} & \Fact(\Conf X)_\gr \ar@<\arrdis>[r]^{g_!} & \ar@<\arrdis>[l]^{g^!} \Fact(\Ran X)_\gr \ar@<\arrdis>[d]^{\Prim[-1]} \\
	\Lie(X)_\gr \ar@<-\arrdis>[u]_{\oblv_{\Lie}[1]} \ar@{=}[r]_{\delta_*}^{\delta^*} & \Liestar(X)_\gr \ar@<\arrdis>[r]^{U^\ch} \ar[ur]^<<<<<<<{\oblv^{\star \to \ch}\circ C^\star} & \ar@<\arrdis>[l]^{\oblv^{\ch \to\star}_{\Lie}} \Liech(X)_\gr \ar@<\arrdis>[u]^{C^\ch}
} \teq\label{eq:2_constructions_graded_level}
\]

Now, the compositions of the right adjoint functors commute for the same reason as before. The same is thus true for the compositions of the left adjoint functors. It's also easy to check that $\oblv_\gr$ commutes with the compositions of the left adjoint functors, so that the construction at the graded level agrees with the non-graded version. In other words, the equivalence of the two constructions of free $E_X$-algebras upgrades to an equivalence at the graded level.


\subsubsection{Cardinality grading} Let $\matheur{F}\in \Shv(X)$. Then we can view $\matheur{F} \in \Shv(X)_\gr$ by putting $\matheur{F}$ at degre 1 (and zero everywhere else). Following the top row of~\eqref{eq:2_constructions_graded_level}, we obtain a natural grading on
\[
	\Free_{E_X} \matheur{F} \simeq g_!(T(\matheur{F}))
\]
which we will call the \emph{cardinality grading}.

More concretely, as an element in $\Shv(\Conf X)$, we can write
\[
	T\matheur{F} \simeq \bigoplus_{n \geq 1} (T \matheur{F})_n \teq\label{eq:grading_TF}
\]
where
\[
	\Supp (T\matheur{F})_n \subset \Conf_n X.
\]
is a sheaf on $\Conf X$, whose support is $\Conf_n X$ (see \S\ref{subsubsec:1st_stage_T_Free_E_X} for the definition of $T\matheur{F}$).

Thus, as a sheaf on $\Ran X$, we also have the following decomposition into a direct sum
\[
	\Free_{E_X} \matheur{F} \simeq \bigoplus_{n\geq 1} (\Free_{E_X} \matheur{F})_n, \teq\label{eq:decomposition_cardinality}
\]
where
\[
	(\Free_{E_X} \matheur{F})_n \simeq g_! (T\matheur{F})_n.
\]

\subsubsection{Lie grading} Again, let $\matheur{F} \in \Shv(X)$ and view it as an object of $\Shv(X)_\gr$ as above. Following the bottom row, we obtain another natural grading on
\[
	\Free_{E_X} \matheur{F} \simeq C^\ch (U^\ch (\delta_* \Free_{\Lie}(\matheur{F}[-1]))) \simeq \oblv^{\star \to \ch}(C^\star (\delta_* (\Free_{\Lie}(\matheur{F}[-1]))))
\]
which we will call the \emph{Lie grading}.

More concretely, $\Free_{\Lie} (\matheur{F}[-1])$ has a natural structure as a graded Lie algebra
\[
	\Free_{\Lie} (\matheur{F}[-1]) \simeq \bigoplus_{w\geq 1} (\Free_{\Lie} (\matheur{F}[-1]))_w, \teq\label{eq:grading_Free_Lie}
\]
where
\[
	(\Free_{\Lie} (\matheur{F}[-1]))_1 \simeq \matheur{F}[-1].
\]

The functor $C^\star$ then induces a natural grading on the resulting object and we have
\[
	C^\star(\Free_{\Lie}(\matheur{F}[-1])) \simeq \bigoplus_{w \geq 1} C^\star(\Free_{\Lie}(\matheur{F}[-1]))_w.
\]

\begin{rmk} This decomposition has the following concrete realization, though we will not need this in the sequel. Applying the same reasoning as above to the functor $\Sym^n$ for any $n\geq 1$, we see that
\[
	\Sym^n \mathfrak{g} = \bigoplus_{w \geq 1} (\Sym^n \mathfrak{g})_{w},
\]
where the case $w=n$ has a particularly nice description
\[
	(\Sym^n \mathfrak{g})_n = \Sym^n (\mathfrak{g}_1).
\]
Informally, $(\Sym^n \mathfrak{g})_w$ is spanned by monomials of degree $n$ in $\mathfrak{g}$ such that the sum of the weights is $w$.

This gives us a concrete presentation of $C(\mathfrak{g})_w$ as a chain complex as follows (see also~\eqref{eq:Chev_complex})
\[
\xymatrix{
	0 \ar[r] & \Sym^{w} (\mathfrak{g}_1[1]) \ar[r] & (\Sym^{w-1} (\mathfrak{g}[1]))_w \ar[r] & \cdots \ar[r] & g_w[1] \ar[r] & 0.
}
\]
In the case where $\mathfrak{g} = \Free_{\Lie} (\matheur{F}[-1])$, $C^\star(\Free_{\Lie}(\matheur{F}[-1]))_w$ has the form
\[
\xymatrix{
	0 \ar[r] & \Sym^{\star, w}(\matheur{F}) \ar[r] & (\Sym^{\star, w-1}(\mathfrak{g}[1]))_w \ar[r] & \cdots \ar[r] & \mathfrak{g}_w[1] \ar[r] & 0.
} \teq \label{eq:Chevalley_weight_w}
\]
\end{rmk} 

\subsubsection{} The following result follows immediately from the discussion in \S\ref{subsubsec:upgrade_to_the_graded_level}.

\begin{thm} \label{thm:compatibility_of_gradings}
	The equivalence given by Proposition~\ref{prop:2_ways_E_X} exchanges the cardinality grading and the Lie grading.
\end{thm}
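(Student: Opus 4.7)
The plan is to promote both constructions of $\Free_{E_X}$ to the level of graded objects and verify that, when one starts with $\matheur{F}$ placed in degree $1$ of $\Shv(X)_{\gr}$, the top row of diagram~\eqref{eq:2_constructions_graded_level} delivers the cardinality grading while the bottom row delivers the Lie grading. Once both gradings are realized as the output of a single commutative diagram of graded functors, the equivalence of Proposition~\ref{prop:2_ways_E_X} will automatically respect them. The conservativity of $\oblv_\gr$ then forces the two gradings on $\Free_{E_X}\matheur{F}$ to coincide.

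First I would verify that each functor appearing in~\eqref{eq:2_constructions_graded_level} admits a canonical lift to the graded setting. For $T$, since $\matheur{F}^{\boxtimes I}|_{\oversetsupscript{X}{\circ}{I}}$ uses $|I|$ tensor factors of $\matheur{F}$, placing $\matheur{F}$ in degree $1$ produces $(T\matheur{F})_n \in \Shv(\Conf_n X)$ in degree $n$; this is the cardinality grading of~\eqref{eq:grading_TF}. The functor $g_!$, being a shriek-pushforward, preserves the direct sum decomposition over components of $\Conf X$, hence acts termwise in the graded category, so the grading on $g_!(T\matheur{F})$ induced along the top row is precisely~\eqref{eq:decomposition_cardinality}. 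For the bottom row, $\Free_{\Lie}\circ[-1]$ gives the standard weight decomposition~\eqref{eq:grading_Free_Lie} with $\matheur{F}[-1]$ in weight $1$; the functors $\delta_*$ and $\oblv^{\ch\to\star}_{\Lie}$ act termwise; and $C^\ch$ is built from symmetric powers $\Sym^{\ch, n}$ that respect the weight grading, yielding the Lie grading on the output.

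Next I would check that the compositions of the right adjoints along the two rows agree as graded functors. The right adjoint composition along either row is $\delta^!$ (resp.\ $\delta^*$), which is simply the restriction to $X$; in the graded setting this restriction acts termwise on a direct sum decomposition, so both rows yield the same graded functor $\Fact(\Ran X)_\gr \to \Shv(X)_\gr$. Passing to left adjoints, the two left-adjoint compositions are equivalent as graded functors, and evaluating on $\matheur{F}$ (placed in degree $1$) gives the desired identification of the cardinality grading with the Lie grading. The only thing to double-check in this step is that the left adjoints $U^\ch$ and $C^\ch$ indeed commute with $\oblv_\gr$, but this is immediate from their description in terms of symmetric/tensor constructions, which preserve the underlying non-graded objects. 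This part is the only mildly delicate point, since all the other functors are manifestly graded by construction, and it is the main place one must invoke the explicit form of the Chevalley complex~\eqref{eq:Chev_complex} to confirm that $C^\ch$ lifts correctly to the graded level.
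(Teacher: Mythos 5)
Your proposal is correct and follows essentially the same route as the paper: both arguments upgrade diagram~\eqref{eq:2_constructions_graded_level} to the graded level, observe that the right-adjoint compositions still agree (hence so do the left-adjoint ones), identify the top and bottom rows with the cardinality and Lie gradings respectively, and use compatibility with $\oblv_\gr$ to conclude that the graded equivalence refines the ungraded one.
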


\section{Consequences} \label{sec:consequences}
We will now list several consequences of the constructions carried out above. These results are phrased completely in classical terms.

\subsection{Homology of $\Conf X$}
Our construction provides a mean to compute homology of configuration spaces.

\begin{prop} \label{prop:coh_of_conf_spaces}
	Let $X$ be a scheme, and $\matheur{F} \in \Shv(X)$. Then, there exists a functorial quasi-isomorphism of chain complexes
	\[
		C^*_c(\Conf X, T\matheur{F}) = \bigoplus_{n\geq 1} C^*_c(\Conf_n X, (T\matheur{F})_n) \simeq C^{\Lie}_*(C^*_c(X, \Free_{\Lie}(\matheur{F}[-1]))),
	\]
	which exchanges the cardinality grading (of configuration) on the left hand side with the Lie grading on the right hand side. Here $C_*^{\Lie}$ denotes the homological Chevalley complex functor.
\end{prop}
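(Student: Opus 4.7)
The plan is to run a short composition of the equivalences already established, factoring the structure map through $\Ran X$ so as to convert homology on $\Conf X$ into chiral homology on $\Ran X$, and then to recognize the resulting object as the Chevalley complex of a Lie algebra living in $\Vect_\Lambda$.

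More precisely, first I would write $\pi_{\Conf X} = \pi_{\Ran X} \circ g$ where $g: \Conf X \to \Ran X$, and use that $\pi_{\Conf X !} \simeq \pi_{\Ran X !} \circ g_!$ to obtain
\[
C^*_c(\Conf X, T\matheur{F}) \simeq C^*_c(\Ran X, g_! T\matheur{F}) = C^*_c(\Ran X, \Free_{E_X}\matheur{F}),
\]
by the very definition $\Free_{E_X} = g_! \circ T$ from Theorem~\ref{thm:Free_E_X}. The direct-sum decomposition on the left comes from~\eqref{eq:grading_TF} together with the continuity of $g_!$ and $\pi_{\Ran X !}$; the $n$-th summand is $C^*_c(\Ran X, (\Free_{E_X}\matheur{F})_n)$, which, since $(\Free_{E_X}\matheur{F})_n = g_!(T\matheur{F})_n$ is supported on $\Conf_n X$, agrees with $C^*_c(\Conf_n X, (T\matheur{F})_n)$.

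Next I would invoke Proposition~\ref{prop:2_ways_E_X} to rewrite
\[
\Free_{E_X}\matheur{F} \simeq \oblv^{\star\to\ch}_{\ComCoAlg} \, C^\star(\delta_*\Free_{\Lie}(\matheur{F}[-1])),
\]
and then apply Corollary~\ref{cor:chiral_homology_Chevalley_complex_Lie(X)} with $\mathfrak{g} = \Free_{\Lie}(\matheur{F}[-1]) \in \Lie(X)$. This yields
\[
C^*_c(\Ran X, \Free_{E_X}\matheur{F}) \simeq C^{\Lie}_*\bigl(C^*_c(X, \Free_{\Lie}(\matheur{F}[-1]))\bigr),
\]
where on the right $C^*_c(X,-)$ is taken in its natural refinement to $\Lie(\Vect_\Lambda)$ as explained in Example~\ref{expl:example_Lie_constant_sheaves}. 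Concatenating the two equivalences gives the desired quasi-isomorphism, and functoriality in $\matheur{F}$ is automatic since every step is functorial.

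Finally, for the grading comparison, I would repeat the entire argument at the graded level, i.e.\ starting with $\matheur{F}$ placed in degree $1$ in $\Shv(X)_\gr$ and running the same chain of equivalences inside the graded versions of the relevant categories as in diagram~\eqref{eq:2_constructions_graded_level}. The left-hand side then carries the cardinality grading by construction of $T$, while the right-hand side carries the Lie grading through $\Free_{\Lie}$ and $C^{\Lie}_*$; the identification of the two under the equivalence is exactly the content of Theorem~\ref{thm:compatibility_of_gradings}, which was precisely designed for this purpose. I do not anticipate a serious obstacle here: all the real work, including the subtle base-change and K\"unneth manipulations needed to set up $g_!$ and to relate $C^\ch$ to $C^\star$ via $U^\ch$, is already packaged into Proposition~\ref{prop:2_ways_E_X}, Corollary~\ref{cor:chiral_homology_Chevalley_complex_Lie(X)}, and Theorem~\ref{thm:compatibility_of_gradings}; the proof is essentially a bookkeeping assembly of these inputs.
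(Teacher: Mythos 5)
Your proposal is correct and follows essentially the same route as the paper: the paper's proof likewise observes that functoriality of pushforward with compact support gives $C^*_c(\Conf X, T\matheur{F}) \simeq C^*_c(\Ran X, \Free_{E_X}\matheur{F})$, and then concludes by citing Proposition~\ref{prop:2_ways_E_X}, Corollary~\ref{cor:chiral_homology_Chevalley_complex_Lie(X)}, and Theorem~\ref{thm:compatibility_of_gradings}. You have merely spelled out the intermediate steps in more detail, which is fine.
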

\begin{proof}
	Due to the functoriality of pushforward with compact support, the construction of $\Free_{E_X}$ implies that
	\[
		C^*_c(\Conf X, T\matheur{F}) \simeq C^*_c(\Ran X, \Free_{E_X} \matheur{F}).
	\]
	The desired equivalence is now a direct consequence of Proposition~\ref{prop:2_ways_E_X}, Theorem~\ref{thm:compatibility_of_gradings} and Corollary~\ref{cor:chiral_homology_Chevalley_complex_Lie(X)}.
\end{proof}

When $\matheur{F} \simeq \omega_X$ is the dualizing sheaf on $X$, where $X$ is smooth, we recover~\cite[Thm. 1.1]{knudsen_betti_2014} using Example~\ref{expl:example_Lie_constant_sheaves} and the formality result~\cite[Sect. 7.1]{knudsen_betti_2014}. Indeed, this is the content of Corollary~\ref{cor:coh_of_conf_spaces_formality_incorporated}. But first, we need some preparation.

\subsubsection{} Let $\mathfrak{g}$ be a dg-Lie algebra. Then
\[
	H_*^{\Lie}(\mathfrak{g}) = H_*(C_*^{\Lie}(\mathfrak{g}))
\]
is computed as the homology of a double complex, and hence could be computed using a spectral sequence (see~\cite[Ex. 2.2]{manetti_formality_2015}). This endows $H_*^{\Lie}(\mathfrak{g})$ with a filtration functorially depending only on $\mathfrak{g}$. 

When $\mathfrak{g}$ is formal, i.e. when there exists a (not necessarily canonical) equivalence 
\[
	\mathfrak{g} \simeq H^*(\mathfrak{g})
\]
of dg-Lie algebras, we have an equivalence (again, not necessarily canonical)
\[
	H^{\Lie}_*(\mathfrak{g}) \simeq H^{\Lie}_*(H^*(\mathfrak{g})). \teq \label{eq:non-canonical_equivalence_Chev_hom}
\]
But by loc. cit., page 2 of the spectral sequence mentioned above is just $H^{\Lie}_*(H^*(\mathfrak{g}))$. The equivalence~\eqref{eq:non-canonical_equivalence_Chev_hom} thus implies that page 2 is equal to page $\infty$, and we obtain the following result.

\begin{lem}
	Let $\mathfrak{g}$ be a formal dg-Lie algebra. Then, there's a natural filtration on $H_*^{\Lie}(\mathfrak{g})$ that gives rise to a natural equivalence
	\[
		\gr_* H_*^{\Lie} (\mathfrak{g}) \simeq H_*^{\Lie}(H^*(\mathfrak{g})),
	\]
	where $H^{\Lie}_*(\mathfrak{g}) \simeq H_*(C^{\Lie}_*(\mathfrak{g}))$.
\end{lem}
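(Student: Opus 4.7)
The plan is to leverage the spectral sequence already referred to above the statement. Viewing $C_*^{\Lie}(\mathfrak{g}) \simeq \bigoplus_n \Sym^n(\mathfrak{g}[1])$ as a double complex with respect to the internal differential of $\mathfrak{g}$ and the Chevalley--Eilenberg differential (which decreases the $\Sym$-degree by one), I filter by $\Sym$-degree to obtain a convergent spectral sequence whose $E_1$ page is $\Sym^*(H^*(\mathfrak{g})[1])$ and whose $E_2$ page is $H_*^{\Lie}(H^*(\mathfrak{g}))$ computed using the bracket induced on $H^*(\mathfrak{g})$. Both the filtration on $H_*^{\Lie}(\mathfrak{g})$ and the identification of its associated graded with $E_\infty$ are manifestly functorial in $\mathfrak{g}$.

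Next I would use formality to force the spectral sequence to degenerate at $E_2$. Any zig-zag of dg-Lie algebra quasi-isomorphisms between $\mathfrak{g}$ and $H^*(\mathfrak{g})$ (with zero differential) induces a quasi-isomorphism of Chevalley complexes and hence an abstract isomorphism $H_*^{\Lie}(\mathfrak{g}) \simeq H_*^{\Lie}(H^*(\mathfrak{g}))$. Since $H^*(\mathfrak{g})$ has zero internal differential, its own spectral sequence collapses trivially, so $H_*^{\Lie}(H^*(\mathfrak{g}))$ agrees dimension-wise with the $E_2$ page for $\mathfrak{g}$. Working over a field of characteristic zero, this dimension match forces every differential $d_r$ for $r \geq 2$ in the spectral sequence for $\mathfrak{g}$ to vanish, so $E_2 = E_\infty$. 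Combining the canonical identifications $E_\infty \simeq \gr_* H_*^{\Lie}(\mathfrak{g})$ and $E_2 \simeq H_*^{\Lie}(H^*(\mathfrak{g}))$ then yields the desired equivalence.

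The main subtlety is naturality: the auxiliary isomorphism $H_*^{\Lie}(\mathfrak{g}) \simeq H_*^{\Lie}(H^*(\mathfrak{g}))$ depends on the choice of formality zig-zag, so one must verify that the resulting equivalence on associated graded does not. This is automatic from the argument, since that equivalence is read off from the intrinsic data of the spectral sequence of $\mathfrak{g}$ (namely the $E_2 = E_\infty$ identification and the functorial $E_2$ page), with formality entering only as the qualitative input ensuring degeneration. Since degeneration is a property rather than additional data, the resulting equivalence $\gr_* H_*^{\Lie}(\mathfrak{g}) \simeq H_*^{\Lie}(H^*(\mathfrak{g}))$ is natural in $\mathfrak{g}$ among formal dg-Lie algebras.
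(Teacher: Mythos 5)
Your proof is correct and follows essentially the same route as the paper: filter the Chevalley double complex by $\Sym$-degree, identify the $E_2$ page with $H_*^{\Lie}(H^*(\mathfrak{g}))$, use the non-canonical isomorphism supplied by formality to force degeneration at $E_2$ by a dimension count, and observe that naturality is inherited from the functoriality of the spectral sequence since formality enters only as a property. You spell out the degeneration and naturality steps in more detail than the paper does, but the argument is the same.
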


\begin{cor} \label{cor:coh_of_conf_spaces_formality_incorporated}
	Let $X$ be a smooth scheme of dimension $d$. Then there exists a natural filtration on $H^*_c(\Conf X, \omega_{\Conf X})$ which gives rise to a natural equivalence
	\[
		\gr_* H^*_c(\Conf X, \omega_{\Conf X}) \simeq H_*^{\Lie} (H^*_c(X, \Qlbar) \otimes \Free_{\Lie} (\Qlbar[2d-1](d)))
	\]
	In general, for any scheme $X$, we have
	\[
		\gr_* H^*_c(\Conf X, \Qlbar) \simeq H^{\Lie}_*(H^*_c(X, \Qlbar) \otimes \Free_{\Lie} (\Qlbar[-1])).
	\]
\end{cor}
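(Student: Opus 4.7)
The plan is to deduce both statements from Proposition~\ref{prop:coh_of_conf_spaces} combined with Example~\ref{expl:example_Lie_constant_sheaves} and the preceding lemma on formal dg-Lie algebras. Concretely, for the general case I would take $\matheur{F} = \Qlbar_X$. Since external tensor products of constant sheaves are constant, $T\Qlbar_X$ is the constant sheaf $\Qlbar_{\Conf X}$, so the left-hand side of Proposition~\ref{prop:coh_of_conf_spaces} becomes $C^*_c(\Conf X, \Qlbar)$. Applying Example~\ref{expl:example_Lie_constant_sheaves} with $V = \Qlbar[-1]$ and the structure map $f \colon X \to \Spec k$, we get an equivalence of dg-Lie algebras
\[
    C^*_c(X, \Free_{\Lie}(\Qlbar_X[-1])) \simeq C^*_c(X, \Qlbar) \otimes \Free_{\Lie}(\Qlbar[-1]).
\]
For the smooth case with $\matheur{F} = \omega_X$ I would first observe that the quotient map $\oversetsupscript{X}{\circ}{n} \to \Conf_n X$ is \etale, so dualizing sheaves pull back and $T\omega_X \simeq \omega_{\Conf X}$; then use the smooth identification $\omega_X \simeq \Qlbar_X[2d](d)$ to rewrite $\omega_X[-1] \simeq \Qlbar_X[2d-1](d)$ and again apply Example~\ref{expl:example_Lie_constant_sheaves} to identify $C^*_c(X, \Free_{\Lie}(\omega_X[-1]))$ with $C^*_c(X, \Qlbar) \otimes \Free_{\Lie}(\Qlbar[2d-1](d))$ as dg-Lie algebras.

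The next step is to invoke formality. Writing $\mathfrak{g}$ for either of the two dg-Lie algebras above, I would use the formality result of~\cite[Sect. 7.1]{knudsen_betti_2014}, valid in characteristic $0$, to conclude that $\mathfrak{g}$ is quasi-isomorphic as a dg-Lie algebra to $H^*(\mathfrak{g}) = H^*_c(X, \Qlbar) \otimes \Free_{\Lie}(V)$, where $V$ is $\Qlbar[2d-1](d)$ or $\Qlbar[-1]$ depending on the case. (Here one uses that $f^*$ is monoidal, so $\Free_{\Lie}(V)$ is inherited from a constant sheaf, and that tensoring with $\Free_{\Lie}(V)$ preserves the formality of $C^*_c(X, \Qlbar)$ as a cdga in the sense needed for the Lie structure described in Example~\ref{expl:example_Lie_constant_sheaves}.)

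With formality in hand, I would apply the lemma immediately preceding the corollary to $\mathfrak{g}$: this produces a natural filtration on $H^{\Lie}_*(\mathfrak{g})$ whose associated graded is $H^{\Lie}_*(H^*(\mathfrak{g}))$. Combined with Proposition~\ref{prop:coh_of_conf_spaces} (after passing to cohomology to turn $C^{\Lie}_*$ into $H^{\Lie}_*$), this gives exactly the two claimed equivalences. The naturality of the filtration comes from the functoriality built into both Proposition~\ref{prop:coh_of_conf_spaces} and the spectral sequence used in the lemma.

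The only real obstacle is the formality input: everything else is routine assembly of earlier results. The cited formality in~\cite[Sect. 7.1]{knudsen_betti_2014} is stated in a topological / characteristic-$0$ algebraic setting, and I would need to check that it transports verbatim to the dg-Lie algebras $C^*_c(X, \Qlbar) \otimes \Free_{\Lie}(V)$ produced here (in particular, making use of~\cite{kriz_operads_????} as in Example~\ref{expl:example_Lie_constant_sheaves} to rigidify $C^*_c(X, \Qlbar)$ to a strictly commutative dg-algebra so that its tensor with a dg-Lie algebra is a dg-Lie algebra in the classical sense to which the formality statement applies).
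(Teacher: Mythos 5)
Your proposal is correct and follows essentially the same route as the paper: apply Proposition~\ref{prop:coh_of_conf_spaces} with $\matheur{F}=\omega_X$ (resp.\ $\Qlbar_X$), use Example~\ref{expl:example_Lie_constant_sheaves} to identify the Lie algebra $C^*_c(X,\Free_{\Lie}(\matheur{F}[-1]))$ with $C^*_c(X,\Qlbar)\otimes\Free_{\Lie}(V)$, and then invoke the formality result of \cite[Sect.~7.1]{knudsen_betti_2014} together with the preceding lemma on formal dg-Lie algebras to pass to the associated graded. The extra details you supply (that $T\omega_X\simeq\omega_{\Conf X}$ via the free \'etale quotient, the shift $\omega_X[-1]\simeq\Qlbar_X[2d-1](d)$, and the caveat about transporting the formality input) are exactly the points the paper leaves implicit.
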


\begin{rmk}
	Note that $\Free_{\Lie} (\Qlbar[-1])$ is a very simple object. Indeed, we have
	\[
		\Free_{\Lie} (\Qlbar[-1]) \simeq \Qlbar[-1]\oplus \Qlbar[-2], \teq\label{eq:free_Lie_simple}
	\]
	where the only non-zero bracket is the isomorphism
	\[
		(\Qlbar[-1])^{\otimes 2} \simeq \Qlbar[-2].
	\]
\end{rmk}

\begin{rmk}
	There is a small difference between this statement and the statement found in~\cite{knudsen_betti_2014}. Namely, the equivalence we give is natural, which is desirable since we want to talk about compatibility with Galois actions.
\end{rmk}

Results in~\cite{knudsen_betti_2014} could now be done internally in the world of algebraic geometry. For example, the proof found in~\cite[Sect. 5.3]{knudsen_betti_2014}, which is an argument about the homology of a Lie algebra (namely, the one on the right hand side of Proposition~\ref{prop:coh_of_conf_spaces}), could now be copied without any modification to yield a proof in our context. As a consequence, we get

\begin{cor} \label{cor:homological_stability}
	For a connected smooth scheme $X$ of dimension $n$, cap product with the unit in $H^0(X, \Lambda)$ induces a map
	\[
		H^*_c(\Conf_{k+1} X, \omega_{\Conf_{k+1} X}) \to H^*_c(\Conf_k X, \omega_{\Conf_k X})
	\]
	that is 
	\begin{enumerate}[\quad --]
		\item an isomorphism, for $*>-k$, and a surjection for $*=-k$, when $X$ is an algebraic curve; and
		\item an isomorphism for $*\geq -k$, and a surjection for $*=-k-1$ in all other cases.
	\end{enumerate}
\end{cor}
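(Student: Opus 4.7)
The plan is to transport the stabilization question across the equivalence of Proposition~\ref{prop:coh_of_conf_spaces} and reduce it to the purely Lie-algebraic computation carried out in~\cite[Sect. 5.3]{knudsen_betti_2014}. First I would specialize Proposition~\ref{prop:coh_of_conf_spaces} to $\matheur{F} = \omega_X$, using the cardinality/Lie grading identification of Theorem~\ref{thm:compatibility_of_gradings} to extract, for each $k$, a functorial quasi-isomorphism between $C^*_c(\Conf_k X, \omega_{\Conf_k X})$ and the weight-$k$ piece of $C^{\Lie}_*\bigl(C^*_c(X, \Free_{\Lie}(\omega_X[-1]))\bigr)$. Since $X$ is smooth of dimension $n$ we have $\omega_X \simeq \Lambda_X[2n](n)$, so Example~\ref{expl:example_Lie_constant_sheaves} together with the projection formula identifies the inner dg-Lie algebra with $H^*_c(X, \Lambda) \otimes \Free_{\Lie}(\Lambda[2n-1](n))$, equipped with the bracket described there. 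By the formality argument of~\cite[Sect. 7.1]{knudsen_betti_2014}, we may replace this by its cohomology as a dg-Lie algebra.

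Next I would identify the stabilization map under this equivalence. The key observation is that cap product with $1 \in H^0(X,\Lambda)$ on the geometric side corresponds, after passage through Proposition~\ref{prop:coh_of_conf_spaces}, to the action on Chevalley homology induced by contraction against a distinguished element of $H^0_c(X,\Lambda)^\vee \otimes \Free_{\Lie}(\Lambda[2n-1](n))_1$ appearing in weight $1$ of the free Lie algebra. This is precisely the operator whose effect on Lie homology weight $k+1 \to k$ is analyzed by Knudsen. Because $\Free_{\Lie}(\Lambda[2n-1](n))$ is concentrated in weights $1$ (and, when $n=1$, also weight $2$, as in~\eqref{eq:free_Lie_simple}), the weight-$k$ piece of the Chevalley complex is a short, explicit complex of the form~\eqref{eq:Chevalley_weight_w}, built out of symmetric and exterior powers of $H^*_c(X,\Lambda)$ appropriately shifted.

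The remaining step is the cohomological range computation: following~\cite[Sect. 5.3]{knudsen_betti_2014} line by line, one examines the relevant piece of the spectral sequence associated with the weight filtration on~\eqref{eq:Chevalley_weight_w} and checks, using connectedness of $X$ (so that $H^0_c$, or rather its Poincar\'e dual, contributes a single generator), the vanishing ranges that yield isomorphism for $* > -k$ (curves) or $* \geq -k$ (higher dimension) and surjection one degree lower. The computations there are entirely internal to a dg-Lie algebra of the exact shape we have just produced, so no modification is needed.

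The main obstacle I anticipate is bookkeeping rather than substance: one must carefully match the geometric cap-product stabilization with the correct operator on the Lie side (including shifts, Tate twists coming from $\omega_X \simeq \Lambda_X[2n](n)$, and the cardinality vs.\ Lie grading matching of Theorem~\ref{thm:compatibility_of_gradings}), and verify that the naturality needed to talk about a single stabilization map (rather than one defined up to non-canonical choice) is preserved by Proposition~\ref{prop:coh_of_conf_spaces}. Once this bookkeeping is done, invoking Knudsen's Lie-algebraic argument closes the proof, and as a bonus the entire construction is visibly Galois-equivariant since every functor in sight is.
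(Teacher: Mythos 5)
Your proposal matches the paper's proof, which consists of exactly this reduction: transport the statement through Proposition~\ref{prop:coh_of_conf_spaces} (with Example~\ref{expl:example_Lie_constant_sheaves} and the formality input, as packaged in Corollary~\ref{cor:coh_of_conf_spaces_formality_incorporated}) and then copy the Lie-homology argument of~\cite[Sect. 5.3]{knudsen_betti_2014} without modification. One minor correction to an aside: since the generator $\Lambda[2n-1](n)$ sits in odd cohomological degree for every $n$, the free Lie algebra $\Free_{\Lie}(\Lambda[2n-1](n))$ has a nontrivial weight-$2$ piece for all $n$, not only for $n=1$; the curve versus higher-dimension dichotomy comes from the degree in which that piece sits, not from its existence.
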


\subsection{Galois action} One merit of our approach is the fact that Galois actions are already part of the output. More precisely, let $X_0$ be a scheme over $k_0$, $\matheur{F}_0 \in \Shv(X_0)$. Moreover, let $X$ and $\matheur{F}$ be the base changes of $X_0$ and $\matheur{F}_0$ to $k = \lbar{k}_0$. Then, $C^*_c(\Conf X, T\matheur{F})$ automatically carries the action of the Galois group $\Gal (k/k_0)$.

Since the equivalence given in Proposition~\ref{prop:coh_of_conf_spaces} is natural, it's compatible with the action of the Galois group on both sides. This implies, for instance, the equivalence in Corollary~\ref{cor:coh_of_conf_spaces_formality_incorporated} and the stabilizing maps in Corollary~\ref{cor:homological_stability} are compatible with the Galois actions. Note that in the case where $k_0 = \Fq$, Corollary~\ref{cor:coh_of_conf_spaces_formality_incorporated} gives a direct way to read off the Frobenius weights of the cohomology of configuration spaces.

\subsection{A simple example} As a simple example, we will now compute the cohomology of configuration spaces of $\mathbb{A}^n$. This same computation has been done in~\cite[Sect. 6.1]{knudsen_betti_2014}; all we are saying is that the same computation also yields information about Frobenius weights. We have
\[
	H^*_c(\Conf \mathbb{A}^n, \Qlbar) \simeq H^{\Lie}_*(H^*_c(\mathbb{A}^n, \Qlbar) \otimes \mathfrak{g})
\]
where $\mathfrak{g}$ is a Lie algebra given in~\eqref{eq:free_Lie_simple}. Note that a priori, this equivalence is only true when the LHS is replaced by the associated graded with respect to some canonical filtration. The equivalence above is still valid, because, as we shall see, all the cohomology groups have dimension 1.

Now
\[
	H^*_c(\mathbb{A}^n, \Qlbar) \simeq \Qlbar[-2n](-n)
\]
with no cup product, and thus, the Lie algebra structure on
\[
	H^*_c(\mathbb{A}^n, \Qlbar) \otimes \mathfrak{g}
\]
is trivial.

This implies that
\[
	H^{\Lie}_*(H^*_c(\mathbb{A}^n, \Qlbar) \otimes \mathfrak{g}) \simeq \Sym (H^*_c(\mathbb{A}^n, \Qlbar) \otimes \mathfrak{g}[1]) \simeq \left(\Qlbar[u] \otimes \bigwedge v\right)_+
\]
with no differential, where  $u$ and $v$ are generators living in
\[
	\Qlbar[-2n](-n) \qquad \text{and} \qquad \Qlbar[-2n-1](-n)
\]
respectively, and the plus sign denotes the augmentation ideal, i.e. we remove the base $\Qlbar$ (note that our $\Sym$ is non-unital). Moreover, in terms of the cardinality gradings, $u$ has degree $1$ and $v$ has degree $2$. Thus, when $d \geq 2$, we have
\[
	H^*_c(\Conf_d \mathbb{A}^n, \Qlbar) \simeq u^{d} \Qlbar \oplus u^{d-2} v \Qlbar.
\]
Namely,
\[
	H^i_c(\Conf_d \mathbb{A}^n, \Qlbar) \simeq
	\begin{cases}
		\Qlbar(-nd) & i = 2nd, \\
		\Qlbar(-n(d-1)) & i = 2n(d-1)+1.
	\end{cases}
\]

\begin{rmk}
	In \cite{knudsen_betti_2014}, there are many worked out examples, which would be transported using the same procedure as above.
\end{rmk}

\appendix
\section{Categorical remarks}
In this appendix, we collect, along with references, several categorical results that are used throughout the paper. 

\subsection{Left Kan extension} \label{subsec:LKE} Let $\matheur{C}, \matheur{C}'$ and $\matheur{D}$ be $\infty$-categories, where $\matheur{D}$ is cocomplete. Let
\[
	\alpha: \matheur{C} \to \matheur{C}'
\]
be a functor. Then, we have the following pair of adjoint functors
\[
	\LKE_\alpha: \Fun(\matheur{C}, \matheur{D}) \to \Fun(\matheur{C}', \matheur{D}): \res_\alpha,
\]
where $\res_\alpha$ is restricting along $\alpha$, and $\LKE_\alpha$ is the left Kan extension along $\alpha$. The latter functor is computed as follows: for any functor
\[
	F: \matheur{C}\to \matheur{D},
\]
and for any $c'\in \matheur{C}'$,
\[
	(\LKE_\alpha F)(c') = \colim_{c\in \matheur{C}_{/c'}} F(c).
\]
Here, $\matheur{C}_{/c'}$ fits into the following Cartesian square of categories
\[
\xymatrix{
	\matheur{C}_{/c'} \ar[d] \ar[r] & \matheur{C} \ar[d]^\alpha \\
	\matheur{C}'_{/c'} \ar[r] & \matheur{C}'
}
\]

\subsection{Limits vs. colimits} \label{subsec:limit_vs_colimit} Let
\[
	F: \matheur{K} \to \DGCatprescont
\]
be a diagram. By the adjoint functor theorem (see~\cite[Corollary 5.5.2.9]{lurie_higher_2012}), there exists a functor
\[
	F^R: \matheur{K}^\op \to \DGCatpres,
\]
where we replace all the arrows by their right adjoints. Denote $\matheur{C}_k = F(k) = F^R(k)$. We have
\[
	\lim_{k\in \matheur{K}^\op} \matheur{C}_k \simeq \colim_{k\in \matheur{K}} \matheur{C}_k \teq \label{eq:limit_vs_colimit}.
\]
It is important to note that the colimit is taken inside $\DGCatprescont$ and the limit is taken in $\DGCatpres$. Note also that the inclusion
\[
	\DGCatprescont\to \DGCatpres
\]
preserves limits but not necessarily colimits (see~\cite[Prop. 5.5.3.13]{lurie_higher_2012}).

For any $k\in \matheur{K}$, if we let
\[
	\ins_k: \matheur{C}_k \to \colim_{k\in \matheur{K}} \matheur{C}_k \qquad \text{and} \qquad \ev_k: \lim_{k\in \matheur{K}^\op} \matheur{C}_k \to \matheur{C}_k
\]
be the obvious functors, then, in the case where $F^R$ also factors through
\[
	F^R: \matheur{K}^\op \to \DGCatprescont,
\]
the equivalence at~\eqref{eq:limit_vs_colimit} can be realized concretely as
\[
	x \simeq \colim_{k \in \matheur{K}} \ins_k \ev_k x. \teq\label{eq:explicit_equivalence_lim_vs_colim}
\]

\subsection{Adjunctions}
\subsubsection{} Let $\matheur{C} \in \DGCatprescont$, and
\[
	F: \matheur{K} \to \DGCatprescont,
\]
and
\[
	\alpha: \colim_{k\in \matheur{K}} F(k) \to \matheur{C}
\]
a functor in $\DGCatprescont$. Let $F^R: \matheur{K}^\op\to \DGCatpres$ be the diagram obtained from $F$ by replacing all functors by their right adjoints. Denote $\alpha_k: F(k) \to \matheur{C}$ the functor induced by $\alpha$, and $\alpha_k^R$ its right adjoint. Then, the $\alpha_k^R$'s determine a functor
\[
	\alpha^R: \matheur{C} \to \lim_{k\in \matheur{K}^\op} F^R(k).
\]
Again, by \S\ref{subsec:limit_vs_colimit}, the limit and colimit appearing above are the same. The following lemma is the content of \S\ref{subsec:limit_vs_colimit}.
\begin{lem} \label{lem:cone_alpha_alphaR}
	We have an adjoint pair $\alpha\dashv \alpha^R$.
\end{lem}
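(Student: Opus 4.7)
The plan is to check the adjunction by computing the two mapping spaces directly, using the explicit presentation of objects in $\colim_{k\in\matheur{K}}F(k)\simeq \lim_{k\in\matheur{K}^\op}F^R(k)$ given at \eqref{eq:explicit_equivalence_lim_vs_colim}. The key inputs are the continuity of $\alpha$ (so that it commutes with the presentation $x\simeq \colim_k \ins_k\ev_k x$), together with the level-wise adjunctions $\alpha_k\dashv \alpha_k^R$ produced by the adjoint functor theorem.

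More concretely, the first step is to observe that, by construction, the $\alpha_k^R$'s are compatible with the transition morphisms in $F^R$ (since $F^R$ is obtained from $F$ by passing to right adjoints and the $\alpha_k$'s are compatible along $F$), so the family indeed assembles to a functor $\alpha^R:\matheur{C}\to \lim_k F^R(k)$. In particular, $\ev_k\circ \alpha^R \simeq \alpha_k^R$ for every $k$. The second step is to take $x$ in $\colim_k F(k)$ and $c\in\matheur{C}$, and to rewrite
\[
    \Map_\matheur{C}(\alpha(x),c) \simeq \Map_\matheur{C}\bigl(\alpha(\colim_k \ins_k\ev_k x),c\bigr) \simeq \lim_k \Map_\matheur{C}(\alpha_k\ev_k x,c) \simeq \lim_k \Map_{F(k)}(\ev_k x,\alpha_k^R c),
\]
where the middle equivalence uses continuity of $\alpha$ (and the identification $\alpha\circ\ins_k\simeq \alpha_k$), and the last equivalence uses the family of adjunctions $\alpha_k\dashv\alpha_k^R$.

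The third step is to compute the other mapping space. Since mapping spaces in a limit of $\infty$-categories are computed term-wise, we have
\[
    \Map_{\lim_k F^R(k)}(x,\alpha^R c) \simeq \lim_k \Map_{F(k)}(\ev_k x,\ev_k \alpha^R c) \simeq \lim_k \Map_{F(k)}(\ev_k x,\alpha_k^R c).
\]
Matching this with the previous display yields the desired natural equivalence $\Map_\matheur{C}(\alpha(x),c)\simeq \Map_{\colim_k F(k)}(x,\alpha^R c)$, hence the adjunction $\alpha\dashv\alpha^R$.

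The main thing to be careful about is naturality: one must verify that the equivalences in the two computations are natural in both $x$ and $c$ and are compatible with the coherent system of transition maps in the diagram $\matheur{K}$, so that the level-wise unit/counit maps assemble into a unit/counit for $\alpha\dashv\alpha^R$. This is essentially bookkeeping once the adjoint functor theorem (\cite[Corollary 5.5.2.9]{lurie_higher_2012}) has been applied to produce the $\alpha_k^R$'s as a coherent family, and is the reason the lemma can be stated as being the content of \S\ref{subsec:limit_vs_colimit}: once one accepts the identification of the colimit in $\DGCatprescont$ with the limit in $\DGCatpres$ as in \eqref{eq:limit_vs_colimit}, the adjunction on the nose follows from the pointwise adjunctions.
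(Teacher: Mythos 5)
Your overall strategy---identifying both mapping spaces with a limit over $\matheur{K}^\op$ of the level-wise mapping spaces---is a reasonable way to flesh out the paper's one-line proof, and your third step (mapping spaces in a limit of $\infty$-categories are computed term-wise) together with the level-wise adjunctions $\alpha_k \dashv \alpha_k^R$ is fine. The weak point is the second step: you invoke the object-wise presentation $x \simeq \colim_k \ins_k \ev_k x$ of \eqref{eq:explicit_equivalence_lim_vs_colim}, but the paper only asserts that formula under the additional hypothesis that $F^R$ factors through $\DGCatprescont$, i.e.\ that the right adjoints of the transition functors are themselves continuous. Lemma~\ref{lem:cone_alpha_alphaR} carries no such hypothesis, so as written your argument establishes the adjunction only in that restricted generality (the likely reason for the paper's hypothesis is that without continuity of the $\ev_k$'s the endofunctor $\colim_k \ins_k \circ \ev_k$ is not obviously continuous, so one cannot identify it with the identity via the universal property of the colimit in $\DGCatprescont$). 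If you want to keep your route, you must either add that hypothesis or justify the formula in the generality you need.

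There is a shorter argument that avoids \eqref{eq:explicit_equivalence_lim_vs_colim} altogether and is presumably what ``the content of \S\ref{subsec:limit_vs_colimit}'' refers to: since $\alpha$ is a continuous functor between presentable categories, the adjoint functor theorem already supplies a right adjoint $\beta$ of $\alpha$; the identification \eqref{eq:limit_vs_colimit} provides $\ins_k \dashv \ev_k$, so passing to right adjoints in the equivalence $\alpha \circ \ins_k \simeq \alpha_k$ yields $\ev_k \circ \beta \simeq \alpha_k^R$ for every $k$; and since a functor into $\lim_{k\in\matheur{K}^\op} F^R(k)$ is determined by its components, $\beta$ coincides with the functor $\alpha^R$ assembled from the $\alpha_k^R$'s. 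Besides being insensitive to the continuity of $F^R$, this also disposes of the naturality bookkeeping you flag at the end: the unit and counit come directly from the adjoint functor theorem rather than having to be assembled coherently from the level-wise ones.
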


\subsubsection{} Let $F, G: \matheur{K} \to \Cat$ be two functors, $\alpha: F\Rightarrow G$ and $\alpha^L: G\Rightarrow F$ two natural transformations, such that for each $k\in \matheur{K}$, $\alpha_k^L \dashv \alpha_k$. By the universal property of limits, $\alpha$ and $\alpha^L$ induce functors 
\[
	\alpha^L: \lim_{k\in \matheur{K}} F(k) \rightleftarrows \lim_{k\in \matheur{K}} G(k): \alpha.
\]

We have the following result from~\cite[Lem. I.1.2.6.4]{gaitsgory_study_????}.

\begin{lem} \label{lem:family_limit_ptwise_adjunction}
	We have an adjoint pair $\alpha^L \dashv \alpha$.
\end{lem}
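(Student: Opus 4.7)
The plan is to exhibit the adjunction at the level of limits by the mapping-space (hom-space) criterion: it suffices to produce, functorially in $x \in \lim_k F(k)$ and $y \in \lim_k G(k)$, an equivalence of mapping spaces
\[
    \Map_{\lim_k G(k)}(\alpha^L x,\, y) \simeq \Map_{\lim_k F(k)}(x,\, \alpha y).
\]
The reason this is the right route is that the functor $\alpha^L$ (respectively $\alpha$) between the limit categories is defined component-wise: an object $x \in \lim_k F(k)$ is the data of objects $x_k \in F(k)$ together with coherent equivalences $F(f)(x_k) \simeq x_{k'}$ for each $f: k \to k'$, and naturality of $\alpha^L$ gives the required coherence on $(\alpha_k^L x_k)_k$ because $\alpha^L$ is a natural transformation.

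First, I would use the standard fact that for a diagram $k \mapsto \matheur{C}_k$ in $\Cat$, mapping spaces in $\lim_k \matheur{C}_k$ are computed as limits: for $x = (x_k)$, $y = (y_k)$ one has
\[
    \Map_{\lim_k \matheur{C}_k}(x, y) \simeq \lim_{k \in \matheur{K}} \Map_{\matheur{C}_k}(x_k, y_k).
\]
Applying this to both sides of the desired equivalence, the problem reduces to producing a natural equivalence of the limit diagrams $k \mapsto \Map_{G(k)}(\alpha_k^L x_k,\, y_k)$ and $k \mapsto \Map_{F(k)}(x_k,\, \alpha_k y_k)$. The pointwise adjunction $\alpha_k^L \dashv \alpha_k$ supplies a natural equivalence
\[
    \Map_{G(k)}(\alpha_k^L x_k,\, y_k) \simeq \Map_{F(k)}(x_k,\, \alpha_k y_k)
\]
for each $k$, so passing to the limit over $k$ gives the desired equivalence.

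The genuine content, and the main obstacle, is verifying that these pointwise equivalences assemble into an equivalence of \emph{diagrams} over $\matheur{K}$, not merely an equivalence at each object. Concretely, for $f: k \to k'$ in $\matheur{K}$ one must check that the square comparing the two sides via $G(f)$ and $F(f)$ commutes coherently; this follows from the naturality of the unit and counit of the pointwise adjunctions together with the fact that $\alpha$ and $\alpha^L$ are natural transformations, which provides the Beck–Chevalley-type commutativities $\alpha_{k'} \circ G(f) \simeq F(f) \circ \alpha_k$ and $\alpha_{k'}^L \circ G(f) \simeq F(f) \circ \alpha_k^L$ up to coherent homotopy.

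An alternative (equivalent) route, which bypasses manipulating mapping spaces directly, is to build unit and counit transformations $\eta: \id \Rightarrow \alpha \circ \alpha^L$ and $\epsilon: \alpha^L \circ \alpha \Rightarrow \id$ on the limit categories by first assembling the pointwise units $\eta_k$ and counits $\epsilon_k$ into modifications between natural transformations of $\matheur{K}$-diagrams (again using naturality of $\alpha$ and $\alpha^L$), and then inducing them on limits; the triangle identities on the limit then follow from their pointwise validity. In either formulation, the technical heart is the homotopy-coherent assembly of the pointwise data, for which one appeals to the cited reference \cite[Lem.~I.1.2.6.4]{gaitsgory_study_????}.
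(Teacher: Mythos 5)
The paper does not actually prove this lemma: it is imported verbatim from the cited reference \cite[Lem. I.1.2.6.4]{gaitsgory_study_????}, so there is no internal proof to compare against. Your sketch is the standard argument for that statement and is correct in outline: mapping spaces in a limit of $\infty$-categories are computed as limits of mapping spaces, the objectwise adjunctions give objectwise equivalences, and the entire content is the homotopy-coherent assembly of these equivalences into an equivalence of $\matheur{K}$-indexed diagrams of spaces. You isolate exactly the right technical point --- the compatibility of the objectwise adjunction data with the transition functors, i.e.\ the commutation of the mate (Beck--Chevalley) squares, which here is encoded in $\alpha^L$ being a natural transformation rather than a mere objectwise family --- and this is precisely what the Gaitsgory--Rozenblyum lemma packages. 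Since you defer to that same reference at the decisive step, your proof has the same logical footprint as the paper's one-line citation; the surrounding reductions you supply are all valid. One bookkeeping remark: with $\alpha\colon F\Rightarrow G$ one has $\alpha_k\colon F(k)\to G(k)$, so the functor induced on limits goes from $\lim_k F(k)$ to $\lim_k G(k)$ and its left adjoint the other way; the hom-space identity you display therefore has its two sides transposed relative to the stated directions of the natural transformations. The paper's own display of the adjoint pair carries the same transposition, so this is inherited rather than introduced, but it is worth fixing if you write the argument out in full.
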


\bibliography{ConfX} 
\end{document}